\newtheorem{theorem}{Theorem}[section]
\newtheorem{proposition}[theorem]{Proposition}
\theoremstyle{definition}
\theoremstyle{plain}
\newtheorem{lemma}[theorem]{Lemma}
\newtheorem{corollary}{Corollary}[theorem]
\theoremstyle{remark}
\newtheorem{remark}[theorem]{Remark}
\def\namedlabel#1#2{\begingroup
    #2%
    \def\@currentlabel{#2}%
    \phantomsection\label{#1}\endgroup
}
\newcommand{\e}{\mathrm{e}}
\newcommand{\vphi}{\varphi}
\DeclareMathOperator*{\supp}{supp}
\DeclareMathOperator*{\dist}{dist}
\DeclareMathOperator*{\sgn}{sgn}
\newcommand{\intdm}[3]{\displaystyle \int_{#1} #2 \, \mathrm{d}#3}
\newcommand{\iintdm}[5]{\int_{#1}  \int_{#2} #3 \, \mathrm{d}#4 \, \mathrm{d}#5}
\newcommand{\iiintdm}[7]{\int_{#1} \int_{#2} \int_{#3}  #4 \, \mathrm{d}#5 \, \mathrm{d}#6 \, \mathrm{d}#7}
\newcommand{\intdmt}[4]{\displaystyle \int_{#1}^{#2} #3 \, \mathrm{d}#4}
\newcommand{\diffqbu}{ (\bu(\bx)-\bu(\by)) \cdot \frac{\bx-\by}{|\bx-\by|}}
\newcommand{\bfs}[1]{\boldsymbol{#1}}
\newcommand{\bdx}{\mathbf{x}}
\newcommand{\bdz}{{\bf z}}
\newcommand{\bdy}{{\bf y}}
\newcommand{\bfxi}{\boldsymbol{\xi}}
\title[A Fractional Korn-type Inequality with Applications] 
{A fractional Korn-type inequality}
\author{James Scott}
\address{Department of Mathematics,
University of Tennessee Knoxville, TN}
\email{jscott66@vols.utk.edu}
\author{Tadele Mengesha}
\address{
Department of Mathematics,
University of Tennessee Knoxville, TN}
\thanks{
This research is supported by the U.S. National Science Foundation grant DMS-1615726. 
}
\email{mengesha@utk.edu}
\subjclass[2010]{46E35,   46E40,  45G15,  35B65,  74B99}
\keywords{Fractional Korn's inequality, Fractional Sobolev spaces, Poisson-type integral, coupled nonlocal equations, self-improving property}
\begin{document}
\maketitle

\begin{abstract}
We show that a class of spaces of vector fields whose semi-norms involve the magnitude of ``directional" difference quotients is in fact equivalent to the class of fractional Sobolev spaces. 
The equivalence can be considered a Korn-type characterization of fractional Sobolev spaces. We use the result to understand better the energy space associated to a strongly coupled system of nonlocal equations related to a nonlocal continuum model via  peridynamics. Moreover, the equivalence permits us to apply classical space embeddings in proving that weak solutions to the nonlocal system enjoy both improved differentiability and improved integrability.
\end{abstract}

\section{Introduction and statement of main results}

The main focus of this paper is to study the function space of vector fields given by 
\[
 \mathcal{X}_{K, p}(\Omega) =\left\{{\bf v}\in L^{p}(\Omega; \mathbb{R}^{d})\, : \, [{\bf v} ]^{p}_{\mathcal{X}_{K,p}} := \int_{\Omega } \int_{\Omega } K(\bdy - \bdx)\left|\frac{({\bf v}(\bdy) - {\bf v} ({\bdx}))}{|\bdy-\bdx|} \cdot \frac{(\bdy -\bdx)}{|\bdy - \bdx|}\right|^{p} \, \mathrm{d}\bdy \, \mathrm{d}\bdx < \infty\right\}
\]
where $\Omega\subset \mathbb{R}^{d}$ is an open subset and the kernel $K(\bdz)$ is a nonnegative function with appropriate integrability. For a particular class of kernels, our main result states that $ \mathcal{X}_{K, p}(\Omega)$ is equivalent to a Sobolev space. We use this identification and classical embedding estimates to obtain Sobolev regularity for solutions to a  strongly coupled system of nonlocal equations with elliptic measurable coefficients. 

For $p = 2,$ the space $ \mathcal{X}_{K, 2}(\Omega)$ has been used in  nonlocal continuum mechanics \cite{Silling2000, Silling2007, Silling2010} where it appears as the energy space corresponding to the peridynamic  strain energy in small strain linear models.  Some basic structural  properties of $ \mathcal{X}_{K, p}(\Omega)$ have already been investigated in \cite{Mengesha-DuElasticity, Mengesha-Du-non, Du-ZhouM2AN}. There it is shown that for any $1\leq p < \infty$, the  space $ \mathcal{X}_{K, p}(\Omega)$ is a separable Banach space with norm $\left(\|{\bf v}\|_{L^{p}}^{p} +  [{\bf v}]^{p}_{\mathcal{X}_{K,p}}\right)^{1/p}$, reflexive if $1 < p < \infty$,  and is a Hilbert space when $p =2$.   Conditions on the kernel $K$ can be imposed so that a Poincar\'e-Korn type inequality holds over subsets that contain no nontrivial zeros of the semi-norm $[\cdot]_{\mathcal{X}_{\rho,p}}$. It is not difficult to see that $[{\bf v}]_{\mathcal{X}_{K,p}} =0$ if and only if ${\bf v}$ is an affine map with skew-symmetric gradient.  These functional analytic properties of the space were used to demonstrate well posednesss of some nonlocal variational problems using the direct method of the calculus of variations, see \cite{Mengesha-Du-non} for more.

As a difference-based function space, it may seem that $ \mathcal{X}_{K, p}(\Omega)$ contains functions with some ``differentiability." This is not in general true, however. Taking a radial $K$ that is compactly supported and with the property that ${K (\bdx)\over |\bdx|^{p}}$ is integrable,  it is shown in \cite{Mengesha-Du-non} that $ \mathcal{X}_{K, p}(\Omega) = L^{p}(\Omega;\mathbb{R}^{d})$. In the event that the space $\mathcal{X}_{K, p}(\Omega)$ is a proper subset of $L^{p}(\Omega;\mathbb{R}^{d})$, the fact that the semi-norm utilizes the ``directional" or ``projected" difference quotient $
 \frac{{\bf v}(\bdy) - {\bf v} ({\bdx})}{|\bdy-\bdx|} \cdot \frac{(\bdy -\bdx)}{|\bdy - \bdx|}$ 
appears to make the space relatively big compared to those that use the full difference quotient. 
 Nevertheless, by averaging the projected difference quotient over enough directions, it is reasonable to think that the semi-norm generated will be comparable with the one that is associated with  the full difference quotient. {\em However, this remains unclear in general}. Finding general conditions  on $K$ and $\Omega$ so that equivalence holds is an open problem, and here we restrict our discussion on the special class of kernels 
\[
K(|{\bfs \xi}|) = {1\over |{\bfs\xi}|^{d + ps-p}}\,, \quad 0 < s< 1\,, \quad 1<p <\infty\,. 
\] 
We denote the corresponding space by $\mathcal{X}_{p}^{s}(\Omega)$. 
These kernels are associated with the fractional Sobolev spaces  $W^{s, p}(\Omega;\mathbb{R}^{d})$ via the Gagliardo semi-norm, where $W^{s, p}(\Omega;\mathbb{R}^{d})$ is given by 
\[
W^{s, p}(\Omega;\mathbb{R}^{d}) := \left\{{\bf v}\in L^{p}(\Omega;\mathbb{R}^{d}): [{\bf v}]_{W^{s,p}}^{p} :=  \int_{\Omega } \int_{\Omega } \frac{|{\bf v}(\bdy) - {\bf v} ({\bdx})|^{p}}{|\bdy-\bdx|^{d+sp}} \, \mathrm{d}\bdy \, \mathrm{d}\bdx < \infty\right\}\,.
\]
{\em The question is now if $\mathcal{X}_{p}^{s}(\Omega)$ is the same as $W^{s, p}(\Omega;\mathbb{R}^{d})$ for these fractional kernels. }

In a recent work \cite{Mengesha-Hardy}, the second author answers the above question in the affirmative for the special case $p=2$, and $\Omega= \mathbb{R}^{d}$ or $\mathbb{R}^{d}_{+}$.  When $p=2$, and $\Omega=\mathbb{R}^{d}$, the question is tractable because both spaces $\mathcal{X}_{2}^{s}(\mathbb{R}^{d})$, and $W^{s, 2}(\mathbb{R}^{d};\mathbb{R}^{d})$ can be characterized by Fourier symbols which made the camparison of norms more straightforward; see \cite{Du-ZhouM2AN}.  For functions defined over the half-space $\mathbb{R}^{d}_{+}$ and vanishing near the hyperplane $x_d = 0$, one can use an appropriate extension operator to control  the semi-norm $[\cdot]_{W^{s,2}(\mathbb{R}^{d}_{+})} $ by the semi-norm $[\cdot]_{\mathcal{X}_{2}^{s}(\mathbb{R}^{d}_{+})}$ of vector fields in the dense class $C_c^{1}(\mathbb{R}^{d}_{+};\mathbb{R}^{d})$.  
 In this paper we extend these results to any $p \in (1,\infty)$ again providing an answer to the question of equivalence of spaces in the affirmative.  Let us introduce the function space 
\begin{equation}\label{zero-bdry-nonlocal-space}
\mathring{\mathcal{X}}^{s}_{p}(\Omega) = \text{Closure of  ${C_{c}^{\infty}(\Omega; \mathbb{R}^{d}) }$ in ${\mathcal{X}}^{s}_{p}(\Omega)$}.  
\end{equation}

 \begin{theorem}[{\bf Fractional Korn's inequality}]\label{theorem-korn}
 For any $s\in (0, 1)$ and $1 < p < \infty$, 
 \[
 \mathcal{X}^{s}_{p}(\mathbb{R}^{d}) = W^{s, p}(\mathbb{R}^{d};\mathbb{R}^{d})\,, \quad \text{and}\quad \mathring{\mathcal{X}}^{s}_{p}(\mathbb{R}^{d}_{+}) = W^{s, p}_{0}(\bbR^{d}_{+};\mathbb{R}^{d})
  \]
  Moreover, there exists a universal constant $C = C(d, p, s)$ such that for all ${\bf f}\in W^{s, p}(\mathbb{R}^{d};\mathbb{R}^{d})$
 \begin{equation}\label{frackorn}
[{\bf f}]_{\mathcal{X}^{s}_{p}} \leq  [{\bf f}]_{W^{s,p}} \leq C [{\bf f}]_{\mathcal{X}^{s}_{p}}\,.
\end{equation}
 \end{theorem}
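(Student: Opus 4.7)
The inequality $[\mathbf{f}]_{\mathcal{X}^s_p}\leq[\mathbf{f}]_{W^{s,p}}$ in \eqref{frackorn} is immediate from the pointwise Cauchy--Schwarz bound $\bigl|(\mathbf{f}(\mathbf{y})-\mathbf{f}(\mathbf{x}))\cdot(\mathbf{y}-\mathbf{x})/|\mathbf{y}-\mathbf{x}|\bigr|\leq|\mathbf{f}(\mathbf{y})-\mathbf{f}(\mathbf{x})|$, raised to the $p$-th power and integrated against the shared kernel $|\mathbf{y}-\mathbf{x}|^{-(d+sp)}$. All of the content therefore lies in the reverse inequality $[\mathbf{f}]_{W^{s,p}}\leq C[\mathbf{f}]_{\mathcal{X}^s_p}$ on $\mathbb{R}^d$; once this is in hand, the first space identification follows by a density argument and the half-space identification by an extension-by-zero argument.

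For the reverse bound, I would first reduce to $\mathbf{f}\in\mathcal{S}(\mathbb{R}^d;\mathbb{R}^d)$ by density, then pass to polar coordinates via $\mathbf{h}=\mathbf{y}-\mathbf{x}=r\omega$ and slice $\mathbb{R}^d$ into lines parallel to $\omega$ to produce
\[
[\mathbf{f}]_{\mathcal{X}^s_p}^p=\tfrac{1}{2}\int_{S^{d-1}}\int_{\omega^\perp}\bigl[G_{\omega,\mathbf{x}'}\bigr]_{W^{s,p}(\mathbb{R})}^p\,d\mathcal{H}^{d-1}(\mathbf{x}')\,d\omega,\qquad G_{\omega,\mathbf{x}'}(t):=\mathbf{f}(\mathbf{x}'+t\omega)\cdot\omega,
\]
and an analogous identity for $[\mathbf{f}]_{W^{s,p}}^p$ in which $G_{\omega,\mathbf{x}'}$ is replaced by the $\mathbb{R}^d$-valued line restriction $\mathbf{F}_{\omega,\mathbf{x}'}(t):=\mathbf{f}(\mathbf{x}'+t\omega)$. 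The problem thereby reduces to showing that the spherical average of the 1D vector-valued Gagliardo semi-norms of $\mathbf{F}_{\omega,\mathbf{x}'}$ is controlled by the spherical average of the 1D scalar Gagliardo semi-norms of the projections $G_{\omega,\mathbf{x}'}=\mathbf{F}_{\omega,\mathbf{x}'}\cdot\omega$.

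For $p=2$, Plancherel converts this to the Fourier-side estimate $|\xi|^{2s}|\widehat{\mathbf{f}}(\xi)|^2\leq C\int_{S^{d-1}}|\xi\cdot\omega|^{2s}|\widehat{\mathbf{f}}(\xi)\cdot\omega|^2\,d\omega$ for a.e.\ $\xi$, which by homogeneity reduces to the fibre bound $1\leq C\int_{S^{d-1}}|\xi\cdot\omega|^{2s}|\mathbf{w}\cdot\omega|^2\,d\omega$ uniformly over the compact set $(\xi,\mathbf{w})\in S^{d-1}\times S^{d-1}$; the integrand is continuous, nonnegative, and strictly positive on a set of positive measure, so a compactness argument delivers the required uniform lower bound. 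For $p\neq 2$ Plancherel is unavailable, so the argument must proceed via singular-integral / multiplier theory. My plan is to combine the scalar identity $(-\Delta)^{s/2}=c_{d,s}\int_{S^{d-1}}(-\partial_\omega^2)^{s/2}\,d\omega$ with the standard line-by-line equivalence $\|(-\partial_\omega^2)^{s/2}g\|_{L^p(\mathbb{R}^d)}\sim[g]_{W^{s,p}_\omega}$, and to realise each Cartesian component $(-\Delta)^{s/2}v_j$ as an $L^p$-bounded operator applied to the family $\{(-\partial_\omega^2)^{s/2}(\mathbf{f}\cdot\omega)\}_{\omega}$ via a vector-valued Mihlin / Calder\'on--Zygmund multiplier bound. \textbf{The main obstacle} is precisely this last step: the symbol $|\xi|^s/|\xi\cdot\omega|^s$ is singular on the codimension-one set $\{\xi\perp\omega\}$ for each fixed $\omega$, so the averaging over $\omega$ must be arranged so that the resulting multiplier is of Mihlin class and defines a bounded singular integral on $L^p$.

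Finally, for the half-space identification $\mathring{\mathcal{X}}^s_p(\mathbb{R}^d_+)=W^{s,p}_0(\mathbb{R}^d_+;\mathbb{R}^d)$, since both spaces are defined as closures of $C_c^\infty(\mathbb{R}^d_+;\mathbb{R}^d)$ in the respective norms it suffices to establish the reverse inequality on this dense class. I extend each $\mathbf{f}\in C_c^\infty(\mathbb{R}^d_+;\mathbb{R}^d)$ by zero to $\tilde{\mathbf{f}}\in C_c^\infty(\mathbb{R}^d;\mathbb{R}^d)$ and apply the $\mathbb{R}^d$ inequality to $\tilde{\mathbf{f}}$; the additional contribution
\[
\int_{\mathbb{R}^d_+}\int_{\mathbb{R}^d_-}\frac{|\mathbf{f}(\mathbf{y})\cdot(\mathbf{y}-\mathbf{x})/|\mathbf{y}-\mathbf{x}||^p}{|\mathbf{y}-\mathbf{x}|^{d+sp}}\,d\mathbf{x}\,d\mathbf{y}
\]
appearing in $[\tilde{\mathbf{f}}]_{\mathcal{X}^s_p(\mathbb{R}^d)}^p-[\mathbf{f}]_{\mathcal{X}^s_p(\mathbb{R}^d_+)}^p$ is controlled by a projected fractional Hardy inequality (obtained via the same 1D slicing as above applied to the pointwise bound $|\mathbf{f}(\mathbf{y})\cdot\omega|\leq|\mathbf{f}(\mathbf{y})|$), so that $[\tilde{\mathbf{f}}]_{\mathcal{X}^s_p(\mathbb{R}^d)}\leq C[\mathbf{f}]_{\mathcal{X}^s_p(\mathbb{R}^d_+)}$; passing to the closure then yields the stated identity of spaces.
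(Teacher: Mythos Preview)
Your proposal has two genuine gaps.

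\textbf{The $p\neq 2$ case on $\mathbb{R}^d$ is not actually proved.} You yourself flag the obstacle: for each fixed direction $\omega$ the symbol $|\xi|^{s}/|\xi\cdot\omega|^{s}$ blows up on the entire hyperplane $\{\xi\perp\omega\}$, so it is not a Mihlin multiplier, and you offer no mechanism by which the spherical average produces one. This is exactly the difficulty the paper is designed to circumvent. The paper takes a completely different route: it introduces a matrix ``Poisson-type'' kernel $\mathbb{P}_t(\bx)$ (built from $(\bx,t)\otimes(\bx,t)/(|\bx|^2+t^2)^{(d+3)/2}$) and defines $\bU=\mathbb{P}_t\ast(\bff,0)$. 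The point is that $\partial_t\bU$ is an integral of $\partial_t\mathbb{P}_t$ against $\bF(\bx-\by)-\bF(\bx)$, and the algebraic structure of $\mathbb{P}_t$ forces this integrand to depend on $\bff$ only through the projected difference $(\bff(\bx+\by)-\bff(\bx))\cdot\by/|\by|$. Hardy's inequalities then give $\int_0^\infty t^{p(1-s)}\|\partial_t\bU\|_{L^p}^p\,dt/t\leq C[\bff]_{\mathcal{X}^s_p}^p$. Separately, explicit Fourier computation shows each component of $\bU$ differs from the ordinary Poisson integral $\bu=p_t\ast\bff$ by a Riesz transform of another component of $\bU$, whence $\|\partial_t\bu(\cdot,t)\|_{L^p}\leq C\|\partial_t\bU(\cdot,t)\|_{L^p}$ for every $t$ by $L^p$-boundedness of Riesz transforms. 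Combined with the classical characterisation $[\bff]_{W^{s,p}}^p\approx\int_0^\infty t^{p(1-s)}\|\partial_t\bu\|_{L^p}^p\,dt/t$, this closes the chain. No directional multiplier or averaging-over-$\omega$ argument is needed; the Riesz transforms enter only in the benign form of a single bounded operator applied componentwise.

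\textbf{The half-space step via zero extension is circular as written.} To bound $[\tilde\bff]_{\mathcal{X}^s_p(\mathbb{R}^d)}$ by $[\bff]_{\mathcal{X}^s_p(\mathbb{R}^d_+)}$ you must control the cross term by the \emph{projected} seminorm on the half-space. Your suggested route uses $|\bff(\by)\cdot\omega|\leq|\bff(\by)|$ to reduce to an ordinary fractional Hardy inequality, but that is bounded by $[\bff]_{W^{s,p}(\mathbb{R}^d_+)}$, and passing from there back to $[\bff]_{\mathcal{X}^s_p(\mathbb{R}^d_+)}$ is precisely the half-space Korn inequality you are proving. The paper explicitly notes that zero extension ``is also not appropriate, since it is not clear how to control the norm of the extended function,'' and instead invokes a Nitsche-type extension operator $E:\mathring{\mathcal{X}}^s_p(\mathbb{R}^d_+)\to\mathcal{X}^s_p(\mathbb{R}^d)$ whose boundedness in the $\mathcal{X}$-seminorm is established separately (via a projected fractional Hardy inequality proved directly for the $\mathcal{X}$-seminorm, not deduced from the Sobolev one).
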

While the first inequality in \eqref{frackorn} is trivial, the second inequality is the interesting one, as it gives a control of the integral norm of a pointwise larger function by the integral norm of a pointwise smaller function. We call the second inequality a {\em fractional Korn's inequality} for the following reason.  For a smooth vector field ${\bf f}$, the semi-norm $[{\bf f}]_{W^{s,p}}$ uses the full difference quotient which locally behaves as 
 \[
\left|{{\bf f}(\bdy) - {\bf f}(\bdx)\over |\bdy -\bdx|}\right|^{p}\approx \left|\nabla {\bf f}(\bdx) {\bdy - \bdx\over |\bdy - \bdx|}\right|^{p} + O(|\bdy-\bdx|)
\]
while the semi-norm $[{\bf f}]_{\mathcal{X}^{s}_{p}}$ uses the projected difference quotient and locally behaves as 
\[
\left|\frac{{\bf f}(\bdy) - {\bf f} ({\bdx})}{|\bdy-\bdx|} \cdot \frac{(\bdy -\bdx)}{|\bdy - \bdx|}\right|^{p}\approx  \left|\left\langle(\nabla {\bf f}(\bdx))_{sym}  {\bdy - \bdx\over |\bdy - \bdx|},{\bdy - \bdx\over |\bdy - \bdx|}\right\rangle\right|^{p} + O(|\bdy-\bdx|)
\]
where $(\nabla {\bf f}(\bdx))_{sym} = {1\over 2}(\nabla {\bf f}(\bdx)^{T} + \nabla {\bf f}(\bdx))$ is the symmetric part of the gradient matrix. 
The connection between the projected difference quotient and $\grad_{sym}$ runs deeper; multiplying the semi-norm by the proper correcting constant $(1-s)$ it has been shown in \cite{Mengesha}, following the argument in \cite{BBM}, that the space $\mathcal{X}^{s}_{p}(\Omega)$ ``converges" to 
\[
W^{1,p}_{Sym}(\Omega;\mathbb{R}^{d}) := \{ {\bf u}\in L^{p}(\Omega;\mathbb{R}^{d}) : (\nabla {\bf u})_{sym} \in L^{p}(\Omega;\mathbb{R}^{d\times d}) \}
\] as $s\to 1^{-}$. 
This association suggests that $\mathcal{X}_{p}^{s}(\Omega)$ is the fractional analogue  of  $W^{1,p}_{Sym}(\Omega;\mathbb{R}^{d})$. In turn, $W^{1,p}_{sym}(\Omega;\bbR^d)$ is known to coincide with $W^{1, p}(\Omega;\mathbb{R}^{d})$ via the classical Korn's inequality, a fundamental tool in the theory of linearized elasticity; see \cite{Demengel} for a complete proof.  As such, establishing $\mathcal{X}^{s}_{ p}(\Omega) = \mathcal{W}^{s, p}(\Omega;\mathbb{R}^{d})$ in the affirmative amounts to proving a version of Korn's inequality for fractional Sobolev spaces.  

Our proof of Theorem \ref{theorem-korn} makes use of the classical characterization of functions in the fractional spaces in terms of their  Poisson integrals. Given a vector function ${\bf f}$, its Poisson integral is defined as ${\bf u}(\bdx, t) = p_{t}\ast {\bf f}(\bdx)$, where for each $t > 0, $ the function $p_{t}(\bdx)$ is the standard Poisson kernel. 
For $s\in (0,1)$, $1<p <\infty$ it is well-known \cite[Proposition 7', Chapter V]{Stein} that ${\bf f}\in W^{s, p}(\mathbb{R}^{d};\mathbb{R}^{d})$ if and only if 
\[
\int_{0}^{\infty}\left(t^{1-s} \|\partial_{t} {\bf u}(\cdot, t)\|_{L^{p}} \right)^{p} \, {\mathrm{d}t \over t}  < \infty\,.  
\]
Moreover, the semi-norm $[{\bf f}]_{W^{s,p}}$ is equivalent with $\left(\int_{0}^{\infty}\left(t^{1-s} \|\partial_{t} {\bf u}(\cdot, t)\|_{L^{p}} \right)^{p}\, {\mathrm{d}t \over t}\right)^{1/p}$. To prove Theorem \ref{theorem-korn}, we compare the latter semi-norm with that of $[{\bf f}]_{\mathcal{X}^{s}_{p}}$. The key idea is the introduction of a ``Poisson-type" integral ${\bf U}(\bdx, t)$ of a vector field ${\bf f}$. We construct ${\bf U}$ using a convolution with a modified ``Poisson-type" {\em matrix} kernel 
whose components are some linear combination of convolutions of components of the vector field ${\bf f}$. The structure of the Poisson-type kernel reveals that each component of ${\bf U}$ is related with components of the Poisson integral ${\bf u}$ via Riesz transforms leading to the norm relation  
\[
\|\partial_{t} {\bf u}(\cdot, t)\|_{L^{p}} \leq \|\partial_{t} {\bf U}(\cdot, t)\|_{L^{p}} \text{ for all $t> 0$}\,, \quand \int_{0}^{\infty}\left(t^{1-s} \|\partial_{t} {\bf U}(\cdot, t)\|_{L^{p}} \right)^{p} {dt \over t} \leq C(d, p, s) [{\bf f}]^{p}_{\mathcal{X}^{s}_{p}}\,. 
\]  
Combining these inequalities with the characterization of $W^{s,p}(\bbR^d;\bbR^d)$ in terms of Poisson integrals we obtain the equivalence of spaces.   Interestingly, this approach also leads to a characterization of the whole Besov scale  $\Lambda_{p, q}^{s}$ in terms of the newly defined Poisson-type integrals.  These and other related results will be reported elsewhere.

As an application of Theorem \ref{theorem-korn} we show improved Sobolev regularity of weak solutions to the coupled system of nonlocal equations
formally given as 
\begin{equation}\label{MAINEQN}
\mathbb{L}^{s}_{p, \Omega}({\bf u}) := \intdm{\Omega}{ \frac{A(\bx, \by)}{|\bx-\by|^{d+2s}}  |\cD(\bu)(\bx,\by)|^{p-2} {{(\bdx-\bdy) \over |\bdy-\bdx|}\otimes{(\bdx-\bdy)\over |\bdx-\bdy|}} \left({\bf u}(\bdx) - {\bf u}(\bdy)\right)}{\bdy} = {\bf F}(\bf x)\,. 
\end{equation}
In the above $\Omega$ is a bounded subset of $\mathbb{R}^{d}$ for $ d\geq 2$, the functions $\bF$,  ${\bf u} : \mathbb{R}^{d}\to \bbR^{d}$, and  the quantity $\cD(\bu)$ is given by 
\[\cD(\bu)(\bx,\by) := \diffqbu\,. \]
 We also assume that $s\in (0, 1)$, $p\geq 2$, and that $A(\bdx, \bdy)$ is a measurable function such that $\alpha_1\leq A(\bdx, \bdy)\leq \alpha_2$ and symmetric in the sense that $A(\bdx, \bdy) = A(\bdy, \bdx)$ for any $\bdx$, $\bdy\in \mathbb{R}^{d}$.   
Properly speaking, for a given vector field $\bu\in \mathcal{X}^{s}_{p}(\Omega)$, the operator $\bbL^s_{p,\Omega}(\bu)$ is a vector of distributions acting on test functions $\vphi \in C^{\infty}_0(\bbR^d ; \bbR^d)$  via 
\begin{equation}\label{defn-of-operator}
\langle\bbL^s_{p,\Omega} (\bu), \vphi\rangle = \intdm{\Omega}{\intdm{\Omega}{|\cD(\bu)(\bx,\by)|^{p-2} \, \cD(\bu)(\bx,\by) \cdot \cD(\vphi)(\bx,\by) \frac{A(\bx,\by)}{|\bx-\by|^{d+sp}}}{\by}}{\bx}\,.
\end{equation}
Let $\bF \in [\mathcal{X}^{s}_{p}(\Omega)]^{\ast}$, the dual space of $\mathcal{X}^{s}_{p}(\Omega)$, be given. We say $ \bu\in \mathcal{X}^{s}_{p}(\Omega)$ is a {\em weak solution} to the nonlocal system \eqref{MAINEQN} if for all $\vphi \in C^{\infty}_0(\bbR^d ; \bbR^d)$, 
\[
\langle\bbL^s_{p,\Omega} (\bu), \vphi\rangle =\langle \bF, \vphi\rangle
\]
where $\langle\cdot,\cdot \rangle$ is the duality pairing between $[\mathcal{X}^{s}_{p}(\Omega)]^{\ast}$ and $\mathcal{X}^{s}_{p}(\Omega)$.  

For $p=2$, the system of equations given in \eqref{MAINEQN} is closely related to a nonlocal linearized continuum materials model via peridynamics \cite{Silling2000, Silling2007, Silling2010}.   In this case, the leading operator in the system is made up of weighted averages of some linear combinations of vectors of difference quotients. See \cite{Du-ZhouM2AN,Mengesha-Du-non, Mengesha-DuElasticity}  for proper mathematical analysis for the linear case.  The quantity $\frac{\cD(\bu)(\bx,\by)}{|\by - \bx|}$ is what is known as the ``linearized nonlocal strain" and has been used in nonlinear models of damage and fracture \cite{Lipton14,Lipton15} as well. 
For any $1 < p <\infty$ and $A(\by, \bx) = a(|\by-\bx|)$,  by using variational methods well posedness of the coupled system \eqref{MAINEQN} has been established in \cite{Mengesha-Du-non} with appropriate volumetric conditions. Moreover, by exploiting the connection between the spaces $\mathcal{X}^{s}_{p}(\Omega)$ and $W^{1,p}_{Sym}(\Omega;\mathbb{R}^{d})$ it has been shown that \eqref{MAINEQN} is 
 a fractional analogue 
of a strongly coupled nonlinear system of partial differential equations of the type 
\begin{equation}\label{local-eqn}
\text{{\bf div}} \left(\left| (\nabla {\bf u})_{sym}\right|^{p-2}(\nabla {\bf u})_{sym}(\bdx)\right) = {\bf F}(\bdx)\,.
\end{equation}
In fact, for specific variational problems, this relationship has been established via $\Gamma$-convergence in \cite{Mengesha-Du-non}  in the event of vanishing nonlocality (that is, $s\to 1^{-}$).   Regularity of solutions of the nonlinear system has been the subject of recent works, see \cite{Veiga-Crispo}. 

The second main result of the paper is on the self-improving properties of solutions to the nonlocal nonlinear coupled system \eqref{MAINEQN}.  The following is the precise statement we will prove. 
\begin{theorem}\label{Schikorra-theorem}
Let $s\in(0,1)$, $p \geq 2$, and $\Omega\subset \bbR^{d}$ be a bounded domain. Let $\bF \in [\mathcal{X}^{s - \veps(p-1)}_{p}(\Omega)]^{\ast}$, and let $\bu$ be a weak solution to the coupled system of nonlocal equations \eqref{MAINEQN} corresponding to $\bF$. Then there exists a positive constant $\veps_{0}$ such that for all $\veps \in (0, \veps_0)$ the weak solution $\bu$ belongs to  $W^{s+\veps, p}_{ loc} (\Omega)$. Moreover, for any $\eta \in C^{\infty}_{c}(\Omega)$, there exists a positive constant $C$ such that 
\[
[\eta\bu]_{W^{s + \veps, p}(\mathbb{R}^{d})} \leq C \|{\bF }\|^{1\over p-1}_{\left[ \mathcal{X}^{s - \veps(p-1)}_{p}(\Omega) \right]^{\ast}} + C \|\bu\|_{\mathcal{X}^{s}_{p}(\Omega)}\,.
\]
\end{theorem}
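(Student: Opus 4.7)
The plan is to adapt the self-improving regularity framework of Kuusi-Mingione-Sire and Schikorra (developed for the fractional $p$-Laplacian) to the coupled system built from the projected difference quotient $\cD(\bu)$. Three ingredients drive the argument: a Caccioppoli-type energy estimate derived from the weak formulation, a fractional Gehring-type self-improving lemma, and the fractional Korn inequality of Theorem~\ref{theorem-korn}. Korn's inequality will enter twice---both to allow classical fractional Sobolev-Poincar\'e estimates to be applied to our projected semi-norm during the self-improvement step, and to reinterpret the final $\mathcal{X}^{s+\veps}_p$ control as a $W^{s+\veps,p}$ bound on $\eta\bu$.

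First I would establish a Caccioppoli estimate by fixing $\eta \in C_c^{\infty}(\Omega)$ and testing the weak formulation \eqref{defn-of-operator} with $\vphi = \eta^{p} \bu$. A discrete Leibniz-type decomposition of $\cD(\eta^{p}\bu)(\bx,\by)$, combined with symmetrization in $\bx \leftrightarrow \by$ (using $A(\bx,\by) = A(\by,\bx)$ and the fact that $\cD(\bu)(\bx,\by) = \cD(\bu)(\by,\bx)$), splits the integrand into a coercive diagonal piece proportional to $(\eta(\bx)^{p}+\eta(\by)^{p})|\cD(\bu)|^{p}$ and a cross-term bounded by $|\cD(\bu)|^{p-1}|\bu(\bx)+\bu(\by)||\eta(\bx)^{p}-\eta(\by)^{p}|$. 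Using $|\eta(\bx)^{p}-\eta(\by)^{p}| \leq C\|\nabla\eta\|_{\infty}|\bx-\by|$ together with Young's inequality to absorb the cross-term, and handling $\langle \bF, \eta^{p}\bu\rangle$ via duality at the scale $s - \veps(p-1)$, then produces the Caccioppoli bound
\[
\int_{\Omega}\!\int_{\Omega} \frac{(\eta(\bx)^{p}+\eta(\by)^{p})|\cD(\bu)(\bx,\by)|^{p}}{|\bx-\by|^{d+sp}} \,d\bdy\,d\bdx \leq C \int_{\Omega}\!\int_{\Omega} \frac{|\bu(\bx)|^{p}\,\|\nabla\eta\|_{\infty}^{p}}{|\bx-\by|^{d+sp-p}} \,d\bdy\,d\bdx + C|\langle \bF, \eta^{p}\bu\rangle|.
\]

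Next I would establish a reverse-H\"older-type inequality on balls $B_{r} \Subset \Omega$. Combining the Caccioppoli with the fractional Sobolev-Poincar\'e inequality for $W^{s,p}(B_{r})$---available on our projected semi-norm precisely because of Theorem~\ref{theorem-korn}---and rearranging the exponents yields a self-improving estimate of the form
\[
\bigg(\frac{1}{|B_{r/2}|^{2}}\int_{B_{r/2}}\!\int_{B_{r/2}} \frac{|\cD(\bu)|^{p\kappa}}{|\bx-\by|^{d+sp}}\,d\bdy\,d\bdx\bigg)^{\!1/\kappa} \leq \frac{C}{|B_{r}|^{2}}\int_{B_{r}}\!\int_{B_{r}} \frac{|\cD(\bu)|^{p}}{|\bx-\by|^{d+sp}}\,d\bdy\,d\bdx + \text{(data)}+\text{(tail)},
\]
for some $\kappa > 1$. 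A fractional Gehring lemma, of the kind developed in the self-improvement literature for nonlocal equations, then upgrades the integrability exponent by a small $\delta > 0$. A change of variables in the exponent converts this integrability gain at order $s$ into a fractional differentiability gain via H\"older's inequality, yielding $\bu \in \mathcal{X}^{s+\veps}_{p}(B_{r/2})$ for an explicit $\veps = \veps(\delta) > 0$. The quantitative form of Theorem~\ref{theorem-korn} then closes the argument with the stated $W^{s+\veps,p}$ bound for $\eta\bu$.

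The main obstacle is expected to be the self-improvement step itself. The Gehring strategy requires a reverse-H\"older inequality with a genuine power gain, and deriving such a gain from only the projected quantity $\cD(\bu)$---rather than the full difference $\bu(\bx)-\bu(\by)$---demands Sobolev-Poincar\'e and tail estimates that become usable only after Theorem~\ref{theorem-korn} has identified the two semi-norms. A further delicate point is that the source $\bF$ must be absorbed into the Gehring iteration at precisely the dual scale $[\mathcal{X}^{s-\veps(p-1)}_{p}(\Omega)]^{\ast}$; balancing this exponent so that Young's inequality leaves a factor $\|\bF\|^{1/(p-1)}$ while preserving the self-improving character of the estimate is where most of the technical care will be required.
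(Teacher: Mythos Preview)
Your proposal follows the Kuusi--Mingione--Sire route (Caccioppoli estimate, reverse H\"older inequality, nonlocal Gehring lemma), whereas the paper takes Schikorra's commutator approach. These are genuinely different strategies, and in principle either could be adapted to the coupled system. However, your sketch has a real gap at the conversion step. You claim that once a Gehring lemma gives local finiteness of
\[
\int_{B}\int_{B}\frac{|\cD(\bu)(\bx,\by)|^{p+\delta}}{|\bx-\by|^{d+sp}}\,\mathrm{d}\by\,\mathrm{d}\bx,
\]
a ``change of variables in the exponent via H\"older's inequality'' yields $\bu\in\mathcal{X}^{s+\veps}_{p}$. This fails: if you apply H\"older with exponents $\tfrac{p+\delta}{p}$ and $\tfrac{p+\delta}{\delta}$ to $\int\!\!\int |\cD(\bu)|^{p}|\bx-\by|^{-d-(s+\veps)p}$, the residual factor is $\int_{B}\int_{B}|\bx-\by|^{-\beta}$ with $\beta=d+\veps p(1+p/\delta)>d$, which diverges. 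Higher integrability of $\cD(\bu)$ against the fixed kernel $|\bx-\by|^{-d-sp}$ simply does not imply higher differentiability by soft arguments. The Kuusi--Mingione--Sire machinery that \emph{does} produce a differentiability gain is built around a diagonal/off-diagonal dual-pair formulation and a bespoke level-set Gehring lemma whose output is already $W^{s+\veps,p+\delta}_{loc}$; none of that structure is present in your outline, and the displayed reverse H\"older inequality (with $p\kappa$ on the left and $p$ on the right) is also not what Caccioppoli plus Sobolev--Poincar\'e actually gives---that combination produces a lower exponent on the right, not a higher one on the left.

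The paper avoids this entirely. Its mechanism is ``differentiating the equation'': an interpolation lemma (Lemma~\ref{lma-InterpolationLemma}) bounds $[\tilde\bu]_{\mathcal{X}^{s+\veps}_{p}}^{p}$ by a dual norm of $\bbL^{s+\veps}_{p}\tilde\bu$ plus lower-order terms; a commutator estimate (Theorem~\ref{thm-CommutatorEstimate}) shows $\langle\bbL^{s+\veps}_{p,B}\tilde\bu,\vphi\rangle$ differs from $c\,\langle\bbL^{s}_{p,B}\tilde\bu,(-\Delta)^{\veps p/2}\vphi\rangle$ by $O(\veps)\,[\tilde\bu]_{\mathcal{X}^{s+\veps}_{p}}^{p-1}[\vphi]_{\mathcal{X}^{s+\veps}_{p}}$, which is absorbed for $\veps$ small; finally Lemma~\ref{tech-tools} shows $\eta(-\Delta)^{\veps p/2}\vphi$ is an admissible test function in $\mathcal{X}^{s-\veps(p-1)}_{p}$, so the equation $\bbL^{s}_{p,\Omega}\bu=\bF$ delivers control by $\|\bF\|_{[\mathcal{X}^{s-\veps(p-1)}_{p}]^{\ast}}$. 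The fractional Korn inequality is used exactly where you anticipated---inside Lemma~\ref{tech-tools} to pass between $\mathcal{X}$-seminorms and $W^{s,p}$-seminorms---but no Gehring-type iteration is needed; the $\veps$-gain is produced directly by transferring $(-\Delta)^{\veps p/2}$ across the pairing.
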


The implication  of the regularity result in the theorem is that, with no additional smoothness conditions on the coefficient $A(\bdx, \bdy)$, a  weak solution to the coupled system \eqref{MAINEQN} has improved fractional differentiability in response to improved regularity in the data.   
For scalar equations, this type of self-improving property of solutions is obtained in \cite{Kuusi-M-S} using reverse H\"older inequalities and nonlocal Gehring-type lemmas, obtained in \cite{Schikorra} via a commutator estimate and later obtained in \cite{Auscher} via a functional analytic approach.  The main contribution of this  paper is the extension of the self-improving properties of solutions obtained in the above cited works to the nonlocal nonlinear system \eqref{MAINEQN}.  
We should note that the application of appropriate embedding estimates imply both improved differentiability and higher integrability. For scalar nonlocal equations higher integrability (without improved differentiability) of weak solutions was established in  \cite{Bass-Ren} following classical techniques. The result in  \cite{Bass-Ren} is extended to hold for solutions to the nonlocal system \eqref{MAINEQN} in the recent work \cite{Scott-Mengesha}. 

To prove Theorem \ref{Schikorra-theorem}, we follow the approach in \cite{Schikorra} and is close in spirit with the technique of ``differentiating the equation," and finding relations between higher derivatives of solutions and test functions in order to estimate derivatives of the solution.  This is possible for classical linear equations via  integration by parts and transferring derivatives to test functions. For a special case of the nonlocal system at hand, for $p=2$, $K=1$, and $\Omega = \bbR^d$ we can demonstrate this easily. First notice that we can write  the operator $\bbL^s_{2, \bbR^d}$ in Fourier symbols as 
\[
\cF({\bbL^s_{2, \bbR^d}{\bf u}})(\bfxi) = \big( 2 \pi |\bfxi| \big)^{2s}\left( l_{1}\mathbb{I} + l_2 {\bfxi\otimes \bfxi\over |\bfxi|^{2}}\right) 
\]
where $l_1$ and $l_2$ are positive constants, see \cite{Mengesha-Hardy, Du-ZhouM2AN}.  Then for $\veps>0$ small, via Plancherel's theorem $\langle \bbL^{s+\veps}_{2,\bbR^d} \bu,\vphi\rangle = \langle\bbL^s_{2,\bbR^d} \bu, (-\Delta)^{\veps} \vphi\rangle$, where for any $\alpha$ the operator  $(-\Delta)^{\alpha} \vphi$ is the $\alpha$-fractional Laplacian. When working with the nonlinear ``regional" operator $\bbL^s_{p, \Omega}({\bf u})$, such a clean transfer of derivatives to the test function is not possible. However, as has been done in \cite{Schikorra} one can measure the price of transferring the derivatives by estimating the resulting commutator.  Unlike \cite{Schikorra}, the estimates we establish are based on the smaller $[\cdot]_{\mathcal{X}^{s}_{p}}$ norm, leading us to write some arguments closely resembling those in \cite{Schikorra}. Afterward, we use our first result (Theorem \ref{theorem-korn}) to conclude that the estimates are also valid using the larger semi-norm $[\cdot]_{W^{s,p}}$.

The rest of the paper is organized as follows. In the first part, we focus on proving Theorem \ref{theorem-korn}. To that end, in the next section we recall the classical Poisson kernel and  will present some preliminaries.  We will also review how it is used in the characterization of functions belonging to the fractional Sobolev spaces. In Section \ref{Poisson-type} we introduce a Poisson-type kernel that is central to our result. Its properties as well the relationship between associated Poisson-type integrals and classical Poisson integral will be established. This relationship will be used to in Section \ref{characterization} to prove the main result of the paper. In the second part of the paper we will prove Theorem \ref{Schikorra-theorem}. 
\section{Preliminaries: Poisson integrals and The Riesz transforms}
We recall the classical Poisson kernel and some of its properties that we will use in this paper. We begin with the formula
\begin{equation}\label{Poisson}
p_t(\bx) := \frac{2}{\omega_d} \frac{t}{(|\bx|^2 + t^2)^{\frac{d+1}{2}}}\,,  \quad t > 0\,,  \quad\bdx\in \mathbb{R}^{d}\,, 
\end{equation}
where $\omega_d$ is the volume of the unit sphere in $\mathbb{R}^{d+1}$. 
It is easy to check that $p_t$ is an approximation to the identity.
Its Fourier transform is given by  $\cF({p_t})(\bfxi) = \e^{-2 \pi |\bfxi| t}$ for every $t > 0$, where the Fourier transform operator $\cF$ is given by the formula 
\[
\cF(g)(\bfxi) = \int_{\mathbb{R}^{d}} e^{-\imath2\pi\bfxi\cdot\bdx} f(\bdx)  \, \mathrm{d}\bdx\,. 
\]
It then follows from the Fourier transform expression that the Poisson kernel has the semigroup property $p_{t_1} \, \ast p_{t_2} = p_{t_1+t_2}$ for every $t_1$, $t_2 > 0$.  Using the notation $\nabla$ for the vector of partial derivative operators  $(\nabla_{\bdx}, \partial_{t}) = (\p_1, \p_2, \ldots , \p_{d},\p_t)$, we have that  $\nabla p_{t} \in L^{1}(\mathbb{R}^{d})$ with the estimates 
\[
\int_{\mathbb{R}^{d}} |\partial_{t} p_{t}(\bdx)|\, \mathrm{d}\bdx \leq  {c\over t}\,, \qquad \int_{\mathbb{R}^{d}} |\partial_{x_{j}} p_{t}(\bdx)| \, \mathrm{d}\bdx \leq {c\over t}\,, \qquad  t>0\,, \quad  j=1,\cdots, d\,,
\]  
for some constant $c > 0$. 
For any $f\in L^{p}$, $1\leq p \leq \infty$, its \textit{Poisson integral} is given by 
\[u(\bx,t) := p_t \, \ast f (\bx) = \int_{\mathbb{R}^{d}} p_t(\bdy) f(\bdx-\bdy)\, \mathrm{d}\bdy\,. 
\] 
The Poisson integral is a $C^{\infty}$ harmonic function in $\mathbb{R}^{d+1}_{+} := \bbR^d \times (0,\infty)$, 
with the property that  ${ u} (\cdot, t) \to f$ in $L^{p}(\mathbb{R}^{d})$ as $t\to 0$.  For a vector field ${\bf f}$ its vector-valued Poisson integral ${\bf u} (\bdx, t) = p_t * {\bf f}(\bdx)$ will be defined where the convolution is taken component wise.

The Riesz transforms will be used frequently throughout this work.  We recall that for $1 \leq j \leq d$ and $f \in \cS(\bbR^d)$ the class of Schwartz functions, the $j^{th}$ \textit{Riesz transform} is an operator defined as
\[
R_j(f)(\bx) := \frac{2}{\omega_d} \pv \int_{\bbR^d}{\frac{y_j}{|\by|^{d+1}} f(\bx-\by) }{\, \mathrm{d}\by}\,.
\]
For any $f \in \cS(\bbR^d)$ we have
$
\cF(R_j(f))(\bfxi) = -\imath \frac{\xi_j}{|\bfxi|} \cF(f)(\bfxi)\,.
$
From this formula it is immediately clear that the Riesz transforms commute with partial differential operators $\p_{x_j}$. We recall also the celebrated result of $L^p$ boundedness (c.f. \cite[Chapter III]{Stein}), namely
$$
\Vnorm{R_j f}_{L^p(\bbR^d)} \leq C(p) \Vnorm{f}_{L^p(\bbR^d)}\,, \qquad j = 1, \ldots, d\,, \quad 1 < p < \infty\,.
$$
The Riesz transforms can be used to establish relations between the partial derivatives of functions. Let us display such relations for the Poisson integral of a function now. First note that for $\bff \in \cS(\bbR^d;\bbR^d)$, its Poisson integral $u = p_{t}\ast {\bf f}$ belongs to $\cS(\bbR^d; \bbR^d)$. Further, for any $\bdx\in \mathbb{R}^{d}$ and $t > 0$ we have 
\begin{equation}\label{normal-tangential-PI}
 \p_t \bu(\bx,t) = - \sum_{j=1}^d R_j (\p_{x_j} \bu)(\bdx,t)\, ,\qquad \p_{x_j} \bu(\bdx,t) = R_j ( \p_t \bu)(\bdx,t) \quad \text{ for } j=1, \cdots, d\,.
\end{equation}
We can verify the above identities by taking the Fourier transform in the $\bdx$ variable as follows.  
\begin{align*}
\cF_{\bdx} \big( \p_t \bu(\cdot,t) \big)(\bfxi) = \p_t \, \e^{- 2 \pi |\bfxi| t}\cF({\bff})(\bfxi) &= - 2 \pi |\bfxi| \e^{-2  \pi |\bfxi| t} \cF({\bff})(\bfxi) \\
& = - \sum_{j=1}^d \left( - \frac{\imath \xi_j}{|\bfxi|} \right)  (2 \pi \imath \xi_j) \e^{-2 \pi |\bfxi| t}\cF({\bff})(\bfxi) \\
&= \cF \left( - \sum_{j=1}^d R_j \big[ \p_{x_j} \bu(\cdot,t) \big] \right)(\bfxi)\,, 
\end{align*}
demonstrating the first relation in \eqref{normal-tangential-PI}.  Conversely, 
\begin{align*}
\cF_{\bdx} \big( \p_{x_j} \bu(\cdot,t) \big)(\bfxi) = (2 \pi \imath \xi_j) \e^{- 2 \pi |\bfxi| t} \cF({\bff})(\bfxi) &= \left(-\imath \frac{\xi_j}{|\bfxi|} \right)  (- 2 \pi |\bfxi|) \e^{-2  \pi |\bfxi| t} \cF({\bff})(\bfxi) \\
&= \left(-\imath \frac{\xi_j}{|\bfxi|} \right) \p_t \Big( \e^{-2 \pi |\bfxi| t} \cF({\bff})(\bfxi)  \Big)= \cF_{\bdx} \Big( R_j \big[ \p_t \bu(\cdot,t) \big] \Big)(\bfxi)\,, 
\end{align*}
establishing the second identity in \eqref{normal-tangential-PI}. The pointwise relation in \eqref{normal-tangential-PI} and the $L^p$ boundedness of the Riesz transforms implies that for every Schwartz vector field $\bff \in \cS(\bbR^d;\mathbb{R}^{d})$ and $1 < p < \infty$,
\begin{equation}\label{eq-ComparisonOfDerivativesOfPoissonIntegral-FullBesovSpaces}
\Vnorm{\p_t \bu(\cdot,t)}_{L^p(\bbR^d)} \approx \sum_{j=1}^d \Vnorm{\p_{x_j} \bu(\cdot,t)}_{L^p(\bbR^d)}
\end{equation}
where $ \approx $ represents equivalence of norms up to a constant independent of ${\bf f}$.  Using density of $\cS(\bbR^d;\mathbb{R}^{d})$ in $L^{p}(\bbR^d;\mathbb{R}^{d})$ and the fact that $|\nabla p_{t}| \in L^{1}(\mathbb{R}^{d})$, \eqref{eq-ComparisonOfDerivativesOfPoissonIntegral-FullBesovSpaces} remains true for all ${\bf f} \in L^{p}(\bbR^d;\mathbb{R}^{d})$.

Poisson integrals  can be used to  give a characterization of the $L^{p}$ norm of a function;  see \cite[Chapter IV]{Stein} for details.  
Given a function $f$ we introduce the Littlewood-Paley $g$-function of  $f$ in terms of its Poisson integral $u$ as 
\[
g(f)(\bdx) = \left(\int_{0}^{\infty}t\, |\nabla u(\bdx, t)|^{2} \, \mathrm{d}t \right)^{1/2}\,, \qquad \nabla u(\bdx, t) = (\nabla_{\bdx} u, \partial_{t} u)\,. 
\]
Theorem 1 of \cite[Chapter IV]{Stein} states that for $1 < p < \infty$, if $f\in L^{p}(\mathbb{R}^{d})$ so is $g(f)$, and its $L^{p}$ norm is comparable with that of $f$.
Most important to our work is the usefulness of Poisson integrals in characterizing fractional Sobolev spaces $W^{s,p}(\mathbb{R}^{d};\mathbb{R}^{d})$. 
\begin{proposition}[Proposition $7'$ in\cite{Stein}]\label{thm-Prop7InStein}
Let $s\in (0, 1)$ and $1 <p < \infty$. Let $\bff = (f_1,f_2, \ldots, f_d) \in L^p(\bbR^d)$. Then $\bff \in W^{s, p}(\bbR^d;\mathbb{R}^{d})$ if and only if
\begin{equation}\label{eq-DyConditionOnPoissonIntegral-FullBesovSpace}
\int_{0}^{\infty}{ t^{p(1-s)} \Vnorm{\p_t \bu(\cdot,t)}^{p}_{L^p(\bbR^d)} \, \frac{\mathrm{d}t}{t}} < \infty\,. 
\end{equation}
Moreover, there exists constants $C_{1}$ and $C_{2}$ depending only on $s$, $p$, and $d$ such that 
\begin{equation} \label{semi-norm-EQUIV}
C_{1}\int_{0}^{\infty}{ t^{p(1-s)} \Vnorm{\p_t \bu(\cdot,t)}^{p}_{L^p(\bbR^d)} \, \frac{\mathrm{d}t}{t}} \leq [{\bf f}]_{W^{s, p}}^{p} \leq C_2 \int_{0}^{\infty}{ t^{p(1-s)} \Vnorm{\p_t \bu(\cdot,t)}^{p}_{L^p(\bbR^d)} \, \frac{\mathrm{d}t}{t}}\,.
\end{equation}

\end{proposition}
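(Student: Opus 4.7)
The plan is to establish both inequalities in \eqref{semi-norm-EQUIV} separately, by exploiting three structural features of the Poisson kernel: the zero-mean property $\int_{\bbR^d} \partial_t p_t(\by)\,\mathrm{d}\by = 0$; the parabolic scaling $\partial_t p_t(\by) = t^{-(d+1)} \psi(\by/t)$ for a fixed radial Schwartz function $\psi$ with $\int \psi = 0$; and the Riesz-transform identities \eqref{normal-tangential-PI} together with their $L^p$-boundedness. By density of $\cS(\bbR^d;\bbR^d)$ in $L^p(\bbR^d;\bbR^d)$ and the fact that $|\nabla p_t| \in L^1(\bbR^d)$, it suffices to argue for $\bff \in \cS(\bbR^d;\bbR^d)$ and then conclude by a limiting argument.

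For the upper bound, start from the zero-mean identity
\[
\partial_t \bu(\bx, t) = \int_{\bbR^d} \partial_t p_t(\by)\,(\bff(\bx - \by) - \bff(\bx))\,\mathrm{d}\by
\]
and apply Minkowski's integral inequality in $L^p(\bbR^d;\mathrm{d}\bx)$ to obtain $\Vnorm{\partial_t \bu(\cdot, t)}_{L^p} \leq \int_{\bbR^d} |\partial_t p_t(\by)|\,g(\by)\,\mathrm{d}\by$, where $g(\by) := \Vnorm{\bff(\cdot - \by) - \bff(\cdot)}_{L^p}$. After raising to the $p$-th power via H\"older (treating $|\partial_t p_t|$ as a suitably normalized weight), the substitution $\by = t\bdz$, Fubini, and a polar change of variables in $\bdz$ reduce $\int_0^\infty t^{p(1-s)-1} \Vnorm{\partial_t \bu(\cdot, t)}_{L^p}^p\,\mathrm{d}t$ to $[\bff]_{W^{s,p}}^p$ multiplied by the moment $\int_0^\infty |\psi(\rho)|\,\rho^{sp+d-1}\,\mathrm{d}\rho$. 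This moment is finite precisely thanks to the rapid decay and the cancellation $\int \psi = 0$ of $\psi$; exploiting the cancellation (rather than absolute values) is essential when $sp \geq 1$.

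For the lower bound, fix $\mathbf{h} \in \bbR^d$ and decompose
\[
\bff(\bx + \mathbf{h}) - \bff(\bx) = [\bff(\bx + \mathbf{h}) - \bu(\bx+\mathbf{h}, |\mathbf{h}|)] + [\bu(\bx+\mathbf{h}, |\mathbf{h}|) - \bu(\bx, |\mathbf{h}|)] + [\bu(\bx, |\mathbf{h}|) - \bff(\bx)].
\]
The outer two terms are controlled via the identity $\bff - \bu(\cdot, t_0) = -\int_0^{t_0} \partial_t \bu(\cdot, t)\,\mathrm{d}t$ and Minkowski, producing the cumulative quantity $\int_0^{|\mathbf{h}|} \Vnorm{\partial_t \bu(\cdot, t)}_{L^p}\,\mathrm{d}t$. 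The middle term is estimated by the mean value theorem as $|\mathbf{h}|\,\Vnorm{\nabla_\bx \bu(\cdot, |\mathbf{h}|)}_{L^p}$, and the Riesz-transform identities \eqref{normal-tangential-PI} together with \eqref{eq-ComparisonOfDerivativesOfPoissonIntegral-FullBesovSpaces} give $\Vnorm{\nabla_\bx \bu(\cdot, |\mathbf{h}|)}_{L^p} \leq C \Vnorm{\partial_t \bu(\cdot, |\mathbf{h}|)}_{L^p}$. Writing $[\bff]_{W^{s,p}}^p$ in polar coordinates as $\int_{S^{d-1}} \int_0^\infty r^{-sp-1} \Vnorm{\bff(\cdot + r\theta) - \bff(\cdot)}_{L^p}^p\,\mathrm{d}r\,\mathrm{d}\theta$ and inserting the bounds generates two contributions: a Hardy-type quantity $\int_0^\infty r^{-sp-1} \bigl(\int_0^r \Vnorm{\partial_t \bu(\cdot, t)}_{L^p}\,\mathrm{d}t\bigr)^p\,\mathrm{d}r$, converted by the classical Hardy inequality into $C \int_0^\infty t^{p(1-s)-1} \Vnorm{\partial_t \bu(\cdot, t)}_{L^p}^p\,\mathrm{d}t$ in exactly the admissible range $0 < s < 1$ and $1 < p < \infty$; and a residual term $\int_0^\infty r^{p-sp-1} \Vnorm{\partial_t \bu(\cdot, r)}_{L^p}^p\,\mathrm{d}r$ already of the desired form.

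The main obstacle is the lower bound, which hinges on two interlocking ingredients: the Riesz-transform bridge \eqref{eq-ComparisonOfDerivativesOfPoissonIntegral-FullBesovSpaces} exchanging the spatial gradient of the harmonic extension for its normal derivative, and the Hardy inequality whose admissible range matches precisely the range of $s$ and $p$ in the statement. A naive approach avoiding the Riesz transforms would produce a spatial-gradient norm that one does not know how to bound by the given integrand, while omitting the decomposition at the scale $t = |\mathbf{h}|$ would break the scaling in Hardy's inequality; the threshold choice $t_0 = |\mathbf{h}|$ is what balances the two regimes.
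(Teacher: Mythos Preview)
Your treatment of the right-hand inequality in \eqref{semi-norm-EQUIV} (what you call the ``lower bound'') follows the paper's own proof essentially verbatim: the same three-term decomposition of $\bff(\bx+\by) - \bff(\bx)$ at the scale $t = |\by|$, the mean value theorem for the middle term followed by the Riesz-transform comparison \eqref{eq-ComparisonOfDerivativesOfPoissonIntegral-FullBesovSpaces}, and Minkowski combined with Hardy's inequality for the two outer terms. The paper carries out the Hardy step by the equivalent direct change of variables $t = \tau r$ (producing the factor $\int_0^1 \tau^{s-1}\,\mathrm{d}\tau = 1/s$), but the two computations are the same.

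Your argument for the left-hand inequality (your ``upper bound''), however, has a genuine gap when $sp \geq 1$. After the H\"older/Jensen step with weight $|\partial_t p_t|$ and the substitution $\by = t\bdz$, the remaining moment is $\int_0^\infty |\psi(\rho)|\,\rho^{sp+d-1}\,\mathrm{d}\rho$, where a direct computation gives $\psi(\bdz) = c_d\,(|\bdz|^2 - d)(|\bdz|^2+1)^{-(d+3)/2}$. Since $|\psi(\rho)| \sim \rho^{-(d+1)}$ as $\rho\to\infty$, the integrand behaves like $\rho^{sp-2}$ and the moment diverges precisely when $sp \geq 1$. Your remark that one should ``exploit the cancellation $\int \psi = 0$'' does not repair this: once absolute values have been taken in the Minkowski/H\"older step, the cancellation is lost and cannot be recovered downstream. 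The standard fix---which the paper in fact uses later, for the matrix kernel $\mathbb{P}_t$ in the proof of $({\bf EQ_3})$ in Theorem~\ref{3inequalities}---is not a cancellation argument at all: one splits the convolution at $|\by| = t$, uses the pointwise bounds $|\partial_t p_t(\by)| \leq C\min(t^{-d-1}, |\by|^{-d-1})$, and applies the two one-dimensional Hardy inequalities on $(0,\infty)$. That yields the estimate across the full range $0 < s < 1$. The paper itself does not write this out for the left inequality (it only cites \cite{Stein, Taibleson}), but your sketch as written is valid only for $sp < 1$.
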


\begin{proof}
The inequality on the left-hand side  in \eqref{semi-norm-EQUIV} is proved in \cite{Stein, Taibleson}, and the right-hand proved in \cite{Taibleson}. However the inequality on the right-hand side is the one that we need later and so for completeness 
we present its proof here. We will prove it for scalar functions, and for the vector case it follows easily by making the comparison component wise. We let $u(\bdx, t)= p_{t}\ast {f} (\bdx)$.  
Let $\bx$, $\by$ be such that $\bx$ and $\bx + \by$ are Lebesgue points of $f$. We choose $t = |\by|$ and write
\begin{equation}
\begin{split}
	f(\bx+\by) - f(\bx) &= \Big( u(\bx+\by,|\by|) - u(\bx,|\by|) \Big) \\
	&+ \Big( f(\bx+\by)- u(\bx+\by,|\by|) \Big) - \Big( f(\bx)-u(\bx,|\by|) \Big)\,.
\end{split}
\end{equation}
We estimate each of the integrals associated with the three differences separately. We denote these integrals by $\mathrm{I}, \mathrm{II}$ and $\mathrm{III}$.  Using the mean value theorem, 
\begin{equation}\label{eq-FundamentalTheoremOfCalculusIdentity}
	u(\bx+\by,|\by|) - u(\bx,|\by|) = \int_{0}^{1}{\grad_{\bx} u(\bx+\tau \by,|\by|) \cdot \by}{\, \mathrm{d}\tau}\,.
\end{equation}
It then follows from Minkowski's inequality that 
\begin{equation}
\begin{split}
\left( \int_{\bbR^d}{\Big| u(\bx+\by,|\by|) - u(\bx,|\by|) \Big|^p}{\, \mathrm{d}\bx} \right)^{1/p} &=  \left( \int_{\bbR^d}{\left| \int_{0}^{1}{\grad_{\bx} u(\bx+\tau \by,|\by|) \cdot \by}{\, \mathrm{d}\tau} \right|^p}{\, \mathrm{d}\bx} \right)^{1/p} \\
&\leq |\by| \, \Vnorm{\grad_{\bx} u(\cdot,|\by|)}_{L^p(\bbR^d)}\,.
\end{split}
\end{equation}
Then using polar coordinates ($t = |\by'|$) we get that
\begin{equation}\label{eq-EstimateForPartII-I}
\begin{split}
	\left(\mathrm{I}\right)^{p} :=  \int_{\bbR^d}\int_{\bbR^d} \frac{\left| u(\bx+\by,|\by|) - u(\bx,|\by|) \right|^p}  {|\by|^{d+sp}}{\, \mathrm{d}\bx}  \, \mathrm{d}\bdy
	&\leq  \int_{\bbR^d}{\frac{  \Vnorm{\grad_{\bx} u(\cdot,|\by|)}^{p}_{L^p(\bbR^d)} }{|\by|^{d+sp-p}}}{\, \mathrm{d}\by}  \\
	&\leq C \int_{0}^{\infty}{\frac{\Vnorm{\grad_{\bx} u(\cdot,t)}^{p}_{L^p(\bbR^d)} }{t^{sp-p+1}}}{\mathrm{d}t}\,.
\end{split}
\end{equation}
Now we repeat the same argument for the second difference; using \eqref{eq-FundamentalTheoremOfCalculusIdentity}, 
\[
	f(\bx+\by) - u(\bx+\by,|\by|) = - \int_{0}^{1}{\p_t u(\bx+ \by, \tau |\by|) \, |\by|}{\, \mathrm{d}\tau}\,.
\]
Then Minkowski's inequality gives us 
\begin{equation}
\begin{split}
\left( \int_{\bbR^d}{\Big| f(\bx+\by) - u(\bx+\by,|\by|) \Big|^p}{\mathrm{d}\bx} \right)^{1/p} 
&\leq \int_{0}^{1}{|\by| \, \Vnorm{\p_t u(\cdot,\tau |\by|)}_{L^p(\bbR^d)}}{\mathrm{d} \tau}\,.
\end{split}
\end{equation} Calculations similar to the one above along with a second application of Minkowski's inequality show that 
\[
	\mathrm{II} := \left( \intdm{\bbR^d}{\frac{ \Vnorm{f(\cdot +  \by) - u(\cdot + \by,|\by|) }^{p}_{L^p(\bbR^d)} }{|\by|^{d+sp}}}{\by} \right)^{1/p}
	 \leq C \intdmt{0}{1}{\left( \intdmt{0}{\infty}{\frac{ \Vnorm{\p_t u(\cdot,\tau r)}^{p}_{L^p(\bbR^d)}}{r^{sp-p+1}}}{r} \right)^{1/p}}{\tau}\,.
\]
Changing variables $t = \tau r$ in the inner integral we obtain that 
\begin{equation}\label{eq-EstimateForPartII-II}
\begin{split}
\mathrm{II} \leq C \int_{0}^{1} \left( \int_{0}^{\infty}{\frac{ \Vnorm{\p_t u(\cdot,t)}^p_{L^p(\bbR^d)} }{t^{sp-p+1}} \tau^{sp-p}}{\, \mathrm{d}t} \right)^{1/p} {\, \mathrm{d}\tau}  &\leq C \int_{0}^{1}{\tau^{s-1}}{\, \mathrm{d}\tau} \left( \int_{0}^{\infty}{\frac{ \Vnorm{\p_t u(\cdot,t)}^p_{L^p(\bbR^d)} }{t^{sp-p+1}} }{\, \mathrm{d}t} \right)^{1/p}  \\
	&= \frac{C}{s}  \left( \int_{0}^{\infty}{\frac{ \Vnorm{\p_t u(\cdot,t)}^p_{L^p(\bbR^d)}}{t^{sp-p+1}}}{\, \mathrm{d}t} \right)^{1/p}\,.
\end{split}
\end{equation}
The quantity $f_j(\bx) - u(\bx,|\by|)$ can be estimated exactly the same way, and so we obtain
\begin{equation}\label{eq-EstimateForPartII-III}
\begin{split}
	\left(\mathrm{III}\right)^{p} &:=  \int_{\bbR^d}{\frac{ \Vnorm{f - u(\cdot,|\by|) }^p_{L^p(\bbR^d)}}{|\by|^{d+sp}}}{\mathrm{d}\by}  \leq  {C \over s^p} \int_{0}^{\infty}{\frac{\Vnorm{\p_t u(\cdot,t)}^p_{L^p(\bbR^d)}}{t^{sp-p+1}}}{\, \mathrm{d}t} \,. 
\end{split}
\end{equation}
We now invoke the comparison estimate \eqref{eq-ComparisonOfDerivativesOfPoissonIntegral-FullBesovSpaces} to conclude the proof. 
\end{proof}

\section{Poisson-type integrals}\label{Poisson-type}
In this section we introduce a Poisson-type matrix kernel $\mathbb{P}_{t}(\bdx)$   that we convolve with vector fields so that the resulting Poisson-type integral can be used to characterize $\mathcal{X}^{s}_{p}(\mathbb{R}^{d})$ 
in the same spirit as Proposition \ref{thm-Prop7InStein}.. 
\subsection{Definition of Poisson-type kernel and integral}
\subsubsection*{Poisson-type kernel} Denoting $ \bbM_{J}(\bbR)$ to be the space of $J\times J$ matrices with real entries, we introduce the  $\bbM_{d+1}(\bbR)$-valued function $\mathbb{P}(\bdx) = \left(\mathfrak{p}^{jk}(\bdx)\right)_{j,k=1}^{d+1}$ as 
\begin{equation}\label{sym-oned-poisson}
\mathfrak{p}^{jk}(\bx)  := \frac{2(d+1)}{\omega_d} \frac{(\bx,1)_j (\bx,1)_k}{(|\bx|^2 + 1)^{\frac{d+3}{2}}}\,, \quad \bdx \in \mathbb{R}^{d}, \quad j, k=1,\cdots, d+1 
\end{equation}
where $(\bdx, 1)$ is a $d+1$ vector whose $d+1$ component is 1. 
For $t > 0$, we denote the the Poisson-type kernel $\mathbb{P}_t : \bbR^d  \to \bbM_{d+1}(\bbR)$ by
\begin{equation}
\mathbb{P}_t(\bx) := {1\over t^{d}} \mathbb{P}\bigg({\bx\over t}\bigg)\,, \,\, \bdx\in \mathbb{R}^{d}.
\end{equation}
We notice that  the $(d+1)\times (d+1)$ matrix $\mathbb{P}_t(\bdx)$ has the form
\begin{equation} \label{sym-oned-poisson-parts}
\mathbb{P}_t(\bx) = 
	\frac{2(d+1)}{\omega_d} \frac{t}{(|\bx|^2 + t^2)^{\frac{d+3}{2}}}  \begin{bmatrix}
	 \bx \otimes \bx& t\bdx \\
	t\bdx & t^{2} \\
	\end{bmatrix}\,,
\end{equation}
where $\bdx$ is considered both a row and column vector in $\bbR^{d}$. 

\subsubsection*{Poisson-type integrals} 
Given $\bF : \bbR^{d} \to \bbR^{d+1}$ with $\bF = (F_1, \dots, F_{d+1}) \in L^p(\bbR^d)$, we define $\bU = (U_1, U_2, \ldots, U_{d+1}) : \bbR^{d+1}_+ \to \bbR^{d+1}$ as the convolution
\begin{equation}\label{Sym-Poisson}
\bU(\bx,t) := \mathbb{P}_t\, \ast \, \bF (\bx)\,.
\end{equation}The convolution in the above equation is taken in the sense of matrix multiplication. That is the $i^{th}$ entry component of $\bU$ is given by $U_i = \sum_{j=1}^{d + 1} \mathfrak{p}_t^{ij}\ast F_{j}$.

\subsection{Properties of the Poisson-type Kernel} 
We next establish basic but fundamental properties of $\mathbb{P}_t$  which are analogues of the properties of the classical Poisson kernel.  We begin by noting that  $\mathbb{P} \in C^{\infty}(\bbR^d;\bbM_{d+1}(\bbR))$, and
\[
(\bx,t) \mapsto \mathbb{P}_t(\bx) \in C^{\infty}\left( \overline{\bbR^{d+1}_+} \setminus B(0,\veps)\, ;\, \bbM_{d+1}(\bbR) \right)\,, \quad  \text{for every }\veps > 0\,.
\]
Moreover, it is immediate from the definition to see that for every $1 \leq p \leq \infty$, $\mathbb{P} \in L^p(\bbR^d ;\bbM_{d+1}(\bbR))$ with the pointwise estimate \[
|\mathbb{P}(\bx)| \leq \frac{C}{(1+|\bx|^2)^{\frac{d+1}{2}}}\,, \quad \bx \in \bbR^d\,.\]
By the norm of $|\mathbb{A}|$ for a matrix $\mathbb{A} = (a^{ij})$ we mean the Frobenius norm $|\mathbb{A}|  = \sqrt{\sum_{i,j} |a^{ij}|^{2}}$. 
In the following lemma we prove that the matrix kernel {$\mathbb{P}_{t}$} is in fact  an approximation to the identity. 
\begin{lemma}\label{thm-PropertiesOfPoissonTypeKernelP}
For any $t>0$, if $\mathbb{I}_{d+1}$ denotes the $(d+1)\times (d+1)$  identity matrix, then  
\[\int_{\bbR^d}{\mathbb{P}_{t}(\bx)}{\, \mathrm{d}\bx} =\int_{\bbR^d}{\mathbb{P}(\bx)}{\, \mathrm{d}\bx}  = 
  \bbI_{d+1}\,.\]   For any $\bF  \in L^1_{loc}(\bbR^d;\mathbb{R}^{d+1})$, the function $(\bx,t) \mapsto (\mathbb{P}_t \, \ast \bF)(\bx) \in C^{\infty}(\bbR^{d+1}_+)$ with
\begin{equation}
\lim\limits_{t \to 0^+} (\mathbb{P}_t \, \ast \, \bF)(\bx) = \bF(\bx) \quad \text{ for all Lebesgue points } \bx \in \bbR^d \text{ of } \bF\,.
\end{equation}
Moreover,  for $1 \leq p < \infty$ if $\bF \in L^p(\bbR^d;\mathbb{R}^{d+1})$, then 
\begin{equation}
\lim\limits_{t \to 0^+} \Vnorm{(\mathbb{P}_t \, \ast \, \bF) - \bF}_{L^p(\bbR^d)} \to 0\,. 
\end{equation}
\end{lemma}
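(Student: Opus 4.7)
The plan is to treat $\mathbb{P}_t$ as a matrix-valued approximation to the identity by showing two things: every entry is pointwise bounded by a multiple of the classical scalar Poisson kernel $p_t$, and $\int_{\bbR^d}\mathbb{P}_t\,\mathrm{d}\bx = \mathbb{I}_{d+1}$. From these two ingredients, all the stated properties follow by applying the standard approximation-to-the-identity machinery componentwise.

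First I would verify the normalization. The scaling $\bx = t\by$ immediately gives $\int_{\bbR^d}\mathbb{P}_t(\bx)\,\mathrm{d}\bx = \int_{\bbR^d}\mathbb{P}(\by)\,\mathrm{d}\by$, so it suffices to treat $t=1$. From \eqref{sym-oned-poisson} the off-diagonal entries of $\mathbb{P}$ are odd in at least one Cartesian coordinate of $\bx$, hence integrate to zero over $\bbR^d$. For the diagonal I would observe the clean trace identity
$$
\mathrm{tr}(\mathbb{P}_t(\bx)) = \frac{2(d+1)}{\omega_d}\frac{t(|\bx|^2+t^2)}{(|\bx|^2+t^2)^{(d+3)/2}} = (d+1)\, p_t(\bx),
$$
which integrates to $d+1$. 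Rotational symmetry in $\bx$ forces $\int \mathfrak{p}^{jj}$ to share a common value for $j=1,\dots,d$, so it suffices to compute the bottom-right entry $\int \mathfrak{p}^{d+1,d+1}$ directly; a one-dimensional Beta-function identity (equivalently, polar coordinates with the substitution $u = r^2$) evaluates it to $1$, and the trace formula then forces all the remaining diagonal entries to equal $1$ as well.

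Next I would record the pointwise domination $|\mathfrak{p}^{jk}_t(\bx)| \le (d+1)\, p_t(\bx)$, which follows from $|(\bx,t)_j(\bx,t)_k| \le |(\bx,t)|^2 = |\bx|^2+t^2$ applied in \eqref{sym-oned-poisson-parts}. Smoothness of $(\bx,t)\mapsto \mathbb{P}_t\ast \bF$ on $\bbR^{d+1}_+$ is then immediate from differentiation under the integral, since $\mathbb{P}_t(\bx)$ and all of its $(\bx,t)$-derivatives are uniformly bounded on compact subsets of $\{t>0\}$ and decay in $\bx$ at least as fast as $|\bx|^{-(d+1)}$, which is more than enough to pair against a locally integrable $\bF$.

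For the limits as $t\to 0^+$, using the normalization I rewrite componentwise
$$
(\mathbb{P}_t\ast \bF)_i(\bx) - F_i(\bx) = \sum_{j=1}^{d+1}\int_{\bbR^d}\mathfrak{p}^{ij}_t(\by)\bigl(F_j(\bx-\by)-F_j(\bx)\bigr)\,\mathrm{d}\by.
$$
Pointwise convergence at Lebesgue points of $\bF$ follows from the classical splitting into $|\by|<\delta$ and $|\by|\ge\delta$ combined with the domination by $p_t$, and $L^p$-convergence follows from the same domination together with the density of $C_c(\bbR^d;\bbR^{d+1})$ in $L^p$ for $1 \le p < \infty$. The only genuine obstacle, albeit a mild one, is the explicit Beta-function calculation $\int_{\bbR^d}\mathfrak{p}^{d+1,d+1}(\bx)\,\mathrm{d}\bx = 1$; once that is in hand, the remainder is scalar Poisson-kernel theory repeated $d+1$ times.
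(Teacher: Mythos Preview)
Your argument is correct and follows a genuinely different route from the paper's. The paper does not compute $\int_{\bbR^d}\mathbb{P}\,\mathrm{d}\bx$ directly; instead it observes the algebraic identity
\[
\frac{\mu+\lambda}{3\mu+\lambda}\,\mathbb{P}_t(\bx) = \bbK_t(\bx) - \frac{2\mu}{3\mu+\lambda}\,p_t(\bx)\,\bbI_{d+1},
\]
where $\bbK_t$ is the Poisson kernel for the Lam\'e system in the upper half-space, and then imports the normalization and approximation-to-the-identity properties of $\bbK_t$ from \cite{Martell-M32014, Martell-M32016}. The integral identity then drops out by subtracting $\int p_t\,\bbI_{d+1} = \bbI_{d+1}$ from $\int\bbK_t = \bbI_{d+1}$ and dividing by the scalar $\frac{\mu+\lambda}{3\mu+\lambda}$.

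Your approach is more elementary and fully self-contained: odd symmetry kills the off-diagonal entries, the trace identity $\mathrm{tr}(\mathbb{P}_t) = (d+1)p_t$ together with rotational symmetry among $x_1,\dots,x_d$ reduces the diagonal to a single Beta-integral computation, and the entrywise bound $|\mathfrak{p}^{jk}_t|\le (d+1)p_t$ lets you run the scalar Poisson-kernel theory componentwise. This avoids any dependence on the Lam\'e literature, at the cost of one explicit special-function evaluation. The paper's route, by contrast, explains \emph{why} $\mathbb{P}_t$ behaves like a Poisson kernel: it is literally a linear combination of two known Poisson kernels. Either method is perfectly adequate here; yours has the virtue of being independent of external references.

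One minor caveat: your claim that the $|\bx|^{-(d+1)}$ decay ``is more than enough to pair against a locally integrable $\bF$'' is not literally true for arbitrary $\bF\in L^1_{loc}$ without a growth restriction at infinity, but this looseness is inherited from the statement itself and the paper does not address it either.
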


\begin{proof} The conclusion of the lemma can be deduced from \cite[Lemma 3.3]{Martell-M32016} where it is shown that similar properties are enjoyed by the Poisson kernel for the Lam\'e system. To be specific, given constants $\mu$, $\lambda$ satisfying $3 \mu + \lambda > 0$, $\mu + \lambda > 0$, the matrix kernel  $\bbK : \bbR^d \to \bbM_{d+1}(\bbR)$ defined by
$$\bbK(\bx) := \frac{4 \mu}{3\mu + \lambda} \frac{1}{\omega_d} \frac{1}{(|\bx|^2+1)^{\frac{d+1}{2}}} \bbI_{d+1} + \frac{ \mu + \lambda}{3\mu + \lambda} \frac{2(d+1)}{\omega_d} \frac{(\bx,1) \otimes (\bx,1)}{(|\bx|^2+1)^{\frac{d+3}{2}}}$$
is shown to be the Poisson kernel for the Lam\'e system in the upper half space, see \cite[Lemma 5.1]{Martell-M32014}. 
In addition the scaled kernel  $\bbK_t(\bx) := t^{-d} \bbK(\bx/t)$ is shown be an approximation to the identity. As a consequence, to prove the above lemma  it suffices to note  
that for every $\bx \in \bbR^d$, $t > 0$,
\begin{equation}\label{eq-RelationBetweenKandPinProof}
\frac{\mu + \lambda}{3 \mu + \lambda} \mathbb{P}_t(\bx) = \bbK_t(\bx) - \frac{2 \mu}{3 \mu + \lambda} p_t(\bx) \bbI_{d+1}, 
\end{equation}
where we recall that $p_t(\bx)$ the Poisson kernel of the Laplacian in the upper half space.  For each $t> 0$, 
we can now integrate both sides of \eqref{eq-RelationBetweenKandPinProof} to get
\[
\frac{\mu + \lambda}{3 \mu + \lambda} \int_{\bbR^d}{ \mathbb{P}_t(\bx)}{\, \mathrm{d}\bx} = \int_{\bbR^d}{\bbK_t(\bx)}{\, \mathrm{d}\bx} - \frac{2 \mu}{3 \mu + \lambda} \left( \int_{\bbR^d}{p_t(\bx)}{\, \mathrm{d}\bx} \, \right) \bbI_{d+1}
\] 
which yields 
$
\int_{\bbR^d}{\mathbb{P}_t(\bx)}{\mathrm{d}\bx} = \frac{3 \mu + \lambda}{\mu + \lambda} \left( 1 - \frac{2 \mu}{3 \mu + \lambda} \right) \bbI_{d+1} = \bbI_{d+1}\,.
$

The smoothness and the convergences of the matrix convolutions also follow from the same results for $\bbK$ and  $p_t$. It can also be easily verified using Minkowski's inequality and the Lebesgue dominated convergence theorem as follows: 
\[
\begin{split}
\Vnorm{\mathbb{P}_t \, \ast \, \bF - \bF}_{L^p(\bbR^d)}^p &= \int_{\bbR^d}{\left| \int_{\bbR^d}{\mathbb{P}_t(\by) (\bF(\bx-\by) - \bF(\bx))}{\, \mathrm{d} \by} \right|^p}{\, \mathrm{d}\bx} \\
&\leq C(d)\int_{\bbR^d}{|\mathbb{P}_t(\by)| \Vnorm{\tau_{\by} \bF - \bF}_{L^p(\bbR^d)}^p }{\, \mathrm{d}\by}\\
&= C(d)\int_{\bbR^d}{|\mathbb{P}(\by)| \Vnorm{\tau_{t \by} \bF - \bF}_{L^p(\bbR^d)}^p }{\, \mathrm{d}\by} \to 0\,.
\end{split}
\]
where $\tau_{\by} \bF = \bF(\bx-\by)$. 
The integrand in the last term is bounded by the  $L^{1}$ function  $C \, |\mathbb{P}(\by)| \,  \|\bF\|_{L^{p}}^{p}$, and for each $\by$ the integrand converges to zero by  continuity of translations in $L^p$. 
\end{proof}
\begin{remark}
A connection between $\bbP_t$ and the semi-norm $|\cdot|_{\mathcal{X}^{s}_{p}}$ is obtained through the following important relation that we use below. 
For any $\bz$, $\bx\in \bbR^{d}$, we have  
\[
\bbP_t(\bx) \begin{bmatrix} \bz\\0\end{bmatrix}  =  \overline{\bP}(\bx, t)  \left(\bz \cdot \frac{\bx}{|\bx|}\right) \,,\qquad \bz \in \bbR^d\,, 
\]
where the vector function $\overline{\bP}(\bx, t)$ is given by 
$
\overline{\bP}(\bx, t):= \frac{2(d+1)}{\omega_d} \frac{t|\bdx|}{(|\bx|^2 + t^2)^{\frac{d+3}{2}}} \begin{bmatrix} \bx\\t\end{bmatrix}
$.
As a consequence of this and the approximation to the identity result, we see that if $\bF = (\bff, 0)$, then 
\[
\begin{split}
\, \qquad \bU(\bx,t)  &= \bF(\bdx)  + \int_{\mathbb{R}^{d}} \bbP_t (\bdy)(\bF(\bdx + \bdy) - \bF(\bdx)) \, \mathrm{d}\bdy =  \bF(\bdx)  + \int_{\mathbb{R}^{d}} \overline{\bP}(\bdy, t)(\bff(\bdx + \bdy) - \bff(\bdx))\cdot {\bdy\over |\bdy|} \, \mathrm{d}\bdy\,.
 \end{split}
\]
\end{remark}
The matrix Poisson kernel $\mathbb{P}_t$ also satisfies a semigroup property as documented in the next lemma.  The following arguments rely centrally on an explicit formula for the Fourier transform of $\mathbb{P}_t$.
\begin{lemma}
 For each $t>0$, the Fourier transform of $\mathbb{P}_{t}(\bx)$ is given by 
\begin{equation}\label{eq-FourierTransformOfPoissonTypeKernel}
\cF_{\bx}({\mathbb{P}_t})(\bfxi) := \e^{- 2 \pi |\bfxi| t} \left( \bbI_{d+1} + (2 \pi |\bfxi| t)
	\begin{bmatrix}
		- \frac{\bfxi \otimes \bfxi}{|\bfxi|^2} & -\imath \frac{\bfxi}{|\bfxi|} \\
		-\imath \frac{\bfxi}{|\bfxi|} & 1 \\
	\end{bmatrix}
\right)\,.
\end{equation}
As a consequence $\mathbb{P}_t$ satisfies the semigroup property: for every  $t_1, t_2 > 0\,$ 
\[
\mathbb{P}_{t_1} \, \ast \, \mathbb{P}_{t_2} = \mathbb{P}_{t_1 + t_2}\,, 
\]
where the convolution is understood in the sense of matrix multiplication. 
\end{lemma}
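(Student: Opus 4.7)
My plan is to prove the Fourier transform formula \eqref{eq-FourierTransformOfPoissonTypeKernel} by a direct entry-by-entry computation that reduces every component of $\mathbb{P}_t$ to a combination of standard operations applied to the classical Poisson kernel $p_t$, and then to deduce the semigroup property by multiplying symbols and observing that the resulting matrix factor is nilpotent of order two. The first step is to rewrite each entry of $\mathbb{P}_t$ in terms of $p_t$ and its first derivatives; a short calculation starting from \eqref{Poisson} and \eqref{sym-oned-poisson-parts} yields
\begin{align*}
\mathfrak{p}_t^{d+1,d+1}(\bx) &= p_t(\bx) - t\,\partial_t p_t(\bx),\\
\mathfrak{p}_t^{d+1,k}(\bx) = \mathfrak{p}_t^{k,d+1}(\bx) &= -t\,\partial_{x_k} p_t(\bx), \qquad 1 \le k \le d,\\
\mathfrak{p}_t^{j,k}(\bx) &= -x_k\,\partial_{x_j} p_t(\bx), \qquad 1 \le j,k \le d.
\end{align*}
The first identity is obtained by writing $|\bx|^2 = (|\bx|^2+t^2) - t^2$ inside $\partial_t p_t$; the other two come from the observation that $\partial_{x_j}[(|\bx|^2+t^2)^{-(d+1)/2}]$ already supplies the factor of $x_j$ appearing in $\mathfrak{p}_t^{j,d+1}$ and $\mathfrak{p}_t^{jk}$.

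Next I would Fourier-transform each entry using $\cF(p_t)(\bfxi) = e^{-2\pi|\bfxi|t}$ together with the familiar rules $\cF(\partial_{x_j} g) = 2\pi\imath\xi_j\,\cF(g)$, $\cF(\partial_t g) = \partial_t \cF(g)$, and $\cF(x_k g)(\bfxi) = -(2\pi\imath)^{-1}\partial_{\xi_k}\cF(g)(\bfxi)$. The only entry requiring care is the top-left $d\times d$ block: applying the last two rules to $\mathfrak{p}_t^{jk} = -x_k\partial_{x_j} p_t$ and computing $\partial_{\xi_k}\!\bigl(\xi_j e^{-2\pi|\bfxi|t}\bigr)$ yields
\[
\cF(\mathfrak{p}_t^{j,k})(\bfxi) = e^{-2\pi|\bfxi|t}\Bigl[\delta_{jk} - (2\pi|\bfxi|t)\,\frac{\xi_j\xi_k}{|\bfxi|^2}\Bigr],
\]
which is exactly the $(j,k)$-entry on the right-hand side of \eqref{eq-FourierTransformOfPoissonTypeKernel}; the remaining entries line up with the asserted matrix by inspection.

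With \eqref{eq-FourierTransformOfPoissonTypeKernel} in hand, the semigroup property is immediate. Let $M(\bfxi)$ denote the matrix in brackets in \eqref{eq-FourierTransformOfPoissonTypeKernel}, and set $a = 2\pi|\bfxi|t_1$, $b = 2\pi|\bfxi|t_2$. Since $\mathbb{P}_t \in L^1(\mathbb{R}^d;\bbM_{d+1}(\mathbb{R}))$, Fubini applied entry-wise gives
\[
\cF(\mathbb{P}_{t_1}\ast\mathbb{P}_{t_2})(\bfxi) = e^{-(a+b)}\bigl(\bbI_{d+1} + aM(\bfxi)\bigr)\bigl(\bbI_{d+1} + bM(\bfxi)\bigr) = e^{-(a+b)}\bigl(\bbI_{d+1} + (a+b)M(\bfxi) + ab\,M(\bfxi)^2\bigr).
\]
A short block calculation shows $M(\bfxi)^2 = 0$: writing $A = -\bfxi\otimes\bfxi/|\bfxi|^2$, the top-left $d\times d$ block of $M^2$ is $A^2 + (-\imath\bfxi/|\bfxi|)(-\imath\bfxi/|\bfxi|)^T = -A + A = 0$, using $A^2 = -A$ (since $-A$ is the rank-one orthogonal projection onto $\bfxi$) and $(-\imath\bfxi/|\bfxi|)(-\imath\bfxi/|\bfxi|)^T = A$; the other three blocks cancel analogously. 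The resulting expression coincides with $\cF(\mathbb{P}_{t_1+t_2})(\bfxi)$, and Fourier inversion delivers $\mathbb{P}_{t_1}\ast\mathbb{P}_{t_2} = \mathbb{P}_{t_1+t_2}$. The main bookkeeping hurdle lies in the first step, specifically the top-left block; once the representation $\mathfrak{p}_t^{jk} = -x_k\partial_{x_j} p_t$ is in place and the Fourier differentiation is carried out correctly, the rest---including the semigroup identity, which hinges on the one-line fact $M(\bfxi)^2 = 0$---is routine.
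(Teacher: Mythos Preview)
Your proof is correct, and the semigroup step is essentially identical to the paper's: both reduce to multiplying symbols and observing that the bracketed matrix squares to zero.

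Where you genuinely diverge is in establishing \eqref{eq-FourierTransformOfPoissonTypeKernel} itself. The paper's appendix treats each block of $\mathbb{P}_t$ as a function of $(\bx,t)\in\bbR^{d+1}$, takes the full $(d{+}1)$-dimensional Fourier transform (invoking the tempered-distribution identity $\cF(x_jx_k/|\bx|^n)$ on $\bbR^{d+1}$), and then recovers the $d$-dimensional transform by applying a one-dimensional inverse Fourier transform in the variable dual to $t$, using the explicit formulas for $\cF_\eta\!\big(\eta/(|\bfxi|^2+\eta^2)\big)$ and related quantities. Your route stays entirely in $\bbR^d$: you notice that every entry of $\mathbb{P}_t$ is either a first derivative of $p_t$ or a coordinate multiple of such a derivative, and then you read off the symbol from $\cF(p_t)=\e^{-2\pi|\bfxi|t}$ together with the standard rules $\cF(\partial_{x_j}g)=2\pi\imath\xi_j\cF(g)$ and $\cF(x_kg)=-(2\pi\imath)^{-1}\partial_{\xi_k}\cF(g)$. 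This is considerably more elementary---no $(d{+}1)$-dimensional distributional identities, no partial inverse transforms---and the key observation $\mathfrak{p}_t^{jk}=-x_k\partial_{x_j}p_t$ makes the top-left block a one-line computation rather than the page it occupies in the appendix. The paper's approach, on the other hand, is more systematic in that it does not require spotting those particular algebraic identities in advance; it would generalize more readily to kernels not built so transparently from $p_t$.
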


\begin{proof} 
To preserve the flow of the presentation in this section, the Fourier transform of $\mathbb{P}_t$ is computed in the appendix.  To prove the semigroup property of $\mathbb{P}_t $ we use the property of convolution and the explicit Fourier transform formula given in \eqref{eq-FourierTransformOfPoissonTypeKernel}.   We carry out this calculation via matrix multiplication. For any positive $t_1$, $t_2$ we have that 
\[
\begin{split}
	\cF_{\bx} \left( \mathbb{P}_{t_1} \, \ast \, \mathbb{P}_{t_2} \right)(\bfxi) &= \cF_{\bx}(\mathbb{P}_{t_1})(\bfxi) \, . \, \cF_{\bx}(\mathbb{P}_{t_2})(\bfxi) \\ 
	&= \e^{-2 \pi |\bfxi| (t_1 + t_2)} \left( \bbI_{d+1} + (2 \pi |\bfxi| t_1) 
		\begin{bmatrix}
		- \frac{\bfxi \otimes \bfxi}{|\bfxi|^2} & -\imath \frac{\bfxi}{|\bfxi|} \\
		-\imath \frac{\bfxi}{|\bfxi|} & 1 \\
		\end{bmatrix} + (2 \pi |\bfxi| t_2) 
		\begin{bmatrix}
		- \frac{\bfxi \otimes \bfxi}{|\bfxi|^2} & -\imath \frac{\bfxi}{|\bfxi|} \\
		-\imath \frac{\bfxi}{|\bfxi|} & 1 \\
		\end{bmatrix} \right) \\
		&=  \cF_{\bx}(\mathbb{P}_{t_1 + t_2})(\bfxi)\,, 
		\end{split}
		\]
where in the second equality we have multiplied the matrix of Fourier symbols and have also used the fact that  
$\begin{bmatrix}
	- \frac{\bfxi \otimes \bfxi}{|\bfxi|^2} & -\imath \frac{\bfxi}{|\bfxi|} \\
		-\imath \frac{\bfxi}{|\bfxi|} & 1 \\
\end{bmatrix}^2 = {\bfs 0}\,$, the zero matrix,  which can be verified easily by computation. 
\end{proof}
The next lemma summarizes integrability properties of the first derivatives of $\bbP_{t}$ that we will be using later. The proof is purely computational and can be done following similar calculations for the Poisson kernel. We omit it here.
\begin{lemma}\label{integrable-PType}
For each $j$, $k$, and $\ell \in \{ 1, \ldots, d, d+1 \}$ and for every $t > 0$ we have that $\p_t \mathfrak{p}^{jk}_t(\bx) \in L^1(\bbR^d)$ and $\p_{x_\ell} \mathfrak{p}^{jk}_t(\bx) \in L^1(\bbR^d)$.  In addition we have the following pointwise estimates: There exists a constant $c = c(d) > 0$ such that
\[
|\p_t \mathfrak{p}^{jk}_t(\bx)| \leq c \,|\bx|^{-d-1}\,, \qquad |\p_t \mathfrak{p}^{jk}_t(\bx)| \leq c\, t^{-d-1}\,, \qquad \bx \in \bbR^{d}\,, \quad t > 0\,, 
\]
for any $j, k = 1, 2, \dots d+1$.
\end{lemma}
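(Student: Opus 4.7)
The plan is to reduce both the $L^1$ bounds and the pointwise estimates to straightforward calculus on the ``time-one'' kernel $\mathfrak{p}^{jk}$, exploiting the scaling $\mathbb{P}_t(\bx) = t^{-d} \mathbb{P}(\bx/t)$. First, differentiating this identity component-wise gives
\[
\p_t \mathfrak{p}^{jk}_t(\bx) = -d\, t^{-d-1}\, \mathfrak{p}^{jk}(\bx/t) \;-\; t^{-d-2}\, \bx \cdot (\nabla \mathfrak{p}^{jk})(\bx/t), \qquad \p_{x_\ell} \mathfrak{p}^{jk}_t(\bx) = t^{-d-1}\, (\p_\ell \mathfrak{p}^{jk})(\bx/t).
\]
The change of variable $\by = \bx/t$ (which produces a factor $t^d$) then reduces both $L^1$-norms in $\bx$ to constant multiples of $t^{-1}$ times the $L^1$-norms in $\by$ of $\mathfrak{p}^{jk}(\by)$, $\by \cdot \nabla \mathfrak{p}^{jk}(\by)$, and $\p_\ell \mathfrak{p}^{jk}(\by)$. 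In particular, integrability for a single fixed $t>0$ follows once these three time-one integrals are finite, and we automatically get $\|\p_t \mathfrak{p}^{jk}_t\|_{L^1} , \|\p_{x_\ell} \mathfrak{p}^{jk}_t\|_{L^1} \leq C(d)/t$, matching the estimates already recorded for the classical Poisson kernel.

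Next, I would verify those three time-one $L^1$-norms by reading the decay at infinity directly from the formula \eqref{sym-oned-poisson}. The numerator $(\by,1)_j (\by,1)_k$ is a polynomial of degree at most $2$ in $\by$, while the denominator $(|\by|^2+1)^{(d+3)/2}$ grows like $|\by|^{d+3}$ at infinity, so $|\mathfrak{p}^{jk}(\by)| \leq C(d)\,(1+|\by|)^{-(d+1)}$, which is integrable on $\bbR^d$. Each Cartesian derivative costs one extra power of $|\by|$ in the denominator via the quotient rule, giving $|\nabla \mathfrak{p}^{jk}(\by)| \leq C(d)\,(1+|\by|)^{-(d+2)}$; consequently both $\p_\ell \mathfrak{p}^{jk}(\by)$ and $\by \cdot \nabla \mathfrak{p}^{jk}(\by)$ lie in $L^1(\bbR^d)$.

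For the pointwise estimates on $\p_t \mathfrak{p}^{jk}_t$, I would compute directly from \eqref{sym-oned-poisson-parts} and bookkeep powers of $r := \sqrt{|\bx|^2+t^2}$. The numerator $t\,(\bx, t)_j (\bx, t)_k$ is a polynomial in $(\bx,t)$ of degree exactly $3$, hence of size at most $r^3$, and the denominator is $r^{d+3}$. Applying $\p_t$ via the product and quotient rules produces a finite sum of terms, each either a polynomial in $(\bx,t)$ of degree at most $2$ divided by $r^{d+3}$, or of degree at most $4$ divided by $r^{d+5}$; in both cases the modulus is controlled by $c(d)\, r^{-(d+1)}$. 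The two pointwise bounds then follow from $r \geq |\bx|$ and $r \geq t$.

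I do not expect a serious obstacle here; the argument is a clean application of the scaling symmetry together with rational-function bookkeeping on an explicit kernel. The only small care point is that $\by \cdot \nabla \mathfrak{p}^{jk}(\by)$ loses a power of $|\by|$ in the numerator relative to $\nabla \mathfrak{p}^{jk}$, so its integrability depends on the sharper decay $(1+|\by|)^{-(d+2)}$ for $\nabla \mathfrak{p}^{jk}$, which the explicit formula \eqref{sym-oned-poisson} supplies.
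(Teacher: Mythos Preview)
Your proposal is correct and matches what the paper intends: the paper omits the proof entirely, stating only that it ``is purely computational and can be done following similar calculations for the Poisson kernel,'' and your scaling-plus-quotient-rule computation is precisely such a calculation. Your bookkeeping of degrees in $(\bx,t)$ against powers of $r=(|\bx|^2+t^2)^{1/2}$ is accurate and yields the claimed $L^1$ and pointwise bounds.
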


\subsection{Norm equivalence of Poisson integrals}
We begin first by establishing relations between the Poisson integrals obtained from $p_t$ and $\mathbb{P}_{t}$. 
\begin{lemma}\label{prop-identity}
Suppose that $\bF = (\bff, 0) \in \cS(\bbR^d;\mathbb{R}^{d+1})$. Then for every $t > 0$ both Poisson integrals $\bu(\bx,t) = p_t \ast \bF(\bx) $ and $\bU(\bx,t)  = \mathbb{P}_{t}\ast \bF(\bx)$ are in $\cS(\bbR^d;\mathbb{R}^{d+1})$. Moreover we have the following relations between ${\bf u}$ and ${\bf U}$. 
\begin{itemize}
\item For any $j=1, \dots, d,$ we have  
\begin{equation}\label{eq-IdentityForComparisonOfuandU}
{U_j}(\bx,t)  = {u_j}(\bx,t) + {R_j(U_{d+1})}(\bx,t)\,,
\qquad  \bx\in\mathbb{R}^{d}\,, \quad t > 0\,.
\end{equation}
\item 
\begin{equation}\label{eq-IdentityForComparisonOfDerivativesOfUd+1}
\p_t U_{d+1}(\bx,t) = -\mathrm{div}_{\bx}{\bu}(\bx,t) - \sum_{j=1}^d R_j  (\p_{x_j} U_{d+1})(\bx,t)\,, \qquad  \bx\in\mathbb{R}^{d}\,, \quad t > 0\,.
\end{equation}
\item For any $j=1, \dots, d,$ we have   \begin{equation}\label{eq-IdentityForComparisonOfDerivativesOfUd+1-IV}
\p_{x_j}U_{d+1}(\bx,t) = R_j \left( \p_t U_{d+1} + \sum_{\ell=1}^d R_{\ell} \left( \p_t u_{\ell} \right) \right) (\bx,t)\,, \qquad  \bx\in\mathbb{R}^{d}\,, \quad t > 0\,,
\end{equation}
\end{itemize}
where $R_j$ is the $j^{th}$ Riesz transform.  
\end{lemma}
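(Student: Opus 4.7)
The three identities are linear relations between Poisson-type integrals and Riesz transforms of the ordinary Poisson integral, and every operator in sight — convolution by $p_t$, convolution by $\mathbb{P}_t$, and each $R_j$ — acts as a Fourier multiplier in $\bx$. The natural plan is therefore to verify each identity on the Fourier side in the $\bx$ variable and then invert. The inputs are already available: the matrix symbol of $\mathbb{P}_t$ in \eqref{eq-FourierTransformOfPoissonTypeKernel}, the scalar symbol $\cF_{\bx}(p_t)(\bfxi)=\e^{-2\pi|\bfxi|t}$, and $\cF_{\bx}(R_j f)(\bfxi)=-\imath(\xi_j/|\bfxi|)\hat{f}(\bfxi)$.

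Since $\bF=(\bff,0)$, only the first $d$ columns of $\cF_{\bx}(\mathbb{P}_t)$ contribute to $\cF_{\bx}\bU$. Reading off the matrix-vector product from \eqref{eq-FourierTransformOfPoissonTypeKernel} gives, for each $j\in\{1,\dots,d\}$,
\[\cF_{\bx}U_j(\bfxi,t)=\e^{-2\pi|\bfxi|t}\hat{f}_j(\bfxi)-\frac{2\pi t\,\xi_j}{|\bfxi|}\e^{-2\pi|\bfxi|t}\sum_{k=1}^{d}\xi_k\hat{f}_k(\bfxi),\]
\[\cF_{\bx}U_{d+1}(\bfxi,t)=-2\pi\imath t\,\e^{-2\pi|\bfxi|t}\sum_{k=1}^{d}\xi_k\hat{f}_k(\bfxi).\]
For \eqref{eq-IdentityForComparisonOfuandU}, multiplying $\cF_{\bx}U_{d+1}$ by the Riesz symbol $-\imath\xi_j/|\bfxi|$ reproduces exactly the second summand above, while the first summand is $\cF_{\bx}u_j$; Fourier inversion is all that remains.

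For \eqref{eq-IdentityForComparisonOfDerivativesOfUd+1}, differentiating the expression for $\cF_{\bx}U_{d+1}$ in $t$ produces two summands: a ``local'' one $-2\pi\imath\e^{-2\pi|\bfxi|t}\sum_k\xi_k\hat{f}_k$, which matches $-\cF_{\bx}(\divergence_{\bx}\bu)$ since $\cF_{\bx}\partial_{x_j}u_j=2\pi\imath\xi_j\e^{-2\pi|\bfxi|t}\hat{f}_j$, and a remainder $4\pi^2\imath t|\bfxi|\e^{-2\pi|\bfxi|t}\sum_k\xi_k\hat{f}_k$, which matches the symbol of $-\sum_j R_j\partial_{x_j}$ applied to $U_{d+1}$ via $\sum_j(-\imath\xi_j/|\bfxi|)(2\pi\imath\xi_j)=2\pi|\bfxi|$. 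For \eqref{eq-IdentityForComparisonOfDerivativesOfUd+1-IV} I would apply $\partial_{x_j}$ to $\cF_{\bx}U_{d+1}$ to obtain $4\pi^2 t\xi_j\e^{-2\pi|\bfxi|t}\sum_k\xi_k\hat{f}_k$ on the left, and on the right use $\cF_{\bx}(R_\ell\partial_t u_\ell)=2\pi\imath\xi_\ell\e^{-2\pi|\bfxi|t}\hat{f}_\ell$ together with the previous step; the two ``local'' summands $-2\pi\imath\e^{-2\pi|\bfxi|t}\sum_k\xi_k\hat{f}_k$ (from $\partial_tU_{d+1}$) and $+2\pi\imath\e^{-2\pi|\bfxi|t}\sum_\ell\xi_\ell\hat{f}_\ell$ (from the Riesz sum) cancel, leaving $4\pi^2\imath t|\bfxi|\e^{-2\pi|\bfxi|t}\sum_k\xi_k\hat{f}_k$, and applying $-\imath\xi_j/|\bfxi|$ recovers the target.

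The claim that $\bu,\bU\in\cS$ is a routine preliminary: smoothness in $\bx$ and $t>0$ follows from Lemma \ref{thm-PropertiesOfPoissonTypeKernelP} together with its classical analogue, and for fixed $t$ the rapid decay in $\bx$ is inherited from the Schwartz datum against the kernels. No step is conceptually hard; the entire argument is symbol bookkeeping on the Fourier side, and the only place where real care is required is tracking signs and factors of $\imath$ through the matrix-vector product \eqref{eq-FourierTransformOfPoissonTypeKernel}.
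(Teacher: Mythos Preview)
Your proposal is correct and follows essentially the same approach as the paper: both arguments compute $\cF_{\bx}U_j$ and $\cF_{\bx}U_{d+1}$ from the explicit matrix symbol \eqref{eq-FourierTransformOfPoissonTypeKernel} applied to $\bF=(\bff,0)$, and then verify each of the three identities by matching Fourier symbols term by term, with the cancellation you describe for \eqref{eq-IdentityForComparisonOfDerivativesOfUd+1-IV} being exactly the paper's observation \eqref{eq-IdentityForComparisonOfDerivativesOfUd+1-III}. The only difference is presentational --- the paper writes out the intermediate rearrangements more explicitly --- but the substance is identical.
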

\begin{proof}
We prove first the identity \eqref{eq-IdentityForComparisonOfuandU}. For a fixed $t > 0$, since all the functions  involved are in $\mathcal{S}(\bbR^{d})$, it suffices to check that  the Fourier transform of the right-hand side agrees with that of the left-hand side in \eqref{eq-IdentityForComparisonOfuandU}. From the definition of $\bU$, we see that for any $t > 0$,  $\cF_{\bx}(\bU (\cdot, t))(\bfxi) = \cF(\mathbb{P}_{t})(\bfxi){\cF(\bF)}(\bfxi)$. Now from the particular form of $\bF$ and using the explicit formula \eqref{eq-FourierTransformOfPoissonTypeKernel} for the Fourier transform of $\bbP_t$ we see that for any $j=1, \dots, d$ we have 
\begin{equation}\label{jth}
\cF_{\bx}({U_j})(\bfxi, t) = \e^{- 2 \pi |\bfxi| t}  \cF({f_j})(\bfxi) + \e^{- 2 \pi |\bfxi| t} (2 \pi |\bfxi| t) \left( - \frac{\xi_j}{|\bfxi|} \right) \left( \frac{\bfxi}{|\bfxi|} \cdot \cF({\bff})(\bfxi) \right)  \\
\end{equation}
and that after simplification $\cF_{\bx}({U_{d+1}})(\bfxi, t) = -\imath2 \pi t \,\e^{- 2 \pi |\bfxi| t} \bfxi \cdot \cF({\bff})(\bfxi)  $. 
To complete  the proof of the identity  \eqref{eq-IdentityForComparisonOfuandU} we notice that the first term  in \eqref{jth} is precisely $\cF_{\bx}(u_j)(\bfxi, t)$, whereas the second term can be rewritten to obtain 
\[
\begin{split}
\e^{- 2 \pi |\bfxi| t} (2 \pi |\bfxi| t) \left( - \frac{\xi_j}{|\bfxi|} \right) \left( \frac{\bfxi}{|\bfxi|} \cdot \cF({\bff})(\bfxi) \right) &= \left( -\imath\frac{\xi_j}{|\bfxi|} \right) \left( -\imath(2 \pi t) \e^{- 2 \pi |\bfxi| t} \left(  \bfxi \cdot \cF({\bff})(\bfxi) \right) \right) \\
&= \cF_{\bx}\left({R_j(U_{d+1})}\right)(\bfxi,t)\,.
\end{split}
\]
Let us proceed to show \eqref{eq-IdentityForComparisonOfDerivativesOfUd+1}. Using a direct calculation and some rearrangement we get 
\begin{equation} \label{tder-Udplus1}
\begin{split}
\p_t \cF_{\bx}({U_{d+1}})(\bfxi,t) &= (-\imath2 \pi ) \e^{-2 \pi |\bfxi| t} \bfxi \cdot \cF({\bff})(\bfxi)  +\imath(4 \pi^2 |\bfxi| t) \e^{-2 \pi |\bfxi| t} \bfxi \cdot \cF({\bff})(\bfxi)\,.
\end{split}
\end{equation}
The identity follows easily once we realize that the first term of \eqref{tder-Udplus1} can be rewritten as 
\[(-\imath2 \pi ) \e^{-2 \pi |\bfxi| t} \bfxi \cdot \cF({\bff})(\bfxi) 
= -2 \pi \imath \bfxi \cdot \e^{-2 \pi |\bfxi'| t}\cF({\bff})(\bfxi) = -\cF_{\bx}\left(\text{div}_{\bx}{\bu}(\cdot, t)\right)(\bfxi)\,,   \]
while the second term in \eqref{tder-Udplus1} can also be rewritten as 
\[
\begin{split}
\imath(4 \pi^2 |\bfxi| t) \e^{-2 \pi |\bfxi| t} \bfxi \cdot \cF({\bff})(\bfxi) &=(2 \pi |\bfxi|)(\imath2 \pi t) \e^{-2 \pi \bfxi t} \left(  \bfxi \cdot \cF({\bff})(\bfxi) \right)\\
 &=  -\sum_{j=1}^d  \left( -\imath \frac{\xi_j}{|\bfxi|} \right) 2 \pi \imath \xi_j  (-\imath2 \pi t) \e^{-2 \pi |\bfxi| t} \left(  \bfxi \cdot \cF({\bff})(\bfxi) \right)\\
&=  -\cF\left( \sum_{j=1}^{d} R_{j}\left(\partial_{j}U_{d+1}\right)\right) (\bfxi, t)\,.
\end{split}
\]
Next we prove the identity \eqref{eq-IdentityForComparisonOfDerivativesOfUd+1-IV}. Again, by a direct calculation
\begin{equation}\label{eq-IdentityForComparisonOfDerivativesOfUd+1-II}
\begin{split}
\cF({\p_{x_j} U_{d+1}})(\bfxi,t) &= (\imath2 \pi  \xi_j)(-\imath2 \pi t) \e^{-2 \pi |\bfxi| t} \bfxi\cdot \cF({\bff})(\bfxi) = \left( -\imath \frac{\xi_j}{|\bfxi|} \right) (4 \pi^2 |\bfxi| t) \e^{-2 \pi |\bfxi| t} \left( \imath \bfxi \cdot \cF({\bff})(\bfxi) \right)\,.
\end{split}
\end{equation}
We need to connect the last expression in \eqref{eq-IdentityForComparisonOfDerivativesOfUd+1-II} with  $\p_t U_{d+1}$. To do so, we observe from \eqref{tder-Udplus1} that 
\begin{equation}\label{eq-IdentityForComparisonOfDerivativesOfUd+1-III}
(4 \pi^2 |\bfxi|^2 t) \e^{-2 \pi |\bfxi| t} \left( \imath \frac{\bfxi}{|\bfxi|} \cdot \cF({\bff})(\bfxi) \right) =  \cF_{\bx}(\p_{t} U_{d+1})(\bfxi,t) + \cF \left( \sum_{k=1}^d R_k \left[ \p_t u_k(\cdot,t) \right] \right)(\bfxi)\,,
\end{equation}
where the last term is a rewriting of the expression $(-\imath2 \pi ) \e^{-2 \pi |\bfxi| t} \bfxi \cdot \cF({\bff})(\bfxi)$ in \eqref{tder-Udplus1}. 
Substituting this into \eqref{eq-IdentityForComparisonOfDerivativesOfUd+1-II}  we get the desired result. 
\end{proof}
\begin{proposition}\label{lpcomparisonDeruwithDerU}
Let $1 < p < \infty$. Let $\bff \in L^p(\bbR^d;\bbR^d)$. Set $\bF = (\bff, 0) \in L^{p}(\bbR^d;\mathbb{R}^{d+1})$, $\bu(\bx,t) = p_t \ast \bF(\bx) $ and set $\bU(\bx,t)  = \mathbb{P}_{t}\ast \bF(\bx)$. Then there exists a positive constant $C = C(d, p)$ such that for any $t > 0$ we have 
\begin{equation}\label{eq-KornsForPoissonIntegrals-dtj}
\Vnorm{\p_t \bu(\cdot,t)}_{L^p(\bbR^d)} \leq C \Vnorm{\p_t \bU(\cdot,t)}_{L^p(\bbR^d)} 
\end{equation}
 and for each $k=1, \dots, d$ we have 
\begin{equation}\label{eq-KornsForPoissonIntegrals-Dxj}
 \Vnorm{\p_{x_k} \bu(\cdot,t)}_{L^p(\bbR^d)} \leq C  \Vnorm{\p_{x_k} \bU(\cdot,t)}_{L^p(\bbR^d)}\,.
\end{equation}
\end{proposition}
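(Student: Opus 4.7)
The idea is to solve the pointwise identity \eqref{eq-IdentityForComparisonOfuandU} for the components of $\bu$ in terms of the components of $\bU$, apply the derivative operators $\p_t$ and $\p_{x_k}$, and then invoke the $L^p$-boundedness of the Riesz transforms recalled in Section 2. Because $\bF = (\bff,0)$, the last component $u_{d+1} = p_t \ast 0 \equiv 0$, so both $\p_t \bu$ and $\p_{x_k} \bu$ have vanishing $(d+1)$-th entry, and it suffices to control the first $d$ components, i.e.\ to bound $\sum_{j=1}^{d} \|\p_t u_j(\cdot,t)\|_{L^p}$ and $\sum_{j=1}^{d} \|\p_{x_k} u_j(\cdot,t)\|_{L^p}$.

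For $\bff \in \cS(\bbR^d;\bbR^d)$, Lemma \ref{prop-identity} gives $u_j(\bx,t) = U_j(\bx,t) - R_j(U_{d+1})(\bx,t)$ for $j=1,\ldots,d$. Since the Fourier multiplier $-\imath \xi_j/|\bfxi|$ of $R_j$ is independent of $t$ and commutes with multiplication by $2\pi \imath \xi_k$, the operator $R_j$ commutes with $\p_t$ and with each $\p_{x_k}$, so
\begin{equation*}
\p_t u_j = \p_t U_j - R_j(\p_t U_{d+1}), \qquad \p_{x_k} u_j = \p_{x_k} U_j - R_j(\p_{x_k} U_{d+1}).
\end{equation*}
The triangle inequality combined with the bound $\|R_j \phi\|_{L^p(\bbR^d)} \leq C(p) \|\phi\|_{L^p(\bbR^d)}$ then yields
\begin{equation*}
\|\p_t u_j(\cdot,t)\|_{L^p} \leq \|\p_t U_j(\cdot,t)\|_{L^p} + C(p)\|\p_t U_{d+1}(\cdot,t)\|_{L^p} \leq C(d,p)\|\p_t \bU(\cdot,t)\|_{L^p},
\end{equation*}
and an identical estimate for $\p_{x_k} u_j$. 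Summing over $j=1,\ldots,d$ establishes \eqref{eq-KornsForPoissonIntegrals-dtj} and \eqref{eq-KornsForPoissonIntegrals-Dxj} for all Schwartz inputs.

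To remove the Schwartz assumption, I would approximate $\bff \in L^p(\bbR^d;\bbR^d)$ by a sequence $\bff^{(n)} \in \cS(\bbR^d;\bbR^d)$ with $\bff^{(n)} \to \bff$ in $L^p$. For each fixed $t > 0$, Lemma \ref{integrable-PType} shows that $\p_t \bbP_t$ and $\p_{x_k} \bbP_t$ lie in $L^1(\bbR^d;\bbM_{d+1}(\bbR))$, and the analogous $L^1$-integrability of $\p_t p_t$ and $\p_{x_k} p_t$ is classical and already recalled in Section 2. Young's convolution inequality therefore provides continuity in $L^p$ of each of the maps $\bff \mapsto \p_t \bu(\cdot,t),\ \p_{x_k} \bu(\cdot,t),\ \p_t \bU(\cdot,t),\ \p_{x_k} \bU(\cdot,t)$, and the inequalities proved for $\bff^{(n)}$ pass to the limit to give the statement for all $\bff \in L^p$. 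The main potential obstacle, namely the need to express the derivatives of the scalar Poisson integral in terms of the matrix Poisson integral, is essentially absent here: it has been resolved by Lemma \ref{prop-identity} via Fourier analysis, and what remains is a direct application of Riesz transform boundedness followed by a standard density argument.
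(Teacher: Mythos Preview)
Your proposal is correct and follows essentially the same approach as the paper: differentiate the identity $u_j = U_j - R_j(U_{d+1})$ from Lemma \ref{prop-identity} in $t$ and in $x_k$, apply the $L^p$-boundedness of the Riesz transforms together with their commutation with derivatives, sum over $j$, and then pass from Schwartz to general $L^p$ data via Lemma \ref{integrable-PType} and Young's inequality. The only cosmetic difference is that you explicitly note $u_{d+1}\equiv 0$, which the paper leaves implicit.
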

\begin{proof}
We prove both inequalities for $\bff \in  \cS(\bbR^d;\mathbb{R}^{d})$ and then the general case follows by density. Both inequalities \eqref{eq-KornsForPoissonIntegrals-dtj} and \eqref{eq-KornsForPoissonIntegrals-Dxj} follow from identity  \eqref{eq-IdentityForComparisonOfuandU} in Proposition \ref{prop-identity}. Indeed, for $j=1, \dots, d$, we can differentiate the equation \eqref{eq-IdentityForComparisonOfuandU} in $t$ to obtain that for any $\bx\in \mathbb{R}^{d}$ and $t > 0$ 
\[\p_t u_j(\bx,t) = \p_t U_j(\bx,t) - R_j \big( \p_t U_{d+1} \big) (\bx,t)\,. \]
In a similar fashion, if we differentiate the equation \eqref{eq-IdentityForComparisonOfuandU}  in $x_k$ we obtain that 
\[
\p_{x_{k}} u_j(\bx,t) = \p_{x_{k}} U_j(\bx,t) - R_j \big( \p_{x_{k}} U_{d+1} \big) (\bx,t)\,.
\]
Both inequalities \eqref{eq-KornsForPoissonIntegrals-dtj} and \eqref{eq-KornsForPoissonIntegrals-Dxj} now follow by taking the $L^{p}$ norm on both sides of the above two equations and summing over $j=1, \dots, d$.  
Note that we have used both the fact that the Riesz transforms commute with differential operators and that the Riesz transforms are $L^p$ bounded.

For general $\bff \in L^p(\bbR^d;\bbR^d)$ take a sequence $(\bff_n) \subset \cS(\bbR^d;\bbR^d)$ converging to $\bff$ in $L^p(\bbR^d;\bbR^d)$. Set $\bF_n = (\bff_n, 0)$. Then apply  Lemma \ref{thm-PropertiesOfPoissonTypeKernelP},  Lemma \ref{integrable-PType} and Young's inequality to see that  $\p_t \big[ \mathbb{P}_t \, \ast \, \bF_n \big]$ converges to $\p_t \big[ \mathbb{P}_t \, \ast \, \bF \big]$ in $L^p(\bbR^d;\bbR^d)$ and that $\nabla_\bx \big[ \mathbb{P}_t \, \ast \, \bF_n \big]$ converges to $\nabla_\bx \big[ \mathbb{P}_t \, \ast \, \bF \big]$ in $L^p(\bbR^d;\bbR^d)$. 
\end{proof} 

\section{A characterization of fractional Sobolev spaces} \label{characterization}
\subsection{Equivalence of spaces}
In this subsection we prove one of the main results of this paper, which is the equivalence of the spaces $\mathcal{X}^{s}_{p}(\mathbb{R}^{d})$ and  $W^{s, p}(\bbR^d;\mathbb{R}^{d})$. We paraphrase it in the following theorem. 

\begin{theorem}\label{3inequalities}
Let $s\in (0, 1)$ and $1 <p < \infty$. Then $\mathcal{X}^{s}_{p}(\mathbb{R}^{d}) = W^{s, p}(\bbR^d;\mathbb{R}^{d})$. Moreover, there are constants $C_{i}$, $i=1, 2,3$ all depending only on $s, p, $ and $d$ such that for any $\bff = (f_1,f_2, \ldots, f_d) \in L^p(\bbR^d)$
\[
\begin{split}
[{\bf f}]_{W^{s, p}} & \stackrel{\bf (EQ_{1})}{\leq} \quad C_{1} \int_{0}^{\infty}{ t^{p(1-s)} \Vnorm{\p_t \bu(\cdot,t)}^{p}_{L^p(\bbR^d)} \frac{1}{t}}{dt}\\
&  \stackrel{\bf (EQ_{2})}{\leq} \quad C_{2} \int_{0}^{\infty}{ t^{p(1-s)} \Vnorm{\p_t \bU(\cdot,t)}^{p}_{L^p(\bbR^d)} \frac{1}{t}}{dt}\\
& \stackrel{\bf (EQ_{3})}{\leq}\quad C_{3} [{\bf f}]_{\mathcal{X}^{s}_{p}} \,,
\end{split}
\]
where ${\bf F}  = (\bff, 0)$, ${\bf u}(\bdx, t) = p_{t}\ast {\bff}(\bdx)$, and ${\bf U}(\bdx, t) = \mathbb{P}_{t}\ast {\bf F}(\bdx)$.
\end{theorem}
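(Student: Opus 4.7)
The chain splits naturally into three parts that I would tackle separately; the first two are essentially immediate.

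For $(\mathbf{EQ_1})$, I would simply note that this is exactly the right-hand inequality in \eqref{semi-norm-EQUIV} of Proposition \ref{thm-Prop7InStein}, which has already been proved in the excerpt. For $(\mathbf{EQ_2})$, my plan is to apply Proposition \ref{lpcomparisonDeruwithDerU} pointwise in $t$ to obtain $\|\p_t \bu(\cdot,t)\|_{L^p} \leq C \|\p_t \bU(\cdot,t)\|_{L^p}$, then raise to the $p$-th power, multiply by $t^{p(1-s)-1}$, and integrate in $t$ over $(0,\infty)$.

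The substance is in $(\mathbf{EQ_3})$. My starting point is that $\int_{\bbR^d} \mathbb{P}_t(\by)\,\mathrm{d}\by = \bbI_{d+1}$ is independent of $t$, so differentiating under the integral sign (justified by Lemma \ref{integrable-PType}) gives $\int_{\bbR^d} \p_t \mathbb{P}_t(\by)\,\mathrm{d}\by = \mathbf{0}$. Using this cancellation together with the matrix identity from the Remark following Lemma \ref{thm-PropertiesOfPoissonTypeKernelP}, I would write
\[
\p_t \bU(\bx,t) = \int_{\bbR^d} \p_t \overline{\bP}(\by,t)\,D(\bx,\by)\,\mathrm{d}\by, \qquad D(\bx,\by) := \big(\bff(\bx+\by)-\bff(\bx)\big) \cdot \frac{\by}{|\by|},
\]
and obtain, by directly differentiating the explicit formula for $\overline{\bP}$, the pointwise kernel bound
\[
|\p_t \overline{\bP}(\by,t)| \leq C\,\frac{|\by|}{(|\by|^2+t^2)^{(d+2)/2}}.
\]

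The core analytic step will be a H\"older inequality applied to the $\by$-integral with a free weight $|\by|^\alpha$: for a parameter $\alpha$ to be chosen and with $p'=p/(p-1)$,
\[
|\p_t \bU(\bx,t)|^p \leq \left(\int_{\bbR^d} |\p_t \overline{\bP}(\by,t)|\,|D(\bx,\by)|^p\,|\by|^\alpha\,\mathrm{d}\by\right)\left(\int_{\bbR^d} |\p_t \overline{\bP}(\by,t)|\,|\by|^{-\alpha/(p-1)}\,\mathrm{d}\by\right)^{p-1}.
\]
The second factor is purely a function of $t$: the substitution $t=|\by|\tau$ identifies its $(p-1)$-th power with $C\,t^{-\alpha-(p-1)}$, the underlying $\tau$-integral converging provided $\alpha>-(p-1)$. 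Multiplying by $t^{p(1-s)-1}$ collapses the combined $t$-weight to $t^{-sp-\alpha}$. I would then swap the remaining $t$- and $\by$-integrals by Tonelli and again substitute $t=|\by|\tau$ in the inner $t$-integral, producing a finite $\tau$-integral $\int_0^\infty \tau^{-sp-\alpha}(1+\tau^2)^{-(d+2)/2}\,\mathrm{d}\tau$ (finite precisely when $\alpha<1-sp$) multiplied by $|\by|^{-sp-\alpha-d}$. The factors $|\by|^\alpha$ and $|\by|^{-\alpha}$ cancel, leaving
\[
\int_0^\infty t^{p(1-s)-1}\,|\p_t \bU(\bx,t)|^p\,\mathrm{d}t \leq C\int_{\bbR^d} \frac{|D(\bx,\by)|^p}{|\by|^{d+sp}}\,\mathrm{d}\by;
\]
integrating in $\bx$ then yields $C\,[\bff]_{\mathcal{X}_p^s}^p$, establishing $(\mathbf{EQ_3})$.

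The main obstacle will be ensuring that $\alpha$ can be chosen to satisfy both constraints $\alpha>-(p-1)$ (needed for the $t$-dependent H\"older factor) and $\alpha<1-sp$ (needed for convergence of the final $\tau$-integral at the origin). The interval $(-(p-1),\,1-sp)$ is non-empty iff $sp<p$, i.e.\ $s<1$, which is exactly the standing hypothesis. The naive choice $\alpha=0$ (no weight) only works when $sp<1$; the freedom in $\alpha$ is precisely what permits the proof to cover the entire parameter regime $s\in(0,1)$, $p\in(1,\infty)$.
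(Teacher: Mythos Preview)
Your treatment of $(\mathbf{EQ_1})$ and $(\mathbf{EQ_2})$ matches the paper's exactly: those are simply citations of Proposition~\ref{thm-Prop7InStein} and Proposition~\ref{lpcomparisonDeruwithDerU}, respectively.

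For $(\mathbf{EQ_3})$ your argument is correct but takes a genuinely different route from the paper's. The paper begins with the same cancellation identity and the same explicit calculation of $\p_t \bU$ in terms of the projected difference $(\bff(\bx+\by)-\bff(\bx))\cdot\frac{\by}{|\by|}$, but then proceeds in the spirit of Stein's classical argument: it splits the convolution into the regions $|\by|\le t$ and $|\by|>t$ using the two pointwise bounds $|\p_t\mathfrak{p}_t^{jk}|\lesssim t^{-d-1}$ and $|\p_t\mathfrak{p}_t^{jk}|\lesssim |\by|^{-d-1}$, applies Minkowski's integral inequality to pass to $L^p$ norms, writes the result in polar coordinates via an auxiliary function $\Psi(r)$, and then invokes Hardy's inequalities on $L^p((0,\infty);t^{-1}\,dt)$ to close the estimate. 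By contrast, you avoid the near/far splitting and Hardy's inequalities entirely: a single weighted H\"older inequality in $\by$ with parameter $\alpha$, followed by two scaling substitutions and Tonelli, does all the work. Your approach is arguably more elementary and more self-contained; the paper's approach yields somewhat more explicit dependence of the constant on $s$ (factors like $((1-s)(s+d))^{-1}$) and mirrors a template from \cite{Stein} that extends transparently to the full Besov scale.

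One small point to tighten: each of your two scaling integrals has convergence conditions at \emph{both} ends, so there are four constraints on $\alpha$ in total, namely
\[
-(p-1) < \alpha < (d+1)(p-1) \quad\text{and}\quad -sp-d-1 < \alpha < 1-sp,
\]
not only the two you highlighted. These are, however, always simultaneously satisfiable for $s\in(0,1)$ and $p>1$ (for instance any $\alpha$ slightly below $1-sp$ works in every regime), so this does not affect the validity of your argument.
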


\begin{proof}
In the above (${\bf EQ_{1}}$) is in \eqref{semi-norm-EQUIV} proved in Proposition \ref{thm-Prop7InStein} and  (${\bf EQ_{2}}$) follows from the pointwise-in-$t$ estimate proved in Proposition \ref{lpcomparisonDeruwithDerU}.  What remains is the proof of the inequality (${\bf EQ_{3}}$). We prove it as follows.  
Recalling that $\int_{\mathbb{R}^{d}} \mathbb{P}_t (\bdx) \, \mathrm{d}\bx = \mathbb{I}_{d+1}$, we have 
\begin{equation}
\p_t \bU(\bx,t) = \intdm{\bbR^d}{\p_t \mathbb{P}_t(\by) \bF(\bx-\by)}{\by} = \int_{\bbR^d}{\p_t \mathbb{P}_t(\by)\left( \bF(\bx-\by) - \bF(\bx) \right)}{\, \mathrm{d}\by}\,.
\end{equation}
To reveal the connection with the integrand in the semi-norm  $ [{\bf f}]_{\mathcal{X}^{s}_{p}}$ we compute the derivative $\p_t \mathbb{P}_t(\by)$ in the above convolution directly. For $j = 1, \dots, d$ the $j^{th}$ term is given by 
\begin{equation}\label{reveal1}
\begin{split}
	\p_t U_j(\bx,t) &= \frac{2(d+1)}{\omega_d} \sum_{k=1}^d \int_{\bbR^d}{ \left( \frac{y_j}{(|\by|^2+t^2)^{\frac{d+3}{2}}} - \frac{(d+3)y_jt^2}{(|\by|^2+t^2)^{\frac{d+5}{2}}} \right) \Big( y_k \cdot ( f_k(\bx-\by) - f_k(\bx)) \Big)}{\, \mathrm{d}\by} \\
&= \frac{2(d+1)}{\omega_d} \int_{\bbR^d}{ \left( \frac{y_j |\by|}{(|\by|^2+t^2)^{\frac{d+3}{2}}} - \frac{(d+3)y_j \, |\by| \, t^2}{(|\by|^2+t^2)^{\frac{d+5}{2}}} \right) \left( (\bff(\bx+\by) - \bff(\bx)) \cdot \frac{\by}{|\by|} \right)}{\mathrm{d}\by}\,.
\end{split}
\end{equation}
A similar computation also shows that 
\begin{equation}\label{reveal2}
\begin{split}
\p_t U_{d+1}(\bx,t) &= \frac{2(d+1)}{\omega_d} \sum_{k=1}^d \int_{\bbR^d}{ \left( \frac{2t}{(|\by|^2+t^2)^{\frac{d+3}{2}}} - \frac{(d+3)t^3}{(|\by|^2+t^2)^{\frac{d+5}{2}}} \right) \Big( y_k \cdot ( f_k(\bx-\by) - f_k(\bx)) \Big)}{\, \mathrm{d}\by} \\
&= - \frac{2(d+1)}{\omega_d} \int_{\bbR^d}{ \left( \frac{2t |\by|}{(|\by|^2+t^2)^{\frac{d+3}{2}}} - \frac{(d+3)|\by| t^3}{(|\by|^2+t^2)^{\frac{d+5}{2}}} \right) \left( (\bff(\bx+\by) - \bff(\bx)) \cdot \frac{\by}{|\by|} \right)}{\, \mathrm{d}\by}\,. 
\end{split}
\end{equation}
Notice that the expressions inside the integrals in \eqref{reveal1} and \eqref{reveal2} are linear combinations of the $\p_t \mathfrak{p}_{t}^{jk}$ after factoring the unit vector $ {\by\over |\by|}$. As a result, these expressions enjoy the same pointwise estimates as  $\p_t \mathfrak{p}_{t}^{jk}$ stated in the Lemma \ref{integrable-PType}. That is, the expressions are majorized by $t^{-d-1}$ as well as by $|\by|^{-d-1}$.  We  will make use of these pointwise estimates below. 

By splitting the convolution integrals in \eqref{reveal1} and \eqref{reveal2} into an integral over $B_{t}({\bfs 0})$ and $\complement B_{t}(\bfs{0})$, the complement of $B_{t}(\bfs{0})$, we obtain that for any $t> 0$ and $\bx \in \mathbb{R}^d$
\[
\begin{split}
	\left| \p_t \bU(\bx,t) \right| &\leq
	\frac{C}{t^{d+1}} \int_{|\by| \leq t}{ \left| (\bff(\bx+\by) - \bff(\bx)) \cdot \frac{\by}{|\by|} \right|}{\, \mathrm{d}\by} + C \int_{|\by| > t}{ \left| (\bff(\bx+\by) - \bff(\bx)) \cdot \frac{\by}{|\by|} \right| \frac{1}{|\by|^{d+1}}}{\, \mathrm{d}\by}\,. 
\end{split}
\]
Now, using Minkowski's integral inequality we obtain that 
\begingroup\makeatletter\def\f@size{8.5}\check@mathfonts
\begin{equation}\label{two-pieces}
\begin{split}
	\Vnorm{\p_t \bU(\cdot,t)}_{L^p(\bbR^d)} \leq \frac{C}{t^{d+1}} \int_{|\by| \leq t}{\Vnorm{ (\bff(\cdot+\by) - \bff(\cdot)) \cdot \frac{\by}{|\by|}}_{L^p(\bbR^d)} }{\, \mathrm{d}\by} + C \int_{|\by|>t}\frac{{\Vnorm{ (\bff(\cdot+\by) - \bff(\cdot)) \cdot \frac{\by}{|\by|}}_{L^p(\bbR^d)} }}{|\by|^{d+1}} \, \mathrm{d}\by\,. 
\end{split}
\end{equation}
\endgroup
The remaining part of the argument that estimates the right-hand side of the above inequality by the semi-norm $ [{\bf f}]_{\mathcal{X}^{s}_{p}}$ follows that of \cite[Page 152]{Stein} where it was done for classical Besov spaces. We repeat it here for clarity. 
Changing to polar coordinates, write $\by = r \bw \in \bbR^d$, with $r = |\by|$ and $\bw \in \bbS^{d-1}$. Define
\[
\Psi(r) := \int_{\bbS^{d-1}}{\Vnorm{ (\bff(\cdot+r\bw) - \bff(\cdot)) \cdot {\bw}}_{L^p(\bbR^d)}}{\, \mathrm{d}\sigma(\bw)}\,.
\]
Then the last inequality in \eqref{two-pieces}  can be rewritten in terms of $\Psi(r)$ to obtain 
\[
\Vnorm{\p_t \bU(\cdot,t)}_{L^p(\bbR^d)} \leq \frac{C}{t^{d+1}} \int_{0}^{t}{\Psi(r) r^{d-1}}{\, \mathrm{d}r} + C \int_{t}^{\infty}{\Psi(r) r^{-2}}{\, \mathrm{d}r}\,.
\]
Multiply both sides by $t^{1-s}$ and estimate the norm in $L^{p}((0, \infty); t^{-1} \, \mathrm{d}t )$ on both sides to obtain, using Hardy's inequalities \cite[Appendix~A]{Stein},   that 
\begin{equation}
\begin{split}
	\left( \int_{0}^{\infty}{\left( t^{1-s} \Vnorm{\p_t \bU(\cdot,t)}_{L^p(\bbR^d)} \right)^p \frac{1}{t} }{\, \mathrm{d}t} \right)^{1/p} \leq {C\over (1-s)(s + d)}\left( \int_{0}^{\infty}{ \left( \Psi(r) r^{-s} \right)^p \frac{1}{r}}{\mathrm{d}r} \right)^{1/p}\,.
\end{split}
\end{equation}
By  H\"older's inequality we have $\Psi(r)^p \leq C \int_{\bbS^{d-1}}{\Vnorm{ (\bff(\cdot+r\bw) - \bff(\cdot)) \cdot {\bw}}_{L^p(\bbR^d)}^p}{\, \mathrm{d}\sigma(\bw)}$  and so we have 
\begin{equation}
\begin{split}
	&\left( \int_{0}^{\infty}{\left( t^{1-s} \Vnorm{\p_t \bU(\cdot,t)}_{L^p(\bbR^d)} \right)^p \frac{1}{t} }{\, \mathrm{d}t} \right)^{1/p} \\
	&\quad\quad\leq {C\over (1-s)(s + d)} \left( \int_{0}^{\infty}\int_{\bbS^{d-1}}{\Vnorm{ (\bff(\cdot+r\bw) - \bff(\cdot)) \cdot {\bw}}_{L^p(\bbR^d)}^p} r^{-sp-1} \, \mathrm{d}\sigma(w) {\, \mathrm{d}r} \right)^{1/p} \\
	&\quad\quad\leq {C\over (1-s)(s + d)} \left( \int_{0}^{\infty} \int_{\bbS^{d-1}} { {\Vnorm{ (\bff(\cdot+r\bw) - \bff(\cdot)) \cdot {\bw}}_{L^p(\bbR^d)}^p} \over r^{d+sp} } r^{d-1} {\, \mathrm{d}\sigma(w)} {\, \mathrm{d}r} \right)^{1/p} \\
	&\quad\quad\leq {C\over (1-s)(s + d)} \left( \int_{\bbR^d}{\frac{{\Vnorm{ (\bff(\cdot+\by) - \bff(\cdot)) \cdot {{\by}\over |\by|}}_{L^p(\bbR^d)}^p}}{|\by|^{d+sp}}}{\, \mathrm{d}\by} \right)^{1/p}\\
	 &\quad\quad= C  [\bff]_{\mathcal{X}^{s}_{p}(\bbR^d)}\,.
\end{split}
\end{equation}
where the last $C$ depends only only  on $s, p,$ and $d$. 
This completes  the proof of the theorem. 
\end{proof}
\subsection{Applications}
One may now use the equivalence of spaces we have established to obtain inequalities that are important in application. The simplest of all is the fractional Poincar\'e-Korn inequality which we will need in the next section. 
\begin{corollary}\label{Cor:P-korn}
Let $s \in (0,1)$ and $p \in (1,\infty)$. Let $B\subset \bbR^d$ be a ball. Then there exists a constant $C = C(p,s,d,B) >0$ such that 
\[
	\Vnorm{\bff}_{L^{p}(\bbR^d)}^p \leq C \int_{\bbR^d}\int_{\bbR^d}{\frac{\left|(\bff(\bx)-\bff(\by)) \cdot \frac{\bx-\by}{|\bx-\by|} \right|^p}{|\bx-\by|^{d+sp}}}{\, \mathrm{d}\bx}{\, \mathrm{d}\by}\,. 
\]
for any $\bff \in {W}^{s,p}(\bbR^d;\mathbb{R}^{d})$ such that $\text{Supp}(\bff)\subset B$. 
\end{corollary}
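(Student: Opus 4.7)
The plan is to combine the Korn-type equivalence already proved in Theorem~\ref{theorem-korn} (equivalently, Theorem~\ref{3inequalities}) with a standard fractional Poincar\'e inequality for compactly supported functions. Theorem~\ref{theorem-korn} gives $[\bff]_{W^{s,p}} \leq C(d,p,s)\,[\bff]_{\mathcal{X}^s_p}$ with no support hypothesis, so it suffices to establish the ``zero-extension'' Poincar\'e bound
\[
\|\bff\|_{L^p(\bbR^d)}^p \leq C(B,d,p,s)\, [\bff]_{W^{s,p}}^p
\]
whenever $\supp \bff \subset B$. Chaining the two inequalities then yields the corollary.

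For the Poincar\'e step I would fix a slightly larger concentric ball $B' \supsetneq B$ (say, with twice the radius). Since $\bff$ vanishes outside $B$, for every $\bdx \in B$ and $\bdy \in B' \setminus B$ we have $\bff(\bdy) = 0$ and therefore $|\bff(\bdx)|^p = |\bff(\bdx) - \bff(\bdy)|^p$. The key observation is the uniform bound $|\bdx - \bdy| \leq \mathrm{diam}(B')$, which gives
\[
\inf_{\bdx \in B} \int_{B'\setminus B} \frac{d\bdy}{|\bdx - \bdy|^{d+sp}} \geq \frac{|B' \setminus B|}{\mathrm{diam}(B')^{d+sp}} =: c_0 > 0.
\]
Multiplying the pointwise identity by the kernel $|\bdx - \bdy|^{-d-sp}$ and integrating in $\bdy$ over $B' \setminus B$ and then in $\bdx$ over $B$, one obtains
\[
c_0 \int_B |\bff(\bdx)|^p \, d\bdx \leq \int_B \int_{B' \setminus B} \frac{|\bff(\bdx) - \bff(\bdy)|^p}{|\bdx - \bdy|^{d+sp}} \, d\bdy \, d\bdx \leq [\bff]_{W^{s,p}}^p.
\]
Because $\supp \bff \subset B$, the left-hand side equals $c_0\|\bff\|_{L^p(\bbR^d)}^p$, which is the desired Poincar\'e inequality.

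There is no real obstacle here: this is essentially a direct combination of Theorem~\ref{theorem-korn} with the classical observation that compact support produces a ``free'' Poincar\'e estimate through integration against the Gagliardo kernel on a disjoint annular set. The only mild subtlety to flag is that the final constant $C$ inherits a dependence on the radius of $B$ (through $\mathrm{diam}(B')$) and degenerates as $s\to 0^+$ or as $B$ expands, which is compatible with the stated form of the corollary.
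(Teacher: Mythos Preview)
Your proposal is correct and matches the paper's intended approach: the corollary is stated without proof, presented simply as a direct consequence of the equivalence $[\bff]_{W^{s,p}}\le C[\bff]_{\mathcal{X}^s_p}$ from Theorem~\ref{theorem-korn}/\ref{3inequalities} combined with the standard fractional Poincar\'e inequality for compactly supported functions. Your annulus argument for the Poincar\'e step is exactly the classical one, and chaining it with the Korn inequality gives the result.
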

Another corollary of Theorem \ref{3inequalities} is a fractional Sobolev embedding \cite{Hitchhiker} that uses the seminorm $\mathcal{X}^{s}_{p}(\bbR^d)$. 
\begin{corollary}\label{Cor-SE}
Let $s \in (0,1)$ and $p \in (1,\infty)$ such that $sp < d$. Then there exists a constant $C = C(d,p,s)$ such that for any measurable and compactly supported vector field $\bff:\bbR^d\to \bbR^{d}$ we have 
\begin{equation*}
	\Vnorm{\bff}_{L^{p^{\ast_{s}}}(\bbR^d)}^p \leq C \int_{\bbR^d}\int_{\bbR^d}{\frac{\left|(\bff(\bx)-\bff(\by)) \cdot \frac{\bx-\by}{|\bx-\by|} \right|^p}{|\bx-\by|^{d+sp}}}{\, \mathrm{d}\bx}{\, \mathrm{d}\by}\,. 
\end{equation*}
where $p^{\ast_s} = \frac{dp}{d-sp}$.  As a consequence the space $\cX^{s}_{p}(\bbR^d)$ is continuously embedded in $L^{q}(\bbR^d;\bbR^d)$, for any $q\in [p, p^{\ast_s}]$. 
\end{corollary}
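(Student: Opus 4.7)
The embedding follows by chaining Theorem \ref{3inequalities} (the norm equivalence $\mathcal{X}^s_p = W^{s,p}$) with the classical fractional Sobolev--Gagliardo--Nirenberg inequality. The structure parallels the classical way the second Korn--Sobolev inequality $\|\bff\|_{L^{p^*}} \leq C \|(\nabla \bff)_{\mathrm{sym}}\|_{L^p}$ is obtained by composing the classical Korn and Sobolev inequalities.

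For the main inequality, one may assume the right-hand side is finite, i.e.\ $[\bff]_{\mathcal{X}^s_p(\bbR^d)} < \infty$, since the bound is trivial otherwise. A brief mollification argument shows $\bff \in L^p(\bbR^d;\bbR^d)$: the approximants $\bff_\varepsilon := \bff \ast \eta_\varepsilon \in C^\infty_c(\bbR^d;\bbR^d)$ are supported in a fixed ball $B$ containing the support of $\bff$, have $\mathcal{X}^s_p$-seminorm bounded by that of $\bff$ via Minkowski's integral inequality, and hence are uniformly bounded in $L^p$ by Corollary \ref{Cor:P-korn}. Passing to the limit identifies $\bff$ with an element of $\mathcal{X}^s_p(\bbR^d)$; Theorem \ref{3inequalities} then gives $\bff \in W^{s,p}(\bbR^d;\bbR^d)$ with
\[
[\bff]_{W^{s,p}(\bbR^d)}^p \leq C(d,p,s)\, [\bff]_{\mathcal{X}^s_p(\bbR^d)}^p.
\]
The classical fractional Sobolev embedding (see \cite{Hitchhiker}), applicable precisely when $sp < d$, delivers
\[
\Vnorm{\bff}_{L^{p^{\ast_s}}(\bbR^d)}^p \leq C(d,p,s)\, [\bff]_{W^{s,p}(\bbR^d)}^p,
\]
and composing the two bounds yields the desired estimate.

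For the continuous embedding $\mathcal{X}^s_p(\bbR^d) \hookrightarrow L^q(\bbR^d;\bbR^d)$ for $q \in [p,p^*_s]$, fix $\bff \in \mathcal{X}^s_p(\bbR^d)$ (not necessarily compactly supported). Theorem \ref{3inequalities} identifies $\bff$ with an element of $W^{s,p}(\bbR^d;\bbR^d)$ of equivalent norm, so the classical Sobolev embedding yields $\bff \in L^{p^*_s}(\bbR^d)$; combined with $\bff \in L^p(\bbR^d;\bbR^d)$ built into the definition of $\mathcal{X}^s_p$, standard $L^p$-interpolation controls $\|\bff\|_{L^q}$ for every $q \in [p,p^*_s]$. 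The proof is essentially bookkeeping once the equivalence of Theorem \ref{3inequalities} is in hand; the only technical subtlety is the initial reduction from a merely measurable compactly supported $\bff$ to an $L^p$ representative, handled by the mollification argument above.
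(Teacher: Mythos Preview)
Your proposal is correct and follows exactly the approach the paper intends: the corollary is stated immediately after Theorem~\ref{3inequalities} as a direct consequence of combining the fractional Korn inequality $[\bff]_{W^{s,p}} \leq C[\bff]_{\mathcal{X}^s_p}$ with the classical fractional Sobolev embedding from \cite{Hitchhiker}, and the paper gives no further argument. Your added mollification step to pass from a merely measurable compactly supported $\bff$ to an $L^p$ representative is a reasonable technical detail that the paper leaves implicit.
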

We will also use Theorem \ref{3inequalities} to prove fractional Korn-type inequalities for functions defined on the half space $\mathbb{R}^{d}_{+}$. The argument to prove such results is standard. We first extend vector fields to be defined over $\mathbb{R}^{d}$ such that the norm of the extended vector field is controlled by the original one.  Such an extension theorem is recently proved in \cite{Mengesha-Hardy}, which we state below. \begin{theorem} [{\bf Extension operator}]\label{thm:extension}
Let $d\geq 1$, $p\in [1,\infty)$ and $0<s<1$ and $ps\neq 1$. There exists an extension operator $$E : \mathring{\mathcal{X}}^{s}_{p}(\mathbb{R}^{d}_{+}) \to \mathcal{X}_{p}^{s}(\mathbb{R}^{d}) $$ and a positive constant $C = C(p,d,s)$ such that for any  ${\bf f}\in \mathring{\mathcal{X}}_{p}^{s}(\mathbb{R}^{d}_{+})$, and $\tilde{{\bf f}} = E{\bf f}$ we have that $\tilde{{\bf f}}  = {\bf f}$ a.e. in $\mathbb{R}^{d}_{+}$, $\tilde{{\bf f}} \in \mathcal{X}^{s}_{p}(\mathbb{R}^{d})$ and
\[
\begin{split}
&|\tilde{{\bf f}} |_{\mathcal{X}^{s}_{p}(\mathbb{R}^{d})} \leq C |{\bf u}|_{\mathcal{X}^{s}_{p}(\mathbb{R}^{d}_{+})}. 
\end{split}
\]
\end{theorem}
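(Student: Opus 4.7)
The plan is to exploit the fact that $\mathring{\mathcal{X}}^{s}_{p}(\mathbb{R}^d_+)$ is by definition the closure of $C^\infty_c(\mathbb{R}^d_+)$ in the $\mathcal{X}^{s}_{p}$ norm; by density it suffices to construct $E$ on $C^\infty_c(\mathbb{R}^d_+)$ with the stated bound and then extend by continuity. The simplest candidate is \emph{trivial (zero) extension}: set $E\bff = \tilde{\bff}$, where $\tilde{\bff} = \bff$ on $\mathbb{R}^d_+$ and $\tilde{\bff} \equiv 0$ on $\mathbb{R}^d_- := \mathbb{R}^d \setminus \overline{\mathbb{R}^d_+}$. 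For $\bff \in C^\infty_c(\mathbb{R}^d_+)$ this yields $\tilde{\bff} \in C^\infty_c(\mathbb{R}^d) \subset \mathcal{X}^{s}_{p}(\mathbb{R}^d)$, so the task is quantitative: control $[\tilde{\bff}]_{\mathcal{X}^{s}_{p}(\mathbb{R}^d)}$ by $[\bff]_{\mathcal{X}^{s}_{p}(\mathbb{R}^d_+)}$.

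First I would split the double integral defining $[\tilde{\bff}]^p_{\mathcal{X}^{s}_{p}(\mathbb{R}^d)}$ into four pieces according to whether $\bx,\by$ lie in $\mathbb{R}^d_+$ or $\mathbb{R}^d_-$:
\[
[\tilde{\bff}]^p_{\mathcal{X}^{s}_{p}(\mathbb{R}^d)} = I_{++} + I_{--} + 2 I_{+-}.
\]
Since $\tilde{\bff} \equiv 0$ on $\mathbb{R}^d_-$, the integrand in $I_{--}$ vanishes identically, while $I_{++}$ equals $[\bff]^p_{\mathcal{X}^{s}_{p}(\mathbb{R}^d_+)}$. The real work is the cross term
\[
I_{+-} = \int_{\mathbb{R}^d_+}\!\! \int_{\mathbb{R}^d_-} \frac{\bigl|\bff(\bx) \cdot \frac{\bx-\by}{|\bx-\by|}\bigr|^p}{|\bx-\by|^{d+sp}}\, \mathrm{d}\by\, \mathrm{d}\bx.
\]
I would use the crude pointwise bound $|\bff(\bx)\cdot \tfrac{\bx-\by}{|\bx-\by|}| \leq |\bff(\bx)|$ to decouple $\by$, then pass to polar coordinates in $\mathbb{R}^d_-$ about the boundary projection of $\bx$. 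A direct computation (integrating first in the tangential variable and then in $|y_d|$) yields
\[
\int_{\mathbb{R}^d_-} \frac{\mathrm{d}\by}{|\bx-\by|^{d+sp}} = C_{d,s,p}\, x_d^{-sp},
\]
so that $I_{+-} \leq C \int_{\mathbb{R}^d_+} |\bff(\bx)|^p\, x_d^{-sp}\, \mathrm{d}\bx$.

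The plan concludes by invoking a \emph{fractional Hardy-type inequality on the half-space for the $\mathcal{X}^{s}_{p}$ semi-norm}, namely
\[
\int_{\mathbb{R}^d_+} \frac{|\bff(\bx)|^p}{x_d^{sp}}\, \mathrm{d}\bx \leq C\, [\bff]^p_{\mathcal{X}^{s}_{p}(\mathbb{R}^d_+)}\quad \text{for all } \bff \in \mathring{\mathcal{X}}^{s}_{p}(\mathbb{R}^d_+),\ sp\neq 1,
\]
which together with the decomposition above immediately gives the desired bound. The hypothesis $sp \neq 1$ in the theorem is forced precisely by the known exceptional exponent for fractional Hardy inequalities on the half-space. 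I expect this Hardy inequality to be the main obstacle: the classical fractional Hardy inequality is stated relative to the full Gagliardo semi-norm $[\bff]_{W^{s,p}}$, and one cannot simply quote it here because $[\bff]_{\mathcal{X}^{s}_{p}}$ is \emph{strictly smaller} (indeed its equivalence with $[\bff]_{W^{s,p}}$ on the half space is what the extension theorem is being used to establish). Thus a genuinely new argument is required for the Hardy step, presumably the one developed in \cite{Mengesha-Hardy}; once that sharp Hardy estimate is in hand, the rest of the extension argument is the routine computation sketched above, and the passage from $C^\infty_c$ to $\mathring{\mathcal{X}}^{s}_{p}$ is automatic by linearity of $E$.
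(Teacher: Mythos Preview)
Your approach is valid but differs from the one the paper describes. The paper does not give a self-contained proof here; it cites \cite{Mengesha-Hardy} and summarizes the strategy as follows: reflection does not preserve $[\,\cdot\,]_{\mathcal{X}^{s}_{p}}$, zero extension is dismissed as ``not appropriate, since it is not clear how to control the norm of the extended function,'' and instead one uses the \emph{Nitsche extension} (the one from \cite{Nitsche} used in the classical proof of Korn's second inequality), whose boundedness in the $\mathcal{X}^{s}_{p}$ seminorm is shown after first establishing a fractional Hardy-type inequality for that seminorm.

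Your zero-extension argument is in fact correct and is more elementary than the Nitsche route, \emph{provided} the Hardy inequality
\[
\int_{\mathbb{R}^d_+}\frac{|\bff(\bx)|^p}{x_d^{sp}}\,\mathrm{d}\bx \;\leq\; C\,[\bff]^{p}_{\mathcal{X}^{s}_{p}(\mathbb{R}^d_+)},\qquad sp\neq 1,
\]
is available independently. The paper confirms that in \cite{Mengesha-Hardy} Hardy is established \emph{before} the extension theorem, so there is no circularity in invoking it. Thus your plan actually rebuts the paper's remark that zero extension is unsuitable: once Hardy is in hand, the cross term $I_{+-}$ is controlled exactly as you compute, and nothing more is needed. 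The Nitsche extension, by contrast, is designed to interact well with the symmetric part of the gradient and is the natural choice if one wants an extension operator that also respects the limiting $s\to 1^{-}$ structure; for the pure seminorm bound stated here, your argument is shorter. In both approaches the entire analytic content lies in the Hardy step, which you correctly flag as the nontrivial ingredient requiring the new argument of \cite{Mengesha-Hardy}.
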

The theorem is proved in \cite{Mengesha-Hardy}. We emphasize that the proof of the above extension theorem is nontrivial as the commonly used reflection across the hyperplane $x_{d} = 0$ would not preserve the semi-norm $|\cdot|_{\mathcal{X}^{s}_{p}}$. Extending by zero is also not appropriate, since it is not clear how to control the norm of the extended function. Rather we use an extension operator that has been used by J. A. Nitsche in \cite{Nitsche} in his simple proof of Korn's second inequality.  In showing the boundedness of the extension operator with respect to the semi-norm $|\cdot|_{\mathcal{X}^{s}_{p}}$ 
 we need to first establish the fractional Hardy-type inequality.   See \cite{Mengesha-Hardy} for more details.  With extension at hand the proof of the result below is standard. 
\begin{proposition}
For $s \in (0,1)$, $1 < p < \infty$, $sp \neq 1$, there exists a constant $C = C(d,p,s) > 0$ such that for any ${\bf f}\in \mathring{\mathcal{X}}^{s}_{p}(\mathbb{R}^{d}_{+})$ it holds that 
\begin{equation}
 \int_{\bbR^d_{+}}\int_{\bbR^d_{+}}{\frac{\left|\bff(\bx)-\bff(\by) \right|^p}{|\bx-\by|^{d+sp}}}{\, \mathrm{d}\bx}{\, \mathrm{d}\by}\,  \leq C \int_{\bbR^d_{+}}\int_{\bbR^d_{+}}{\frac{\left|(\bff(\bx)-\bff(\by)) \cdot \frac{\bx-\by}{|\bx-\by|} \right|^p}{|\bx-\by|^{d+sp}}}{\, \mathrm{d}\bx}{\, \mathrm{d}\by}\,. 
\end{equation}
\end{proposition}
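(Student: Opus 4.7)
The plan is to reduce this half-space statement to the whole-space Korn inequality (Theorem \ref{3inequalities}) by invoking the extension operator stated in Theorem \ref{thm:extension}. The chain is short: extend, apply the whole-space Korn inequality to control the Gagliardo semi-norm by the $\mathcal{X}^{s}_{p}$ semi-norm, and then pass back to the half-space on both sides using monotonicity of the integrals in the domain.

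More precisely, given $\bff \in \mathring{\mathcal{X}}^{s}_{p}(\mathbb{R}^{d}_{+})$, I would first form $\tilde{\bff} := E\bff \in \mathcal{X}^{s}_{p}(\mathbb{R}^{d})$ using the extension operator provided by Theorem \ref{thm:extension}. By that theorem, $\tilde{\bff} = \bff$ a.e.\ on $\mathbb{R}^{d}_{+}$ and
\[
[\tilde{\bff}]_{\mathcal{X}^{s}_{p}(\mathbb{R}^{d})} \leq C\,[\bff]_{\mathcal{X}^{s}_{p}(\mathbb{R}^{d}_{+})}.
\]
Next, since $\tilde{\bff} \in \mathcal{X}^{s}_{p}(\mathbb{R}^{d}) = W^{s,p}(\mathbb{R}^{d};\mathbb{R}^{d})$ by Theorem \ref{3inequalities}, the global fractional Korn inequality \eqref{frackorn} yields
\[
[\tilde{\bff}]_{W^{s,p}(\mathbb{R}^{d})} \leq C\,[\tilde{\bff}]_{\mathcal{X}^{s}_{p}(\mathbb{R}^{d})}.
\]

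Finally, I would use the obvious monotonicity in the domain of integration: restricting both $\bx$ and $\by$ to $\mathbb{R}^{d}_{+}$ decreases the Gagliardo semi-norm, while $\tilde{\bff}$ agrees with $\bff$ on $\mathbb{R}^{d}_{+}$, so
\[
\int_{\mathbb{R}^{d}_{+}}\!\!\int_{\mathbb{R}^{d}_{+}} \frac{|\bff(\bx)-\bff(\by)|^{p}}{|\bx-\by|^{d+sp}}\,\mathrm{d}\bx\,\mathrm{d}\by \;\leq\; [\tilde{\bff}]_{W^{s,p}(\mathbb{R}^{d})}^{p}.
\]
Chaining the three estimates gives the desired bound with a constant $C = C(d,p,s)$, after noting that the $\mathcal{X}^{s}_{p}(\mathbb{R}^{d}_{+})$ semi-norm is exactly the right-hand side of the inequality to be proved.

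The only substantive obstacle has already been handled elsewhere, namely the construction and boundedness of the extension operator $E$ (Theorem \ref{thm:extension}), since the naive zero or reflection extensions do not control the projected-difference-quotient semi-norm, and the proof in \cite{Mengesha-Hardy} requires a fractional Hardy-type inequality on the half-space. With that result and Theorem \ref{3inequalities} in hand, the remaining reasoning is the three-line monotonicity and composition argument above, and the restriction $sp \neq 1$ enters only through the hypothesis of Theorem \ref{thm:extension}.
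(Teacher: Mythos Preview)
Your proposal is correct and follows exactly the approach the paper intends: the paper states that ``with extension at hand the proof of the result below is standard'' and gives no further details, and your three-step argument (extend via Theorem~\ref{thm:extension}, apply the whole-space inequality of Theorem~\ref{3inequalities}, then restrict by monotonicity) is precisely that standard argument.
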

In future work we hope to report on  the natural next step of establishing  the equivalence of the $\mathcal{X}^{s}_{p}(\Omega)$ with $W^{s, p}(\Omega;\mathbb{R}^{d})$ defined over domains with sufficiently regular boundary. 
\section{Self improving properties for a coupled system of nonlocal equations}

\subsection{Preliminaries }
Given a ball $B\subset \bbR^{d}$ with radius $r$, $\kappa B$ represents a ball with the same center but with radius $\kappa r$.
Note that for a given ${\bf u} \in \mathcal{X}^{s}_{p}(2B)$ and $\eta \in C^{\infty}_c(2B)$ with  $\eta \equiv 1$ in $B$, the function $\eta {\bf u} \in \mathcal{X}^{s}_{p}(\bbR^{d})$. Moreover, 
\[
[\eta {\bf u}]_{ \mathcal{X}^{s}_{p}(\mathbb{R}^{d})} \leq C \|{\bf u}\|_{\mathcal{X}^{s}_{p}(2B)}
\]
for a constant $C$ independent of ${\bf u}$. 

We also recall that for $\bu\in \mathcal{X}_{p}^{s}(\Omega), $ we have that $\bbL^{s}_{p,\Omega}({\bf u})\in [\mathcal{X}_{p}^{s}(\Omega)]^{\ast}$. Indeed,  by definition \eqref{defn-of-operator} and H\"older's inequality we have 
\[
|\langle\bbL^{s}_{p,\Omega}({\bf u}), \phi\rangle| \leq C [{\bf u}]_{\mathcal{X}_p^{s}(\Omega)}[\phi]_{\mathcal{X}_p^{s}(\Omega)}, \quad \text{for all $\phi\in \mathcal{X}_{p}^{s}(\Omega)$}\,. 
\]
More generally, for $t\in (0, 1)$, we define the dual norm of $\bbL^{s}_{p,\Omega}({\bf u})$ in $\mathring{\mathcal{X}}_{p}^{t}(\Omega)$ by 
\[
\| \bbL^{s}_{p,\Omega}({\bf u})\|_{[\mathring{\mathcal{X}}_{p}^{t}(\Omega)]^{\ast}} := \sup_{\phi} |\langle\bbL^{s}_{p,\Omega}({\bf u}), \phi\rangle|\,,
\]
where the supremum is taken over all $\phi\in C^\infty_c(\Omega;\mathbb{R}^{d})$ such that $[\phi]_{\mathcal{X}_{p}^{t}(\bbR^{d}) } \leq 1$. The fractional Laplacian $(-\Delta)^{\beta}{\bf u}$ is defined as
\[
(-\Delta)^{\beta}{\bf u}(\bdx) = c_{s,d} \,\cF^{-1}(|\bfxi|^{2\beta} \cF({\bf u}))\,,
\]
where $c_{s,d}>0$ is a normalizing constant.  
The fractional Laplacian has a quasi-local behavior that has been quantified in the following estimates: Let  $p,q \in [1,\infty]$ and $s \in (0,1)$. 
For any  $\Omega_1$ and $\Omega_2$ be bounded, disjoint open sets such that $\rho := \dist(\Omega_1,\Omega_2) > 0$ and for any $\vphi \in C^{\infty}_c(\bbR^d)$,
$$
\Vnorm{\Dss (\vphi \chi_{\Omega_2})}_{L^p(\Omega_1)} \leq \rho^{-d-2s} |\Omega_1|^{1/p} |\Omega_2|^{1-1/q} \Vnorm{\vphi}_{L^q(\Omega_2)}\,.
$$
This is established in \cite{Schikorra, Schikorra2}.  
We also state the following technical lemma -- in a form needed for our purposes -- that summarizes the action of the fractional Laplacian as a potential. The result in this lemma is embedded in an inequality in \cite{Schikorra}, but we will give the proof here for the sake of completeness.
\begin{lemma}\label{tech-tools}
Let $p,q \in [1,\infty]$ and $s \in (0,1)$. 
Let $\veps \in (0, 1-s)$. Suppose that $B\subset \bbR^{d}$  is a ball, $\vphi \in C^{\infty}_c(4B;\bbR^d)$ with $[\vphi]_{\mathcal{X}^{s+\veps}_{p}(\bbR^d)} \leq 1$, and $\eta \in C^{\infty}_c(6B)$ and $\eta \equiv 1$ on $5B$. 
Then the function $\eta (-\Delta)^{\frac{\veps p}{2}}\vphi\,\in C^{\infty}_c(6B;\mathbb{R}^{d})$, and there exists a constant $C>0,$ independent of $\vphi$ such that 
\begin{equation}\label{eq-TestFxnEstimate1}
[\eta (-\Delta)^{\frac{\veps p}{2}}\vphi]_{\mathcal{X}^{s-\veps(p-1)}_{p}(\bbR^d)} \leq  C [\vphi]_{\mathcal{X}^{s+\veps}_{p}(\bbR^d)}\,.
\end{equation}
\end{lemma}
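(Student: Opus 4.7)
The plan is to reduce the estimate to a Gagliardo-seminorm computation by sandwiching both sides of the inequality between $\mathcal{X}^{\cdot}_{p}$ and $W^{\cdot,p}$ seminorms using the fractional Korn equivalence (Theorem \ref{3inequalities}). Since $\vphi \in C^{\infty}_c(4B;\bbR^d)$ is Schwartz, the Fourier-multiplier $w:=(-\Delta)^{\veps p/2}\vphi$ is smooth on $\bbR^{d}$; multiplication by $\eta\in C^{\infty}_c(6B)$ then yields a smooth function supported in $6B$, giving the first assertion. For the bound, the trivial inequality from Theorem \ref{3inequalities} gives
\[
[\eta w]_{\mathcal{X}^{s-\veps(p-1)}_{p}(\bbR^d)} \le [\eta w]_{W^{s-\veps(p-1),p}(\bbR^d)},
\]
and at the other end its nontrivial inequality gives $[\vphi]_{W^{s+\veps,p}} \le C[\vphi]_{\mathcal{X}^{s+\veps}_{p}}$. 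So the whole game is to prove
\[
[\eta w]_{W^{s-\veps(p-1),p}(\bbR^d)} \le C [\vphi]_{W^{s+\veps,p}(\bbR^d)}.
\]

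To handle this, I would use the Leibniz-type splitting $\eta(x)w(x) - \eta(y)w(y) = \eta(x)(w(x)-w(y)) + (\eta(x)-\eta(y))w(y)$ in the Gagliardo double integral, yielding two pieces $I_A$ and $I_B$. For $I_A$, pull out $\|\eta\|_{L^{\infty}}^p$ and invoke the standard mapping property of the fractional Laplacian, namely that $(-\Delta)^{\veps p/2}$ is bounded as a map $W^{s+\veps,p}(\bbR^d) \to W^{s-\veps(p-1),p}(\bbR^d)$ (a Mikhlin-multiplier/Bessel-potential statement valid for $\veps$ small enough that $s-\veps(p-1)\in(0,1)$). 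This produces exactly $C[\vphi]_{W^{s+\veps,p}}^p$.

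For $I_B$, the crucial observation is that smoothness and compact support of $\eta$ yield $|\eta(x)-\eta(y)| \le C \min(|x-y|,1)$, so with $\sigma := s-\veps(p-1) \in (0,1)$ one gets
\[
\sup_{y\in\bbR^d}\int_{\bbR^d} \frac{|\eta(x)-\eta(y)|^p}{|x-y|^{d+p\sigma}}\,dx \le C,
\]
splitting at $|x-y|=1$ (the near piece converges since $\sigma<1$, the far piece since $\sigma>0$). Hence $I_B \le C\|w\|_{L^p(\bbR^d)}^p = C\|(-\Delta)^{\veps p/2}\vphi\|_{L^p(\bbR^d)}^p$. Since $\vphi$ is supported in the fixed ball $4B$, the fractional Poincaré--Korn inequality (Corollary \ref{Cor:P-korn}) combined with Theorem \ref{3inequalities} gives $\|\vphi\|_{L^p}\le C[\vphi]_{W^{s+\veps,p}}$, and standard embeddings/multiplier bounds upgrade this to $\|(-\Delta)^{\veps p/2}\vphi\|_{L^p} \le C\|\vphi\|_{W^{s+\veps,p}}$ using $\veps p < s+\veps$.

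The main obstacle I anticipate is the term $I_B$: unlike $I_A$, which rides on a clean Fourier-multiplier bound, the cross-term entangles the cutoff $\eta$ with a nonlocal quantity $(-\Delta)^{\veps p/2}\vphi$ that is \emph{not} compactly supported, so one cannot localize naively. Controlling it requires both the quantitative decay/smoothness of $\eta$ in the Gagliardo kernel and the interplay between the compact support of $\vphi$ and the mapping properties of $(-\Delta)^{\veps p/2}$. Keeping track of constants that stay uniform in $\vphi$ (but may depend on $B$, $\eta$, $d$, $s$, $p$, $\veps$) is where the delicate bookkeeping lies; once both $I_A$ and $I_B$ are estimated, a final application of Theorem \ref{3inequalities} converts $[\vphi]_{W^{s+\veps,p}}$ into $[\vphi]_{\mathcal{X}^{s+\veps}_{p}}$, completing \eqref{eq-TestFxnEstimate1}.
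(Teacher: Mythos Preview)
Your proposal is correct and follows essentially the same route as the paper: the same Leibniz splitting into a ``main'' term controlled by the mapping property $(-\Delta)^{\veps p/2}\colon W^{s+\veps,p}\to W^{s-\veps(p-1),p}$ and a ``cross'' term reduced to $\Vnorm{(-\Delta)^{\veps p/2}\vphi}_{L^p}$, with Poincar\'e--Korn and Theorem~\ref{3inequalities} closing both estimates. The only difference is that the paper spells out the multiplier bound via Stein's identity $(-\Delta)^{\veps p/2}\vphi=(1-\Delta)^{\veps p/2}(\vphi\ast\mu)$ with a finite measure $\mu$, whereas you cite it as a standard Bessel-potential fact; note also that your $I_A$ yields $\Vnorm{\vphi}_{W^{s+\veps,p}}$ rather than the seminorm directly, so the Poincar\'e--Korn step you invoke for $I_B$ is needed there as well.
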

\begin{proof}
 
Adding and subtracting $\eta(\by) (-\Delta)^{\frac{\veps p}{2}} \vphi(\bx)$ we have
\begin{align*}
[\eta (-\Delta)^{\frac{\veps p}{2}}\vphi]_{\mathcal{X}^{s-\veps(p-1)}_{p}(\bbR^d)}^p &= \intdm{\bbR^d}{\intdm{\bbR^d}{\frac{\left| \left( \eta(\bx) (-\Delta)^{\frac{\veps p}{2}}\vphi(\bx) - \eta(\by) (-\Delta)^{\frac{\veps p}{2}}\vphi(\by) \right) \cdot{ \by-\bx \over |\by-\bx|} \right|^p}{|\bx-\by|^{d+(s-\veps (p-1))p}}}{\bx}}{\by} \\
&\leq C [(-\Delta)^{\frac{\veps p}{2}} \vphi]_{W^{s-\veps(p-1),p}(\bbR^d)}^p + C \intdm{\bbR^d}{\intdm{\bbR^d}{\frac{|\eta(\bx)-\eta(\by)|^p |(-\Delta)^{\frac{\veps p}{2}}\vphi(\bx)|^p}{|\bx-\by|^{d+(s-\veps(p-1))p}}}{\bx}}{\by} \\
&= I + II\,.
\end{align*}
We estimate the first term $I$ first. We will use the following identity that relates Riesz and  Bessel potentials, which can be found in \cite[Lemma 2,~Chapter V]{Stein}, that there exists a  finite measure $\mu$ that depends on $\veps$ and $p$ such that 
\[
(-\Delta)^{\frac{\veps p}{2}} \vphi = (1-\Delta)^{\frac{\veps p}{2}}(\vphi \, \ast \, \mu)\,.
\]
Using the fact that $(1-\Delta)^{\frac{\veps p}{2}} : W^{s+\veps,p}(\bbR^d) \, = \, W^{s-\veps(p-1)+\veps p,p}(\bbR^d) \to W^{s-\veps(p-1),p}(\bbR^d)$ is an isomorphism \cite[Theorem 4', Chapter V]{Stein}, we have
$$
I \leq C  \Vnorm{\vphi \, \ast \, \mu}_{W^{s+\veps,p}(\bbR^d)}^p\,.
$$
Now by Jensen's inequality and Fubini's theorem,
\begingroup\makeatletter\def\f@size{9.5}\check@mathfonts
\begin{align*}
\intdm{\bbR^d}{\intdm{\bbR^d}{\frac{\left| (\vphi \, \ast \, \mu)(\bx) - (\vphi \, \ast \, \mu)(\by) \right|^p}{|\bx-\by|^{d+(s+\veps)p}}}{\bx}}{\by} &= \intdm{\bbR^d}{ \intdm{\bbR^d}{\left| \intdm{\bbR^d}{\big( \vphi(\bz-\bx)-\vphi(\bz-\by)\big)}{\mu(\bz)} \right|^p \frac{1}{|\bx-\by|^{d+(s+\veps)p}}}{\bx}}{\by} \\
&\leq \mu(\bbR^d)^{p-1} \iiintdm{\bbR^d}{\bbR^d}{\bbR^d}{\frac{|\vphi(\bz-\bx)-\vphi(\bz-\by)|^p}{|\bx-\by|^{d+(s+\veps) p}}}{\mu(\bz)}{\bx}{\by} \\
&= \mu(\bbR^d)^p [\vphi]_{W^{s+\veps,p}(\bbR^d)}^p\,.
\end{align*}
\endgroup
Similarly, 
\[
\|\vphi \, \ast \, \mu\|^{p}_{L^{p}} \leq \mu(\bbR^{d})^{p}\|\vphi\|^{p}_{L^p} \leq C\mu(\bbR^{d})^{p} [\vphi]_{W^{s+\veps,p}(\bbR^d)}^p
\]
where the last inequality follows from a Poincar\'e-Korn type inequality, Corollary \autoref{Cor:P-korn}, where $C$ depends on the support set $4B$, $p$, and $\veps$. 
Combining the above two inequalities and using the fractional  Korn's inequality we proved we have that, 
\[I \leq C\,\mu(\bbR^{d})^{p} [\vphi]_{\mathcal{X}^{s+\veps}_{p}(\bbR^d)}^p\,.
\]
To estimate the second term $II$ we proceed as follows. 
\begin{align*}
II &\leq C \intdm{\complement(7B)}{\intdm{7 B}{\frac{|\eta(\bx)-\eta(\by)|^p |(-\Delta)^{\frac{\veps p}{2}} \vphi(\bx) |^p}{|\bx-\by|^{d+(s-\veps(p-1))p}}}{\bx}}{\by} + C \intdm{7B}{\intdm{7B}{\cdots}{\bx}}{\by} \\
&\leq C \Vnorm{\eta}_{L^{\infty}} \intdm{\complement(7B)}{\intdm{7 B}{\frac{|(-\Delta)^{\frac{\veps p}{2}} \vphi(\bx) |^p}{|\bx-\by|^{d+(s-\veps(p-1))p}}}{\bx}}{\by} + C \Vnorm{\grad \eta}_{L^{\infty}} \intdm{7B}{\intdm{7B}{\frac{|(-\Delta)^{\frac{\veps p}{2}} \vphi(\bx) |^p}{|\bx-\by|^{d+(s-\veps(p-1))p}}}{\bx}}{\by} \\
&\leq C \Vnorm{(-\Delta)^{\frac{\veps p}{2}} \vphi}^p_{L^p(\bbR^d)} \leq C \Vnorm{(-\Delta)^{\frac{\veps p}{2}} \vphi}_{W^{s-\veps(p-1),p}(\bbR^d)}^p\,.
\end{align*}
We repeat the argument used to bound $I$ and get that
\begin{align*}
II \leq C \Vnorm{\vphi}_{W^{s+\veps,p}(\bbR^d)}^p \leq C \mu(\bbR^d)^p [\vphi]_{\mathcal{X}^{s+\veps}_{p}(\bbR^d)}^p
\end{align*}
where again we have applied the Poincar\'e-Korn and fractional Korn inequalities. Thus \eqref{eq-TestFxnEstimate1} is proved.
\end{proof}

\begin{lemma}\label{lma-Density}
Let $\Omega \subset \bbR^d$ be an open set, and let $s \in (0,1)$, $p \geq 2$. Suppose $\bu \in \mathcal{X}^{s}_{p}(\bbR^d)$ such that $\supp \bu \Subset \Omega$. Then there exists a sequence $(\bu_n) \in C^{\infty}_c(\Omega;\bbR^d)$ with the property that 
\[
\bu_n \to \bu \quad \text{ in } \mathcal{X}^{s}_{p}(\bbR^d)
\] 
as $n \to \infty$.
\end{lemma}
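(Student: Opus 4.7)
The plan is to reduce the problem to a standard mollification argument by appealing to the equivalence of norms established in Theorem \ref{3inequalities}. Since $\bu \in \mathcal{X}^{s}_{p}(\bbR^d) = W^{s,p}(\bbR^d;\bbR^d)$ and the semi-norms satisfy $[\cdot]_{\mathcal{X}^{s}_{p}} \leq [\cdot]_{W^{s,p}} \leq C [\cdot]_{\mathcal{X}^{s}_{p}}$, it suffices to produce a sequence $\bu_{n} \in C^{\infty}_{c}(\Omega;\bbR^{d})$ with $\bu_{n} \to \bu$ in $W^{s,p}(\bbR^d;\bbR^d)$.

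The construction is by standard convolution with a mollifier. First, because $\supp \bu \Subset \Omega$, there exists $\delta_0 > 0$ such that the $\delta_0$-neighborhood of $\supp \bu$ is contained in $\Omega$. Let $\rho \in C^{\infty}_{c}(\bbR^d)$ be a standard nonnegative radial mollifier with $\supp \rho \subset B(0,1)$ and $\int \rho = 1$, and set $\rho_{n}(\bx) = n^{d}\rho(n\bx)$. Define $\bu_{n} := \rho_{n} \ast \bu$. For $n > 1/\delta_0$ we have $\supp \bu_{n} \subset \supp \bu + B(0,1/n) \subset \Omega$, so that $\bu_{n} \in C^{\infty}_{c}(\Omega;\bbR^{d})$.

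It remains to verify $\bu_{n} \to \bu$ in $W^{s,p}(\bbR^d;\bbR^d)$. Convergence in $L^{p}$ is classical. For the Gagliardo semi-norm, first note that for any $\bv \in W^{s,p}(\bbR^d;\bbR^d)$, Minkowski's integral inequality applied to the translation identity
\[
(\bv \ast \rho_{n})(\bx) - (\bv \ast \rho_{n})(\by) = \int_{\bbR^{d}} \rho_{n}(\bz)\bigl(\bv(\bx-\bz) - \bv(\by-\bz)\bigr)\, \mathrm{d}\bz
\]
together with Fubini's theorem yields the uniform bound $[\rho_{n} \ast \bv]_{W^{s,p}} \leq [\bv]_{W^{s,p}}$. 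Since $C^{\infty}_{c}(\bbR^{d};\bbR^{d})$ is dense in $W^{s,p}(\bbR^{d};\bbR^{d})$, a standard $\varepsilon/3$ argument then shows that $[\rho_{n} \ast \bv - \bv]_{W^{s,p}} \to 0$ for every $\bv \in W^{s,p}$: one approximates $\bv$ by a Schwartz (or $C^{\infty}_{c}$) $\bw$ within $\varepsilon/3$, uses the uniform bound on the tails, and handles the smooth piece by direct estimation using that $\rho_{n} \ast \bw \to \bw$ in every Sobolev norm. Applying this with $\bv = \bu$ produces the desired convergence in $W^{s,p}$, and hence, by Theorem \ref{3inequalities}, in $\mathcal{X}^{s}_{p}(\bbR^{d})$.

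There is really no serious obstacle here once Theorem \ref{3inequalities} is available; the only mild subtlety is that one should not try to prove convergence directly in the projected semi-norm $[\cdot]_{\mathcal{X}^{s}_{p}}$, where the anisotropic direction $(\bx-\by)/|\bx-\by|$ interacts awkwardly with convolution. Routing the argument through the isotropic $W^{s,p}$ seminorm, which the Korn-type equivalence makes available, avoids this entirely.
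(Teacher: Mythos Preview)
Your proof is correct and uses the same mollification construction as the paper. The one difference is that you route the convergence argument through $W^{s,p}$ via Theorem~\ref{3inequalities}, whereas the paper asserts convergence directly in $\mathcal{X}^{s}_{p}(\bbR^d)$ ``via basic estimates.'' Your stated reason for the detour, that the projected direction $(\bx-\by)/|\bx-\by|$ ``interacts awkwardly with convolution,'' is unwarranted: since $(\bx-\bz)-(\by-\bz)=\bx-\by$, one has
\[
\bigl((\rho_n\ast\bu)(\bx)-(\rho_n\ast\bu)(\by)\bigr)\cdot\frac{\bx-\by}{|\bx-\by|}
=\int_{\bbR^d}\rho_n(\bz)\,\bigl(\bu(\bx-\bz)-\bu(\by-\bz)\bigr)\cdot\frac{(\bx-\bz)-(\by-\bz)}{|(\bx-\bz)-(\by-\bz)|}\,\mathrm{d}\bz,
\]
so the Minkowski-plus-continuity-of-translations argument you wrote for $[\cdot]_{W^{s,p}}$ works verbatim for $[\cdot]_{\mathcal{X}^{s}_{p}}$. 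The detour through the Korn inequality is therefore unnecessary, though harmless; it does make the lemma depend on Theorem~\ref{3inequalities}, while the paper's version is self-contained.
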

\begin{proof} The sequence $\bu_n$ will be obtained via mollification.  Let $\phi \in C^{\infty}_c(\bbR^d)$ be a standard mollifier, i.e.
\[
\phi \geq 0\,, \quad \supp \phi \subset B_1(\textbf{0})\,, \quad \intdm{\bbR^d}{\phi(\bx)}{\bx} = 1\,.
\]
For any $\delta < {1\over 4}{\dist(\supp \bu, \complement\Omega)}$, introduce $\phi_{\delta} \in C^{\infty}_c(\bbR^d)$  by 
$
\phi_{\delta}(\bx) := \frac{1}{\delta^d} \phi \left( \frac{\bx}{\delta} \right)
$. Then take 
$$
\bu_{\delta} := \bu \, \ast \, \phi_{\delta} \in C^{\infty}_c(\Omega;\bbR^d)\,. 
$$
Extending the vector fields with value zero outside of $\Omega, $ we can easily show via basic estimates that $\|\bu_n - \bu\|_{\mathcal{X}^{s}_{p}(\bbR^{d})} \to 0 $ as $\delta\to 0$.
\end{proof}
The following is an adaptation to our setting of the interpolation lemma proved in \cite{Schikorra}.

\begin{lemma}\label{lma-InterpolationLemma}
Let $B \subset \bbR^d$ be a ball . Then for any $\delta>0$ and any  $\bu \in \mathcal{X}^{s}_{p}(4B)$, we have that 
\begin{align*}
[\bu]_{\mathcal{X}^{s}_{p}(B)}^p &\leq \delta^p [\bu]_{\mathcal{X}^{s}_{p}(4B)}^p 
 +  \frac{C}{\delta^{p'}} \left(\sup_{\phi} \langle \bbL^{s}_{p, 4B}{\bf u}, \phi\rangle\right)^{p\over p-1}
+C \frac{\text{diam}(B)^{-sp}}{\delta^{p(p-1)}} \intdm{4B}{|\bu(\bx)|^p}{\bx}\,,
\end{align*}
where the supremum is over all $\phi \in C^{\infty}_c(2B;\bbR^d)$ and $[\phi]_{\mathcal{X}^{s}_{p}(\bbR^d)} \leq 1$. The constant $C >0 $ depends only on $d$, $s$, $p$, and the ellipticity constants $\alpha_1$ and $\alpha_2$. Moreover if $s_0 >0$, and $s\in (s_0, 1)$, then the constant $C$ can be made dependent on $s_0$ instead of $s$.  
\end{lemma}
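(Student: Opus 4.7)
The plan is to adapt the testing argument from \cite{Schikorra} to the projected-difference-quotient setting. Fix a cutoff $\eta \in C^\infty_c(2B)$ with $\eta \equiv 1$ on $B$, $0 \leq \eta \leq 1$, and $|\nabla \eta| \leq C/\text{diam}(B)$, and test the operator against $\eta^p \bu$. Since $\eta^p \bu \in \mathcal{X}^s_p(\bbR^d)$ has compact support strictly inside $2B$, Lemma~\ref{lma-Density} furnishes a $C^\infty_c(2B;\bbR^d)$ approximating sequence in the $\mathcal{X}^s_p(\bbR^d)$ seminorm; after normalizing and passing to the limit, with $L := \sup_\phi \langle \bbL^s_{p,4B}\bu, \phi \rangle$ as in the statement, I obtain the duality bound
\[
\langle \bbL^s_{p,4B}\bu, \eta^p \bu \rangle \;\leq\; L \cdot [\eta^p\bu]_{\mathcal{X}^s_p(\bbR^d)}.
\]

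\textbf{Symmetric decomposition and ellipticity.} A direct computation yields the symmetric identity
\[
\cD(\eta^p\bu)(\bx,\by) \;=\; \tfrac{\eta(\bx)^p+\eta(\by)^p}{2}\,\cD(\bu)(\bx,\by) \;+\; \tfrac{\eta(\bx)^p-\eta(\by)^p}{2}\bigl(\bu(\bx)+\bu(\by)\bigr)\cdot\tfrac{\bx-\by}{|\bx-\by|}.
\]
On $B \times B$ one has $\tfrac{\eta(\bx)^p + \eta(\by)^p}{2} = 1$, so combining this identity with the lower bound $A(\bx,\by) \geq \alpha_1$ I express
\begin{align*}
[\bu]^p_{\mathcal{X}^s_p(B)} &\;\leq\; \tfrac{1}{\alpha_1}\int_{4B}\!\!\int_{4B}|\cD(\bu)|^{p-2}\cD(\bu)\,\tfrac{\eta(\bx)^p+\eta(\by)^p}{2}\cD(\bu)\,\tfrac{A(\bx,\by)}{|\bx-\by|^{d+sp}}\,\mathrm{d}\bx\,\mathrm{d}\by \\
&\;=\; \tfrac{1}{\alpha_1}\langle \bbL^s_{p,4B}\bu, \eta^p \bu \rangle - \tfrac{1}{\alpha_1} E,
\end{align*}
where $E$ collects the cross term involving $(\eta(\bx)^p-\eta(\by)^p)(\bu(\bx)+\bu(\by))\cdot\tfrac{\bx-\by}{|\bx-\by|}$, so that $|E| \leq C\alpha_2\int_{4B}\!\!\int_{4B} |\cD(\bu)|^{p-1}\,|\eta(\bx)^p-\eta(\by)^p|\,(|\bu(\bx)|+|\bu(\by)|)\,|\bx-\by|^{-(d+sp)}\,\mathrm{d}\bx\,\mathrm{d}\by$.

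\textbf{Young's inequality and combining.} For the error term, I apply Young's inequality with conjugate exponents $p'=p/(p-1)$ and $p$ to the product of $|\cD(\bu)|^{p-1}|\bx-\by|^{-(d+sp)/p'}$ and $|\eta(\bx)^p-\eta(\by)^p||\bu(\bx)||\bx-\by|^{-(d+sp)/p}$, with splitting parameter $\mu = \delta^{p-1}$, so that $\mu^{p'}=\delta^p$ lands on the $|\cD(\bu)|^p/|\bx-\by|^{d+sp}$ factor and $\mu^{-p}=\delta^{-p(p-1)}$ lands on the remainder. Using the pointwise bound $|\eta(\bx)^p-\eta(\by)^p|\leq C\min(|\bx-\by|/\text{diam}(B),1)$ and splitting at $|\bx-\by|=\text{diam}(B)$, the inner integral $\int_{\bbR^d}|\eta(\bx)^p-\eta(\by)^p|^p/|\bx-\by|^{d+sp}\,\mathrm{d}\by$ is seen to be at most $C\,\text{diam}(B)^{-sp}$, yielding
\[
|E| \;\leq\; \delta^p[\bu]^p_{\mathcal{X}^s_p(4B)} \;+\; C\delta^{-p(p-1)}\text{diam}(B)^{-sp}\int_{4B}|\bu|^p\,\mathrm{d}\bx.
\]
An analogous cutoff estimate gives $[\eta^p\bu]^p_{\mathcal{X}^s_p(\bbR^d)} \leq C[\bu]^p_{\mathcal{X}^s_p(4B)} + C\,\text{diam}(B)^{-sp}\|\bu\|^p_{L^p(4B)}$, after which a second Young's inequality (with parameter $\delta$) applied to $L\cdot[\eta^p\bu]_{\mathcal{X}^s_p(\bbR^d)}$ produces $\tfrac{C}{\delta^{p'}}L^{p'}$ plus lower-order pieces absorbable into the other two terms. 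Rescaling $\delta$ to absorb the factor $\tfrac{1}{\alpha_1}$ finishes the estimate.

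\textbf{Main obstacle.} The most delicate ingredient is the cutoff estimate for $[\eta^p\bu]^p_{\mathcal{X}^s_p(\bbR^d)}$, because the projected difference quotient does not admit an obvious product rule: one must work with the non-symmetric decomposition $\cD(\eta^p\bu)(\bx,\by)=\eta(\bx)^p\cD(\bu)(\bx,\by)+(\eta(\bx)^p-\eta(\by)^p)\bu(\by)\cdot\tfrac{\bx-\by}{|\bx-\by|}$, then split the domain $\bbR^d\times\bbR^d$ into $2B\times 2B$ and the cross region $2B\times\complement(2B)$ (using the $\bx\leftrightarrow\by$ symmetry), where the zero extension of $\bu$ forces $\bu(\by)=0$ and the kernel is non-singular. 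Tracking the $s$-dependence of the resulting Gagliardo-type integrals yields constants involving $1/s$ that remain bounded for $s\in(s_0,1)$, which is the origin of the uniform-in-$s_0$ dependence claimed in the lemma.
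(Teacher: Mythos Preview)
Your proof is correct and follows the same overall Schikorra-style testing strategy as the paper, but the implementation differs in two places worth noting. The paper tests with $\vphi=\eta^2\bu$ (keeping the auxiliary $\psi=\eta\bu$) and expands $(\cD(\psi))^2$ via the quadratic identity $(ab-cd)^2=(ab-cd)(a-c)b+ab(c-a)(b-d)+(a^2b-c^2d)(b-d)$, which produces the pairing $\cD(\vphi)\cD(\bu)$ plus two error integrals; those errors are then handled by a two-step H\"older argument (exponents $p/(p-2)$, $p/2$, then Cauchy--Schwarz) leading to the product bound $C\,\text{diam}(B)^{-s}\Vnorm{\bu}_{\mathcal{X}^s_p(4B)}^{p-1}\Vnorm{\bu}_{L^p(4B)}$ before a final Young step. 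You instead test with $\eta^p\bu$ and use the polarization identity for $\cD(\eta^p\bu)$ directly, applying Young's inequality with the tuned parameter $\mu=\delta^{p-1}$ inside the error integral. Your route is slightly more economical: it bypasses the auxiliary $\psi$ and the chained H\"older/Cauchy--Schwarz, at the cost of needing the cutoff bound $[\eta^p\bu]_{\mathcal{X}^s_p(\bbR^d)}\le C\Vnorm{\bu}_{\mathcal{X}^s_p(4B)}$ explicitly (the paper records this as \eqref{eq-InterpolationLemmaMollifierEstimate} and uses it the same way). One phrasing nit: in your ``main obstacle'' paragraph it is not the zero extension of $\bu$ that vanishes on $\complement(2B)$ but that of $\eta^p\bu$; the argument is unaffected since the integrand there only sees $\eta^p(\bx)\bu(\bx)$.
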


\begin{proof}
Let $\eta \in C^{\infty}_c(2B)$, $\eta \equiv 1$ in $B$ be the usual cutoff function in $2B$.

Define
$$
\psi(\bx) := \eta(\bx)\bu(\bx)\,, \qquad \vphi(\bx) := \eta^2(\bx)\bu(\bx)\,.
$$
Then, using Lemma \ref{tech-tools} item (3),
\begin{equation}\label{eq-InterpolationLemmaMollifierEstimate}
[\psi]_{\mathcal{X}^{s}_{p}(\bbR^d)} + [\vphi]_{\mathcal{X}^{s}_{p}(\bbR^d)} \leq C \Vnorm{\bu}_{\mathcal{X}^{s}_{p}(2B)}\,.
\end{equation}
By definition of $\psi$ and using the lower bound on $K$ we have
\begin{align*}
[\bu]_{\mathcal{X}^{s}_{p}(B)}^p &\leq \alpha_1 \intdm{B}{\intdm{B}{\left| \cD(\bu)(\bx,\by) \right|^p \frac{K(\bx,\by)}{|\bx-\by|^{d+sp}}}{\bx}}{\by} \\
	&\leq \alpha_2 \intdm{4B}{\intdm{4B}{\left|  \cD(\bu)(\bx,\by) \right|^{p-2} \left( \cD(\psi)(\bx,\by) \right)^2 \frac{K(\bx,\by)}{|\bx-\by|^{d+sp}}}{\bx}}{\by}\,.
\end{align*}
We now use the following algebraic identity: for $a$, $b$, $c$, $d$ real numbers 
\[
(ab-cd)^2 = (ab-cd)(a-c)b + ab(c-a)(b-d) + (a^2 b-c^2 d)(b-d)
\]
we can expand  $\cD(\psi)(\bx,\by)^{2}$ as
\[
\begin{split}
(\cD(\psi)(\bx,\by))^{2} &= (\eta(\bx)-\eta(\by))  (\cD(\psi)(\bx,\by)) \bu(\bx)\cdot {(\by-\bdx)\over |\by-\bx|} + (\eta(\by)-\eta(\bx)) \cD(\bu)(\bx,\by) \psi(\bx)\cdot {(\by-\bdx)\over |\by-\bx|}\\
& \qquad  + (\cD(\vphi)(\bx,\by))(\cD(\bu)(\bx,\by))\,.
\end{split}
\]
Now, by adding and subtracting the appropriate quantities and
splitting the integral accordingly, we have
$$
[\bu]_{\mathcal{X}^{s}_{p}(B)}^p \leq  \langle \bbL^{s}_{p, 4B}{\bf u}, \vphi\rangle + I_1 + I_2\,,
$$
where
\begin{align*}
I_1 &:= \Lambda \intdm{4B}{\intdm{4B}{\frac{\left| \cD(\bu)(\bx,\by) \right|^{p-2}  | \eta(\bx)-\eta(\by)| \, \left| \cD(\psi)(\bx,\by) \right| \left| \bu(\bx) \cdot \frac{\bx-\by}{|\bx-\by|} \right|}{|\bx-\by|^{d+sp}}}{\bx}}{\by}\,, \\
I_2 &:= \Lambda \intdm{4B}{\intdm{4B}{\frac{\left| \cD(\bu)(\bx,\by) \right|^{p-1} |\eta(\by)-\eta(\bx)| \, \left| \psi(\bx) \cdot \frac{\bx-\by}{|\bx-\by|} \right|}{|\bx-\by|^{d+sp}}}{\bx}}{\by}\,.
\end{align*}
Note that since the vector field $\vphi \in \mathcal{X}^{s,p}(\bbR^{d})$ and that  $\supp \vphi \Subset 2B$, by the density Lemma \ref{lma-Density} 
\[
 \langle \bbL^{s}_{p, 4B}{\bf u}, \vphi\rangle \leq  \sup_{\phi}\langle \bbL^{s}_{p, 4B}{\bf u}, \phi\rangle \|\vphi\|_{\mathcal{X}^{s}_{p}(\bbR^d)} \leq C \, \sup_{\phi}\langle \bbL^{s}_{p, 4B}{\bf u}, \phi\rangle \| \bu\|_{\mathcal{X}^{s}_{p}(4B)}
\]
where the supremum is over all $\phi \in C^{\infty}_c(2B;\bbR^d)$ and $[\phi]_{\mathcal{X}^{s}_{p}(\bbR^d)} \leq 1$.
As for $I_1$, using the estimate that  $\Vnorm{\grad \eta}_{L^{\infty}} \leq C \frac{1}{\text{diam}(B)}$, 
\begin{align*}
I_1
&\leq C\, \text{diam}(B)^{-1} \intdm{4B}{\intdm{4B}{\left| \cD(\bu)(\bx,\by) \right|^{p-2} \frac{\left| \cD(\psi)(\bx,\by) \right| |\bu(\bx)|}{|\bx-\by|^{d+sp-1}}}{\bx}}{\by}\,.
\end{align*}
Let $t_2 = 1-s$. Then $d+sp -1 = d+s(p-2)+s-t_2$, and H\"older's inequality with $q = \frac{p}{p-2}$, $q' = \frac{p}{2}$ implies that
\begin{align*}
I_1&\leq C\, \text{diam}(B)^{-1} \iintdm{4B}{4B}{\frac{|\cD(\bu)(\bx,\by)|^{p-2}}{|\bx-\by|^{d ( \frac{p-2}{p}) + s(p-2)}} \cdot \frac{|\cD(\psi)(\bx,\by)| |\bu(\bx)|}{|\bx-\by|^{d (\frac{2}{p}) + s - t_2}}}{\bx}{\by} \\
& \leq C\, \text{diam}(B)^{-1} [\bu]_{\mathcal{X}^{s}_{p}(4B)}^{p-2} \left( \iintdm{4B}{4B}{\frac{|\cD(\psi)(\bx,\by)|^{p/2} |\bu(\bx)|^{p/2}}{|\bx-\by|^{d+ \frac{sp}{2} - \frac{t_2 p}{2}}}}{\bx}{\by} \right)^{2/p} \,.
\end{align*}
Applying Cauchy-Schwarz to the last integral,
\[
I_1\leq C\, \text{diam}(B)^{-1} [\bu]_{\mathcal{X}^{s}_{p}(4B)}^{p-2} [\psi]_{\mathcal{X}^{s}_{p}(4B)} \left( \iintdm{4B}{4B}{\frac{|\bu(\bx)|^p}{|\bx-\by|^{d - t_2 p}}}{\bx}{\by} \right)^{1/p}\,.
\]
Thus, since $t_2 > 0$,
\[
\iintdm{4B}{4B}{\frac{|\bu(\bx)|^p}{|\bx-\by|^{d-t_2 p}}}{\bx}{\by} \leq C\, (\text{diam}(B))^{t_2 p} \Vnorm{\bu}_{L^p(4B)}^p.
\]
Using again \eqref{eq-InterpolationLemmaMollifierEstimate}, the final estimate of $I_1$ is
\[
I_1 \leq C\, \text{diam}(B)^{-s} \Vnorm{\bu}_{\mathcal{X}^{s}_{p}(4B)}^{p-1} \Vnorm{\bu}_{L^p(4B)}.
\]
The integral $I_2$ can be estimated the same way as $I_1$. Therefore,
$$
[\bu]_{\mathcal{X}^{s}_{p}(B)}^p \leq C\,\left( \sup_{\phi}\langle \bbL^{s}_{p, 4B}{\bf u}, \phi\rangle \| \bu\|_{\mathcal{X}^{s}_{p}(4B)} + \text{diam}(B)^{-s} \Vnorm{\bu}_{\mathcal{X}^{s}_{p}(4B)}^{p-1} \Vnorm{\bu}_{L^p(4B)}\right)\,,
$$
where the supremum is over all $\phi \in C^{\infty}_c(2B;\bbR^d)$ and $[\phi]_{\mathcal{X}^{s}_{p}(\bbR^d)} \leq 1$.
From the last estimate we apply Young's inequality to obtain the result and conclude the proof. 
\end{proof}

\subsection{Higher differentiability of solutions}
In this section we prove the second main result of the paper. Before presenting the proof, we state a commutator estimate that is an adaptation of the commutator estimate established in \cite{Schikorra}. The proof of the theorem is essentially identical to the result given in \cite{Schikorra}, and we omit it here. 
\begin{theorem}\label{thm-CommutatorEstimate}
Let $s \in (0,1)$, $\veps \in [0,1-s)$. Take $B \subset \bbR^d$ a ball or all of $\bbR^d$. Let $\bu \in \mathcal{X}^{s}_{p}(B)$ and $\vphi \in C^{\infty}_c(B;\bbR^d)$. For a certain normalizing constant $c$ depending on $s$, $p$, and $\veps$ denote the commutator
$$
R_{\veps}(\bu,\phi) := \langle\bbL^{s+\veps}_{p,B} \bu, \vphi\rangle  - c \, \langle \bbL^s_{p,B} \bu, (-\Delta)^{\frac{\veps p}{2}} \vphi\rangle\,.
$$
Then there exists a constant $C_{\veps} = C(s,p,\veps,n,\Lambda) >0$ such that
$$
|R_{\veps}(\bu,\vphi)| \leq C_{\veps} \,\veps \,[\bu]_{\mathcal{X}^{s+\veps}_{p}(B)}^{p-1} [\vphi]_{\mathcal{X}^{s+\veps}_{p}(\bbR^d)}\,.
$$
Moreover, $C_\veps$ is monotone increasing in $\veps$. That is $C_\veps\leq C_{\veps_0}$ for any $0<\veps<\veps_0$. 
\end{theorem}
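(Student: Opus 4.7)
The plan is to adapt Schikorra's scalar commutator estimate to the present vector-valued setting with the projected difference quotient $\cD(\cdot)$. First I expand both duality pairings via \eqref{defn-of-operator} to combine $R_\veps(\bu,\vphi)$ into a single double integral
\[
R_\veps(\bu,\vphi) = \iintdm{B}{B}{A(\bx,\by)\,|\cD(\bu)(\bx,\by)|^{p-2}\,\cD(\bu)(\bx,\by)\,K_{\veps}(\bx,\by;\vphi)}{\bx}{\by},
\]
where the scalar kernel
\[
K_{\veps}(\bx,\by;\vphi) = \frac{\cD(\vphi)(\bx,\by)}{|\bx-\by|^{d+(s+\veps)p}} - \frac{c\,\cD((-\Delta)^{\veps p/2}\vphi)(\bx,\by)}{|\bx-\by|^{d+sp}}
\]
measures the failure of the fractional Laplacian to commute with the nonlinear operator.

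An application of H\"older's inequality (splitting the weight $|\bx-\by|^{d+(s+\veps)p}$ between the two factors) together with the ellipticity bound $A \leq \alpha_2$ reduces the proof to showing
\[
\left(\iintdm{B}{B}{|K_{\veps}(\bx,\by;\vphi)|^{p}\,|\bx-\by|^{(d+(s+\veps)p)(p-1)}}{\bx}{\by}\right)^{1/p} \leq C_{\veps}\,\veps\,[\vphi]_{\mathcal{X}^{s+\veps}_{p}(\bbR^d)},
\]
so that the factor $[\bu]_{\mathcal{X}^{s+\veps}_{p}(B)}^{p-1}$ emerges naturally from the first factor.

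To bound the remaining integral, the plan is to use the singular-integral representation $(-\Delta)^{\veps p/2}\vphi(\bx) = c_{\veps p,d}\,\mathrm{p.v.}\intdm{\bbR^d}{(\vphi(\bx)-\vphi(\bz))/|\bx-\bz|^{d+\veps p}}{\bz}$, valid for $\veps p \in (0,2)$, and to choose the normalization $c$ as the reciprocal of $c_{\veps p,d}$ so that the leading-order singularities of $K_\veps$ cancel when symmetrized in $\bx \leftrightarrow \by$. The required factor of $\veps$ emerges from the linear vanishing $c_{\veps p,d} \sim \veps p$ as $\veps \to 0^+$ (reflecting $(-\Delta)^{0} = \mathrm{Id}$), or equivalently from the pointwise expansion $t^{\veps p} = 1 + \veps p \log t + O(\veps^2)$. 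A Schur-type estimate on the residual kernel, combined with the fractional Korn inequality (Theorem \ref{theorem-korn}) to convert any $W^{s+\veps,p}$-type bound back into the $\mathcal{X}^{s+\veps}_{p}$ seminorm, then yields the required inequality. The monotonicity of $C_{\veps}$ in $\veps$ follows by tracking the constants throughout and using that all potential-theoretic integrals remain uniformly bounded on any compact subinterval of $(0,1-s)$.

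The principal obstacle relative to Schikorra's scalar argument is the presence of the projection $(\bx-\by)/|\bx-\by|$ inside $\cD(\vphi)$, which prevents a direct reuse of his pointwise identities. This is overcome by exploiting that the projection is antisymmetric under $\bx \leftrightarrow \by$, so that the cancellation structure which underpins Schikorra's symmetrization carries over intact. A secondary technical point is that the target norm is the $\mathcal{X}^{s+\veps}_{p}$ seminorm rather than the larger $W^{s+\veps,p}$ seminorm; Theorem \ref{theorem-korn} allows this conversion at the cost of an additional universal constant, after which the final inequality $|R_\veps(\bu,\vphi)| \leq C_\veps\,\veps\,[\bu]_{\mathcal{X}^{s+\veps}_p(B)}^{p-1}[\vphi]_{\mathcal{X}^{s+\veps}_p(\bbR^d)}$ follows.
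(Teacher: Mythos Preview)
The paper itself omits this proof entirely, pointing to \cite{Schikorra} and noting that the argument is ``essentially identical''. Your outline is in the right spirit: the H\"older step cleanly isolates $[\bu]_{\mathcal{X}^{s+\veps}_p(B)}^{p-1}$, and because the projection $(\bx-\by)/|\bx-\by|$ factors out of both terms in $K_\veps$, the residual estimate reduces componentwise to Schikorra's scalar commutator bound, after which the fractional Korn inequality (Theorem~\ref{theorem-korn}) legitimately converts $[\vphi]_{W^{s+\veps,p}}$ to $[\vphi]_{\mathcal{X}^{s+\veps}_p}$ on the right-hand side.

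There is, however, a genuine gap in your explanation of the normalizing constant and the source of the $\veps$ factor. Your proposal $c = 1/c_{\veps p,d}$ with $c_{\veps p,d}\sim \veps p$ would force $c\to\infty$ as $\veps\to 0$; but since the stated bound gives $R_0=0$ at $\veps=0$ (and $(-\Delta)^0=\mathrm{Id}$), one must have $c\to 1$, so this identification cannot be correct. The $\veps$ does not come from any vanishing normalization constant; it comes from a pointwise estimate on the \emph{difference}
\[
\frac{\vphi(\bx)-\vphi(\by)}{|\bx-\by|^{\veps p}} \;-\; c\big((-\Delta)^{\veps p/2}\vphi(\bx)-(-\Delta)^{\veps p/2}\vphi(\by)\big),
\]
obtained by inserting the singular-integral representation of $(-\Delta)^{\veps p/2}$, symmetrizing, and bounding the resulting triple-integral kernel. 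That analysis \emph{is} the proof --- it is Schikorra's main technical lemma --- and calling it ``a Schur-type estimate on the residual kernel'' skips over the only substantive step. Your sketch correctly identifies what needs to be shown after H\"older, but does not actually show it; to complete the argument you must carry out (or at least invoke precisely) the kernel decomposition and cancellation analysis from \cite{Schikorra}.
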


\begin{proof}[Proof of Theorem \ref{Schikorra-theorem}]
Let $\Omega_{0}\Subset\Omega_1\Subset \Omega_2\Subset\Omega$ be given, and $\eta\in C_{c}^{\infty}(\Omega_1)$ such that $\eta=1$ in $\Omega_0$.  Let $\tilde{\bu} = \eta \bu$.  The proof of the theorem will be done in two steps. 

\textbf{Step 1.} In this step we establish that there exists $\veps_0 \in (0, 1-s)$ such that for any $0 < \veps < \veps_0$, we have 
\begin{equation}\label{first-step-estimate}
\|\tilde{\bu}\|^{p-1}_{\mathcal{X}^{s+\veps}_{p}(\bbR^d)} \leq C \left(\|\tilde{\bu}\|^{p-1}_{\mathcal{X}^{s}_{p}(\bbR^{d})} + \|\bbL^{s}_{p,\Omega_2} \tilde{\bu}\|_{[\mathring{\mathcal{X}}^{s-\veps(p-1)}_{p}(\Omega_{2})]^{\ast}} \right). 
\end{equation}
We apply the technique and the argument in \cite{Schikorra}.  First notice that since the support of $\tilde{\bu}$ is contained in $\Omega_1$, we have that $\|\tilde{\bu}\|^{p-1}_{\mathcal{X}^{s+\veps}_{p}(\bbR^d)} = \|\tilde{\bu}\|^{p-1}_{\mathcal{X}^{s+\veps}_{p}(\Omega)}$. 
Next, find finitely many balls $(B_k)_{k=1}^N \subset \Omega_2$ so that $\bigcup_{k=1}^N  B_k \supset \Omega_1$. We also may assume that $\bigcup_{k=1}^N  10 B_k \subset \Omega_2$. 
Then we have 
\begin{align*}
\|\tilde{\bu}\|^{p}_{\mathcal{X}^{s+\veps}_{p}(\Omega)}
&= \iintdm{\Omega}{\Omega_1}{\frac{|(\tilde{\bu}(\by)-\tilde{\bu}(\bx))\cdot {\by -\bx \over |\by-\bx|} |^{p}}{|\by-\bdx|^{d+(s+\veps)p}} }{\bx}{\by} + \iintdm{\Omega}{\Omega \setminus \Omega_1}{\cdots}{\bx}{\by} \\
&\leq \sum_{k=1}^N \iintdm{\Omega}{B_k}{\cdots}{\bx}{\by} + \iintdm{\Omega_0}{\Omega \setminus \Omega_1}{\frac{|\tilde{\bu}(\by)|^p}{|\bx-\by|^{d+(s+\veps)p}}}{\bx}{\by} \\
&\leq \sum_{k=1}^N \iintdm{2B_k}{B_k}{\cdots}{\bx}{\by} + \sum_{k=1}^N \iintdm{\Omega \setminus 2B_k}{B_k}{\cdots}{\bx}{\by} + C_\veps \Vnorm{\bu}_{L^p(\Omega)}^p \\
&\leq \sum_{k=1}^N [\tilde{\bu}]_{\mathcal{X}^{s+\veps}_{p}(2B_k)}^p + C(\veps)\Vnorm{\bu}_{L^p(\Omega)}^p\,.
\end{align*}
This is because the second term on the second line and the second term on the third line have disjoint support in the integrals. When using the constant $C(\veps)$ we are emphasizing that the constant depends on $\veps$, and it may also depend on other quantities.

Using Lemma \ref{lma-InterpolationLemma} and the fact that the union of the finite number of balls $B_k$ cover no more than $\Omega$, we get for any $\delta> 0$
$$
\|\tilde{\bu}\|^{p}_{\mathcal{X}^{s+\veps}_{p}(\Omega)} \leq  \delta^p \|\tilde{\bu}\|^{p}_{\mathcal{X}^{s+\veps}_{p}(\Omega)} + C({\delta}, \veps) \Vnorm{\bu}_{L^p(\Omega)}^p + C \sum_{k=1}^N \delta^{-p'} \left( \sup_{\phi} \langle \bbL^{s+\veps}_{p,8B_k}\tilde{\bu}, \phi\rangle \right)^{\frac{p}{p-1}}
$$
where the supremum is over all $\phi \in C^{\infty}_c(4B_k;\bbR^d)$ with $[\phi]_{\mathcal{X}^{s+\veps}_{p}(\bbR^d)} \leq 1$. Choosing $\delta$ sufficiently small we can estimate
$$
\|\tilde{\bu}\|^{p}_{\mathcal{X}^{s+\veps}_{p}(\Omega)} \leq C(\veps)\Vnorm{\bu}_{L^p(\Omega)}^p +  C \sum_{k=1}^N \left( \sup_{\phi} \langle \bbL^{s+\veps}_{p,8B_k}\tilde{\bu}, \phi\rangle \right)^{\frac{p}{p-1}}
\,,
$$
where the supremum is over all $\phi \in C^{\infty}_c(4B_k;\bbR^d)$ with $[\phi]_{\mathcal{X}^{s+\veps}_{p}(\bbR^d)} \leq 1$.
With Theorem \ref{thm-CommutatorEstimate}, adding and subtracting $\langle\bbL^s_{p,8B_k}\tilde{\bu}, (-\Delta)^{\frac{\veps p}{2}} \phi\rangle$ we can estimate by
\[
\begin{split}
\|\tilde{\bu}\|^{p}_{\mathcal{X}^{s+\veps}_{p}(\Omega)} &\leq C_1(\veps) \Vnorm{\bu}_{L^p(\Omega)}^p + \veps^{\frac{p}{p-1}}  \,C_2(\veps)\,\|\tilde{\bu}\|^{p}_{\mathcal{X}^{s+\veps}_{p}(\Omega)} \\
&+ C\sum_{k=1}^N \left( \sup \left\lbrace \left| \langle\bbL^s_{p,8B_k}\tilde{\bu}, (-\Delta)^{\frac{\veps p}{2}} \phi\rangle \right| \, : \, \phi \in C^{\infty}_c(4B_k;\bbR^d), [\phi]_{\mathcal{X}^{s+\veps}_{p}(\bbR^d)} \leq 1 \right\rbrace \right)^{\frac{p}{p-1}}\,.
\end{split}
\]
Notice from Theorem \ref{thm-CommutatorEstimate} that for  $\veps_0$ small enough, we have that $C_2(\veps) \leq C_{2}(\veps_0)$ for all $\veps \in (0, \veps_0)$ and therefore  we can absorb the right-hand side term involving $\|\tilde{\bu}\|^{p}_{\mathcal{X}^{s+\veps}_{p}(\Omega)}$ on the left-hand side for $\veps \in (0,\veps_0)$. The estimate becomes
\begingroup\makeatletter\def\f@size{9.75}\check@mathfonts
$$
\|\tilde{\bu}\|^{p}_{\mathcal{X}^{s+\veps}_{p}(\Omega)} \leq C(\veps) \Vnorm{\bu}_{L^p(\Omega)}^p + \sum_{k=1}^N \left( \sup \left\lbrace \left|\langle\bbL^s_{p,8B_k}\tilde{\bu}, (-\Delta)^{\frac{\veps p}{2}} \phi\rangle \right| \, : \, \phi \in C^{\infty}_c(4B_k;\bbR^d), [\phi]_{\mathcal{X}^{s+\veps}_{p}(\bbR^d)} \leq 1 \right\rbrace \right)^{\frac{p}{p-1}}\,.
$$
\endgroup
Now for a given $\phi \in C^{\infty}_c(4B_k;\bbR^d)$ such that $[\phi]_{\mathcal{X}^{s+\veps}_{p}(\bbR^d)} \leq 1$ and  for each $1 \leq k \leq N$ define
$$
\Phi = \rho_k (-\Delta)^{\frac{\veps p}{2}}\phi\,,
$$
where $\rho_k \in C^{\infty}_c(6B_k)$ and $\rho_k \equiv 1$ on $5B_k$. Then by Lemma \ref{tech-tools} item 2) we have that $\Phi \in C^{\infty}_c(6B_k)$, and 
\begin{equation}\label{eq-TestFxnEstimate1}
[\Phi]_{\mathcal{X}^{s-\veps(p-1)}_{p}(\bbR^d)} \leq C_k [\phi]_{\mathcal{X}^{s+\veps}_{p}(\bbR^d)} \leq C_k\,.
\end{equation}
Now, the disjoint support of $(1-\rho_k)$ and $\phi$ implies, via Lemma \ref{tech-tools} item 1), that 
\begin{equation}\label{eq-TestFxnEstimate2}
\begin{split}
\Vnorm{\grad \left( (1-\rho_k)(-\Delta)^{\frac{\veps p}{2}} \phi \right)}_{L^{\infty}(8B_k)} &\leq \Vnorm{-\grad \eta_k \cdot (-\Delta)^{\frac{\veps p}{2}} \phi}_{L^{\infty}(8B_k)} + \Vnorm{(1-\rho_k) (-\Delta)^{\frac{1+\veps p}{2}}\phi}_{L^{\infty}(8B_k)} \\
&\leq C_k \Vnorm{(-\Delta)^{\frac{\veps p}{2}} \phi}_{L^{\infty}(6B_k \setminus 5B_k)} + C_k \Vnorm{( -\Delta)^{\frac{1+\veps p}{2}}\phi}_{L^{\infty}(8B_k \setminus 5B_k)} \\
&{\leq} C \text{diam}(B_k)^{-d-\veps p} |4B_k|^{\frac{p-1}{p}} \Vnorm{\phi}_{L^p(4B_k)} \\
&\qquad + \text{diam}(B_k)^{-d-1-\veps p}|4B_k|^{\frac{p-1}{p}} \Vnorm{\phi}_{L^p(4B_k)} \\
&\leq C_k[\phi]_{\mathcal{X}^{s+\veps}_{p}(\bbR^d)}\,,
\end{split}
\end{equation}
where we have used  the Poincar\'e-Korn and fractional Korn inequalities.  
As a consequence of \eqref{eq-TestFxnEstimate1} and \eqref{eq-TestFxnEstimate2}, an application of H\"older's inequality gives us
\begin{align*}
\left| \langle\bbL^s_{p,8B_k}\tilde{\bu}, (-\Delta)^{\frac{\veps p}{2}}\phi - \Phi \rangle \right| &\leq \Vnorm{\grad \left( (1-\rho_k)(-\Delta)^{\frac{\veps p}{2}} \phi \right)}_{L^{\infty}(8B_k)} \iintdm{8B_k}{8B_k}{\frac{ \left| \cD(\bu)(\bx,\by) \right|^{p-1}}{|\bx-\by|^{d+sp-1}}}{\bx}{\by} \\
&\leq C [\tilde{\bu}]_{\mathcal{X}^{s}_{p}(8B_k)}^{p-1} \left( \iintdm{8B_k}{8B_k}{\frac{1}{|\bx-\by|^{d+p(s-1)}}}{\bx}{\by} \right)^{1/p} \leq C[{\tilde{\bu}}]_{\mathcal{X}^{s}_{p}(\bbR^{d})}^{p-1}\,.
\end{align*}
With the above, adding and subtracting $\langle\bbL^s_{p,8B_k}\tilde{\bu}, \Phi\rangle$ and using the properties of $\Phi$ shown above, our estimate becomes
$$
\|\tilde{\bu}\|^{p}_{\mathcal{X}^{s+\veps}_{p}(\Omega)} \leq C \Vnorm{\tilde{\bu}}_{\mathcal{X}^{s}_{p}(\bbR^d)}^p + \sum_{k=1}^N \left( \sup \left\lbrace \left| \langle\bbL^s_{p,8B_k}\tilde{\bu}, \psi\rangle \right| \, : \, \psi \in C^{\infty}_c(6B_k;\bbR^d), \, [\psi]_{\cX^{s-\veps(p-1)}_{p}(\bbR^d)} \leq C_k \right\rbrace \right)^{\frac{p}{p-1}}\,.
$$
Lastly, we need to transform the support of the operator $\cL$ from $8B_k$ to $\Omega_2$. Since $\supp \psi \subset 6B_k$, the disjoint support of the integrals gives (using H\"older's inequality and then the Poincar\'e and Korn inequalities on the $\psi$ integral)
\begin{align*}
|\langle\bbL^s_{p,8B_k}\tilde{\bu}, \psi\rangle - \langle\bbL^s_{p,\Omega_2}\tilde{\bu}, \psi\rangle| &\leq 2 \iintdm{\Omega \setminus 8B_k}{6B_k}{\frac{ \left| \cD(\bu)(\bx,\by) \right|^{p-1} \left| \cD(\psi)(\bx,\by) \right| }{|\bx-\by|^{d+sp}}}{\bx}{\by} \\
&\leq C_k[\tilde{\bu}]_{\mathcal{X}^{s}_{p}(\bbR^d)}^{p-1} [\psi]_{\mathcal{X}^{s-\veps(p-1)}_{p}(\bbR^d)}\,.
\end{align*}
Therefore,
\[
\begin{split}
\|\tilde{\bu}\|^{p}_{\mathcal{X}^{s+\veps}_{p}(\Omega)} &\leq C \Vnorm{\tilde{\bu}}_{\mathcal{X}^{s}_{p}(\bbR^d)}^p + \sum_{k=1}^N \left( \sup \left\lbrace \left| \langle\bbL^s_{p,\Omega_2}\tilde{\bu}, \psi\rangle\right| \, : \, \psi \in C^{\infty}_c(6B_k;\bbR^d), \, [\psi]_{\cX^{s-\veps(p-1)}_{p}(\bbR^d)} \leq C_k \right\rbrace \right)^{\frac{p}{p-1}} \\
&\leq C\left( \Vnorm{\tilde{\bu}}_{\mathcal{X}^{s}_{p}(\bbR^d)}^p  + \|\bbL^s_{p,\Omega_2}\tilde{\bu}\|^{p\over p-1}_{ [\mathring{\mathcal{X}}^{s-\veps(p-1)}_{p} (\Omega_2)]^{\ast}}\right)\,.
\end{split}
\]

\textbf{Step 2.} In this step we estimate the right-hand side of \eqref{first-step-estimate} in terms of the dual norm of $\bF$ and $\|\bu\|_{\mathcal{X}^{s}_{p}(\Omega)}$.  
By computation it is not difficult to show that $[\tilde{\bu}]_{\mathcal{X}^{s}_{p}(\bbR^d)} \leq C \|\bu\|_{\mathcal{X}^{s}_{p}(\Omega)}.$ Moreover, 
one can also prove that
\[
\|\bbL^s_{p,\Omega_2}\tilde{\bu}\|^{p\over p-1}_{ [\mathring{\mathcal{X}}^{s-\veps(p-1)}_{p} (\Omega_2)]^{\ast}} \leq \|\bF\|^{p\over p-1}_{ [\mathring{\mathcal{X}}^{s-\veps(p-1)}_{p} (\Omega)]^{\ast}} + \|\bu\|^{p}_{\mathcal{X}^{s}_{p}(\Omega)}.
\]
Indeed, this is possible to show by using the same argument as in \cite[Localization  Lemma]{Schikorra} and in fact prove that for any $t\in (2s-1, s)$ we have 
\[
\|\bbL^{s}_{p,\Omega_2} \tilde{\bu}\|_{[\mathring{\mathcal{X}}^{t}_{p}(\Omega_{2})]^{\ast}} \leq C \left(\|\bbL^{s}_{p,\Omega} \bu\|_{[\mathring{\mathcal{X}}^{t}_{p}(\Omega)]^{\ast}}   
+ \|\bu\|^{p-1}_{\mathcal{X}^{s}_{p}(\Omega)}\right). 
\]
The proof is complete.
\end{proof}

\appendix
\section{The Fourier Transform of the Poisson-Type Kernel $\mathbb{P}_t$}
Here we obtain the Fourier transform of the Poisson-type kernel $\mathbb{P}_t$ that has been used to establish relations between various Poisson integrals. 
Recall that the $(d+1) \times (d+1)$ matrix  $\mathbb{P}_t(\bx) = (\mathfrak{p}_{t}^{jk})$ has the form 
\[
	\begin{bmatrix}
	\widetilde{\bbP}_t(\bx) & \bP_t^{[d+1]}(\bx) \\
	\bP_t^{[d+1]}(\bx) & \mathfrak{p}^{d+1, d+1}_t(\bx) \\ 
	\end{bmatrix}\,.
\]
Here, $\widetilde{\bbP}_t(\bx) : \bbR^d \to \bbM_d(\bbR)$ is a  $d\times d$ matrix function given by 
\begin{equation}\label{ddpoissonmatrix}
\widetilde{\bbP}_t(\bx) = \frac{2(d+1)}{\omega_d} \frac{t}{(|\bx|^2 + t^2)^{\frac{d+3}{2}}} \bx \otimes \bx\,.
\end{equation}
The function $\bP_t^{d+1}: \bbR^d \to \bbR^d$ is a vector valued function, which we consider both a row and column vector, given by  
\begin{equation}\label{dpoissonvector}
\bP_t^{[d+1]}(\bx) = \frac{2(d+1)}{\omega_d} \frac{t^2 \bx}{(|\bx|^2 + t^2)^{\frac{d+3}{2}}}\,.
\end{equation} 
Finally the $(d+1)\times (d+1)$ entry is given by the function $\mathfrak{p}^{d+1,d+1}_t : \bbR^d \to \bbR$  defined as
\begin{equation}\label{poissonscalar}
\mathfrak{p}^{d+1,d+1}_t(\bx) = \frac{2(d+1)}{\omega_d} \frac{t^3 }{(|\bx|^2 + t^2)^{\frac{d+3}{2}}}\,.
\end{equation}
We compute the Fourier transform of each of these functions and put those transforms together to obtain the Fourier transform of  $\mathbb{P}_t$. 
We begin by writing some useful  Fourier transform formulas.  Rather than write the calculations explicitly in-line each time during a proof, we instead reference the formulas in their full generality. For completeness they are listed here and their proofs can be found in many textbooks, for example \cite{Mitrea-book}.  
\begin{itemize}
\item Let $n \in \bbN$, $\lambda \in (0,n)$, $f_{\lambda}(\bx) = |\bx|^{-\lambda}$, $\bx\in \mathbb{R}^{n} $. Then,
$
\cF \left( f_{\lambda} \right)(\bfxi) = \frac{\Gamma(\frac{n-\lambda}{2})}{\Gamma(\frac{\lambda}{2})} \pi^{\lambda-n/2} |\bfxi|^{\lambda-n}\,.
$

\item Let $n \in \bbN$, $n \geq 3$. Then for each $j$, $k \in \{ 1, 2, \ldots, n \}$ we have
\begin{equation}\label{Fourierformula1} 
\cF^{-1} \left( \frac{\xi_j \xi_k}{|\bfxi|^4} \right) = 4 \pi^2 \left[ \frac{1}{2(n-2)\omega_{n-1}} \cdot \frac{\delta_{jk}}{|\bx|^{n-2}} - \frac{1}{2 \omega_{n-1}} \cdot \frac{x_j x_k}{|\bx|^n} \right] \qquad \text{ in } \cS'(\bbR^n)\,,
\end{equation}
and as a consequence we have that 
\begin{equation}\label{Fourierformula2}
\cF \left( \frac{x_j x_k}{|\bx|^n} \right) = \frac{1}{4 \pi^2} \left[ \omega_{n-1} \frac{\delta_{jk}}{|\bfxi|^2} - 2 \omega_{n-1} \frac{\xi_j \xi_k}{|\bfxi|^4} \right] \qquad \text{ in } \cS'(\bbR^n)\,.
\end{equation}
\item 
For $a \in (0,\infty)$ and $x \in \bbR$, define $f(x) = \frac{1}{x^2 + a^2}$ and $g(x) = \frac{x}{x^2+a^2}$. Then
\begin{equation}\label{Fourierformula3}
\cF (f)(\xi) = \frac{\pi}{a} \e^{-2 \pi a |\xi|} \quad \text{ for every } \xi \in \bbR \text{ and in } \cS'(\bbR)\,,
\end{equation}
and
\begin{equation} \label{Fourierformula4}
\cF (g)(\xi) = - \pi \imath \, \sgn(\xi) \, \e^{-2 \pi a |\xi|} \quad  \text{ in } \cS'(\bbR)\,.
\end{equation}
\end{itemize}
We assume throughout that $d \geq 2$.
\begin{proposition}
For every $t>0$, we have the following:
\begin{itemize}
\item[1)] $
\cF_{\bx}(\widetilde{\bbP}_t)(\bfxi) = \e^{-2 \pi |\bfxi| t} \, \bbI_d - (2 \pi |\bfxi| t) \e^{-2 \pi |\bfxi| t} \left( \frac{\bfxi \otimes \bfxi}{|\bfxi|^2} \right) \quad \text{ in } \quad \cS'(\bbR^d)\,.$
\item[2)] $\cF_{\bx}(\bP_t^{[d+1]})(\bfxi)  = (2 \pi |\bfxi| t) \e^{-2 \pi |\bfxi| t} \left( -\imath \frac{\bfxi}{|\bfxi|} \right) \quad \text{ in } \quad \cS'(\bbR^d)\,.$
\item[3)] $\cF_{\bx}(\mathfrak{p}^{d+1,d+1}_t)(\bfxi) = \left( 1 + 2 \pi |\bfxi| t \right) \e^{-2 \pi |\bfxi| t} \quad \text{ in } \quad \cS'(\bbR^d)\,.$
\end{itemize}
\end{proposition}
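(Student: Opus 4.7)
The plan is to reduce every entry of $\bbP_t$ to the classical Poisson kernel $p_t(\bx)=\tfrac{2}{\omega_d}\tfrac{t}{(|\bx|^2+t^2)^{(d+1)/2}}$, whose Fourier transform $\cF_{\bx}(p_t)(\bfxi)=\e^{-2\pi|\bfxi|t}$ is already recorded in Section 2. The common fulcrum is the elementary identity
\[
\partial_{x_j}p_t(\bx)=-\frac{2(d+1)}{\omega_d}\,\frac{t\,x_j}{(|\bx|^2+t^2)^{(d+3)/2}},
\qquad
\partial_t p_t(\bx)=\frac{2}{\omega_d}\left(\frac{1}{(|\bx|^2+t^2)^{(d+1)/2}}-(d+1)\frac{t^2}{(|\bx|^2+t^2)^{(d+3)/2}}\right),
\]
which shows that the exponent $(d+3)/2$ appearing throughout the entries of $\bbP_t$ is produced by a single derivative in either $x_j$ or $t$ acting on $p_t$. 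Once this is noticed, each of the three identities reduces to a short computation, and $\cF_{\bx}$ transports the differentiation in a standard way.

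For the scalar corner entry (part 3), multiplying the second identity above by $t$ and rearranging gives $\mathfrak{p}^{d+1,d+1}_t = p_t - t\,\partial_t p_t$. Taking $\cF_{\bx}$ and using $\partial_t\e^{-2\pi|\bfxi|t}=-2\pi|\bfxi|\e^{-2\pi|\bfxi|t}$ yields $(1+2\pi|\bfxi|t)\e^{-2\pi|\bfxi|t}$ at once. For the vector column (part 2), the first identity displays $\bP^{[d+1]}_{t,j}(\bx)=-t\,\partial_{x_j}p_t(\bx)$, so $\cF_{\bx}(\bP^{[d+1]}_{t,j})(\bfxi)=-2\pi i\,t\,\xi_j\,\e^{-2\pi|\bfxi|t}$, which is exactly the $j$-th component of $(2\pi|\bfxi|t)(-i\bfxi/|\bfxi|)\e^{-2\pi|\bfxi|t}$. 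For the $d\times d$ block (part 1), I will differentiate $x_j p_t$ in $x_k$ and collect the two resulting terms to obtain
\[
\partial_{x_k}\bigl(x_j p_t\bigr)(\bx)=\delta_{jk}\,p_t(\bx)-\bigl(\widetilde{\bbP}_t\bigr)_{jk}(\bx),
\qquad \text{i.e.,}\qquad \bigl(\widetilde{\bbP}_t\bigr)_{jk}=\delta_{jk}\,p_t-\partial_{x_k}(x_j p_t).
\]
Applying $\cF_{\bx}$, using the standard dual identity $\cF_{\bx}(x_j f)(\bfxi)=\tfrac{i}{2\pi}\partial_{\xi_j}\cF_{\bx}(f)(\bfxi)$ with $f=p_t$ to compute $\cF_{\bx}(x_j p_t)(\bfxi)=-i\,t\,\xi_j/|\bfxi|\,\e^{-2\pi|\bfxi|t}$, and then multiplying by $2\pi i\xi_k$ to realize the $\partial_{x_k}$, produces $\delta_{jk}\e^{-2\pi|\bfxi|t}-(2\pi|\bfxi|t)\tfrac{\xi_j\xi_k}{|\bfxi|^2}\e^{-2\pi|\bfxi|t}$, which is exactly the $(j,k)$-entry of the claim.

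There is no serious obstacle. Each entry of $\bbP_t$ is a smooth $L^1\cap L^\infty(\bbR^d)$ function for any fixed $t>0$, so every Fourier transform in sight is a classical absolutely convergent integral and the differentiation-through-$\cF$ manipulations are legitimate without distributional subtleties. The only place warranting attention is the sign and constant bookkeeping in the identity $\cF_{\bx}(x_j f)(\bfxi)=\tfrac{i}{2\pi}\partial_{\xi_j}\cF_{\bx}(f)(\bfxi)$ and in the two structural identities above, which I will verify once at the outset and then invoke in all three parts.
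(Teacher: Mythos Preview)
Your proof is correct and takes a genuinely different, more elementary route than the paper. The paper computes $\cF_{\bx}(\widetilde{\bbP}_t)$ and $\cF_{\bx}(\bP_t^{[d+1]})$ by first passing to the full $(d{+}1)$-dimensional Fourier transform in $(\bx,t)$ (invoking the tabulated formula for $\cF(x_jx_k/|\bx|^n)$ in $\bbR^{d+1}$) and then applying a one-dimensional inverse transform in the $t$-dual variable; for item~3) the paper instead uses the algebraic identity $\mathfrak{p}_t^{d+1,d+1}=(d{+}1)p_t-\sum_{j}\mathfrak{p}_t^{jj}$ and feeds in item~1). You bypass all of this by expressing each entry of $\bbP_t$ directly in terms of $p_t$ and its first derivatives in $\bx$ or $t$, so that only the single known transform $\cF_{\bx}(p_t)=\e^{-2\pi|\bfxi|t}$ together with the rules $\cF(\partial_{x_k}f)=2\pi i\xi_k\cF(f)$ and $\cF(x_j f)=\tfrac{i}{2\pi}\partial_{\xi_j}\cF(f)$ are needed. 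Your approach is shorter, stays entirely in $\bbR^d$, and avoids the auxiliary transform formulas the paper has to import.

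One small caveat: your claim that ``every Fourier transform in sight is a classical absolutely convergent integral'' is not quite right for the intermediate object $x_j p_t$, which behaves like $|\bx|^{-d}$ at infinity and hence lies in $L^2\setminus L^1$. The identity $\cF(x_j p_t)(\bfxi)=-it\,\xi_j|\bfxi|^{-1}\e^{-2\pi|\bfxi|t}$ therefore has to be read in $\cS'(\bbR^d)$ (which is fine, since $\e^{-2\pi|\bfxi|t}$ is Lipschitz and its distributional gradient is the locally integrable function you wrote). This does not affect the conclusion, because the combination $(\widetilde{\bbP}_t)_{jk}=\delta_{jk}p_t-\partial_{x_k}(x_jp_t)$ is genuinely in $L^1$ and the final equality holds as stated in $\cS'$.
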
 
\begin{proof}[Proof of Item 1)]
Let $j,k \in \{ 1,2,\ldots, d\}$. Since $\widetilde{\bbP}_t = (\mathfrak{p}_t^{jk}) \in L^p(\bbR^d;\bbM_d(\mathbb{R}))$ for every $1 \leq p \leq \infty$  we have that $\mathfrak{p}_t^{jk} \in \cS'(\bbR^d)$ for every $t>0$. Thus, its Fourier transform  is a well-defined object in $\cS'(\bbR^d)$ and agrees with its Fourier transform as a function in $L^1(\bbR^d)$. The plan is to make use of partial Fourier transforms. Specifically, we will compute $\cF_{\bx}\left( \mathfrak{p}_t^{jk} \right)(\bfxi)$ in $\cS'(\bbR^d)$ by using the Fourier transform of ${\mathfrak{p}}^{jk}$ in $\cS'(\bbR^{d+1})$. To that end, we first compute the Fourier transform of ${\mathfrak{p}}^{jk}_t(\bx) = \frac{2(d+1)}{\omega_d} \frac{x_j x_k t}{(|\bx|^2 + t^2)^{d+3\over 2}}$ in $\cS'(\bbR^{d+1})$.  We use several properties of the Fourier transform. We denote the Fourier variables in $\mathbb{R}^{d+1}$ by $(\bfxi, \eta)$. 

Using the observation that  ${\mathfrak{p}}^{jk}_t(\bx) = -{2\over\omega_d} {\partial\over \partial t}\left( \frac{x_j x_k}{(|\bx|^2 + t^2)^{d+1\over 2}}\right)$, we have that  
\begin{equation}\label{eq-FourierTransformProofCase1-Equation1}
\begin{split}
\cF_{\bx,t}(\overline{\mathfrak{p}_t}^{jk})(\bfxi, \eta) = \frac{4 \pi \imath \eta}{\omega_d}\cF_{\bx,t} \left( \frac{x_j x_k}{(|\bx|^2 + t^2)^{d+1\over 2}} \right) &= - \frac{4 \pi \imath \eta}{\omega_d} \cdot \frac{1}{4 \pi^2} \left[ \omega_d \frac{\delta_{jk}}{|\bfxi|^2 + \eta^2} - 2 \omega_d \frac{(\bfxi, \eta)_j (\bfxi, \eta)_k}{(|\bfxi|^2 + \eta^2)^2} \right] \\
&= -\frac{\imath}{\pi} \eta \left[ \frac{\delta_{jk}}{|\bfxi|^2 + \eta^2} - 2 \frac{(\bfxi, \eta)_j (\bfxi, \eta)_k}{(|\bfxi|^2 + \eta^2)^2} \right]\,.
\end{split}
\end{equation}
where in the second equality we have applied the Fourier transform formula \eqref{Fourierformula2} with $n=d+1$.  
By taking partial inverse Fourier transform in $\eta$ and using the definition of $\mathbb{P}_t$, we see that for $j,k\in \{1, 2, \dots, d\}$
$$
\cF_\bx({\mathfrak{p}_t}^{jk})(\bfxi) = \cF^{-1}_{\eta} \big( \cF_{\bx,t}({\mathfrak{p}_t}^{jk})(\bfxi,\eta) \big)(t)
$$
for every $\bfxi \in \bbR^d \setminus \{ 0 \}$ and for every $t>0$. Our task is to compute the right hand side. Applying $\cF^{-1}_{\eta}$ to the right hand side of \eqref{eq-FourierTransformProofCase1-Equation1} we obtain for $j,k\in \{1, 2, \dots, d\}$ that 
\begin{equation}\label{eq-FourierTransformOfP-proof-equalityI}
\begin{split}
\cF^{-1}_{\eta} \left( - \frac{\imath}{\pi} \left[ \frac{\eta \delta_{jk}}{(|\bfxi|^2 + \eta^2)} - 2 \frac{\eta \xi_j \xi_k}{(|\bfxi|^2+\eta^2)^2} \right] \right) (t) &= \frac{-\imath}{\pi} \cF^{-1}_{\eta} \left( \frac{\eta \delta_{jk}}{(|\bfxi|^2 + \eta^2)} \right) (t) +\frac{2\imath}{\pi} \cF^{-1}_{\eta} \left( \frac{\eta \xi_j \xi_k}{(|\bfxi|^2+\eta^2)^2} \right) (t) \\
&= \frac{-\imath \delta_{jk}}{\pi} \cF_{\eta} \left( \frac{-\eta}{|\bfxi|^2+\eta^2} \right)(t) + \frac{2\imath \xi_j \xi_k}{\pi} \cF_{\eta} \left( \frac{-\eta}{(|\bfxi|^2+\eta^2)^2} \right)(t)\\
\end{split}
\end{equation}
where in the in the second inequality we used that $\cF(\cF(f))(\bx) = f(-\bx)$.  We will use the formula \eqref{Fourierformula4} to compute and simplify the first term of \eqref{eq-FourierTransformOfP-proof-equalityI} as 
\[
\frac{-i \delta_{jk}}{\pi} \cF_{\eta} \left( \frac{-\eta}{|\bfxi|^2+\eta^2} \right)(t) = \frac{i \delta_{jk}}{\pi} \left( -\pi i \sgn(t) \e^{-2 \pi |\bfxi| |t|} \right) = \delta_{jk} \sgn(t) \e^{-2 \pi |\bfxi| |t|}
\]
The second term in the expression \eqref{eq-FourierTransformOfP-proof-equalityI} can be computed using \eqref{Fourierformula3} and can be simplified as
\[
\begin{split}
\frac{2i \xi_j \xi_k}{\pi} \cF_{\eta} \left( \frac{-\eta}{(|\bfxi|^2+\eta^2)^2} \right)(t) =  \frac{2i \xi_j \xi_k}{\pi}  \cF_{\eta} \left( \frac{\mathrm{d}}{\mathrm{d}\eta}\left( \frac{1}{2(|\bfxi|^2+\eta^2)} \right) \right)(t) &= \frac{2i \xi_j \xi_k}{\pi} \cdot \frac{(2 \pi i t)}{2} \cF_{\eta} \left( \frac{1}{|\bfxi|^2 + \eta^2} \right)(t) \\
&=- 2 \xi_j \xi_k t \left( \frac{\pi}{|\bfxi|} \e^{-2 \pi |\bfxi| |t|} \right)\\
&=- \frac{\xi_j \xi_k}{|\bfxi|^2} (2 \pi |\bfxi| t) \e^{-2 \pi |\bfxi| |t|}\,.
\end{split}
\]
In the above calculation we have use the fact that $|\bfxi|$, $\frac{1}{|\bfxi|}$ are in $\cS'(\bbR^d)$ and that for any multi-index $\alpha$ and any number $k < |\alpha|$ the quantity $\frac{(\bfxi)^{\alpha}}{|\bfxi|^k} \e^{-|\bfxi|} \in L^1_{loc}(\bbR^d)$, hence in $\cS'(\bbR^d)$. 
Finally, plugging the last expressions into \eqref{eq-FourierTransformOfP-proof-equalityI} we obtain that for each $j,k \in \{ 1, 2, \ldots, d \}$
\begin{equation}
\cF(\mathfrak{p}_t)^{jk}(\bfxi) = \e^{-2 \pi |\bfxi| t} \delta_{jk} - (2 \pi |\bfxi'| t) \e^{-2 \pi |\bfxi| t} \, \frac{\xi_j \xi_k}{|\bfxi|^2}
\end{equation}
for every $\bfxi\in \bbR^d \setminus \{ 0 \}$ and $t > 0$.  
\end{proof}

\begin{proof}[Proof of \bf item 2)]This proof is much the same as the last one.
Let $j \in \{ 1,2,\ldots, d\}$. As before, we compute the Fourier transform of $\mathfrak{p}^{j,d+1}_t(\bx) = \frac{2(d+1)}{\omega_d} \frac{x_j t^2}{(|\bx|^2 + t^{2})^{d+3\over 2}}$ in $\cS'(\bbR^d)$. First notice again that 
\[
\mathfrak{p}^{j,d+1}_t(\bx) = {\partial \over \partial_{x_j}} \left[ {-2t^{2} \over \omega_d(|\bx|^2 + t^{2})^{d+1 \over 2}} \right]
\]
 Now we can use  \eqref{Fourierformula2} with $n = d+1$ to obtain 
\begin{equation}\label{eq-FourierTransformProofCase2-Equation1}
\begin{split}
\cF_{\bx, t}(\mathfrak{p}_t^{j,d+1})(\bfxi) = \frac{-2}{\omega_d} \cdot (2 \pi \imath \xi_j)\cF_{\bx, t} \left( \frac{t^{2}}{(|\bx|^2 + t^{2})^{d+1 \over 2}} \right) &= - \frac{4 \pi \imath \xi_j}{\omega_d} \cdot \frac{1}{4 \pi^2} \left[ \omega_d \frac{1}{|\bfxi|^2 + \eta^2} - 2 \omega_d \frac{\eta^2}{(|\bfxi|^2 + \eta^2)^{2}} \right] \\
&= -\frac{\imath}{\pi} \xi_j \left[ \frac{1}{|\bfxi|^2 + \eta^{2}} - 2 \frac{\eta^2}{(|\bfxi|^2 + \eta^2)^{2}} \right]\,.
\end{split}
\end{equation}
Again taking partial Fourier transforms we see that
$$
\cF( \mathfrak{p}_t ^{j, d+1})(\bfxi) = \cF^{-1}_{\eta} \big( \cF_{\bx,t}(\mathfrak{p}^{j,d+1})(\bfxi,\eta) \big)(t)
$$
for every $\bfxi\in \bbR^d \setminus \{ 0 \}$ and for every $t>0$. Our task is to compute the right hand side. Applying $\cF^{-1}_{\eta}$ to the right hand side of \eqref{eq-FourierTransformProofCase2-Equation1} we obtain
\begin{equation}\label{item2-inverse1}
\begin{split}
\cF^{-1}_{\eta} \left( - \frac{\imath}{\pi} \left[ \frac{\xi_j}{(|\bfxi|^2 + \eta^2)} - 2 \frac{\xi_j \eta^2}{(|\bfxi|^2+\eta^2)^2} \right] \right)  &= \frac{-\imath}{\pi} \cF^{-1}_{\eta} \left( \frac{\xi_j}{(|\bfxi|^2 + \eta^2)} \right)  + \frac{2\imath}{\pi} \cF^{-1}_{\eta} \left( \frac{\eta^2 \xi_j}{(|\bfxi|^2+\eta^2)^2} \right) \\
&= \frac{-\imath \xi_j}{\pi} \cF_{\eta} \left( \frac{1}{|\bfxi|^2+\eta^2} \right) + \frac{2i \xi_j}{\pi} \cF_{\eta} \left( \frac{\eta^2}{(|\bfxi|^2+\eta^2)^2} \right) \\
&= \frac{-\imath \xi_j}{\pi} \left( \frac{\pi}{|\bfxi|} \e^{-2 \pi |\bfxi| |t|} \right) + \frac{2\imath \xi_j}{\pi}  \frac{1}{-2 \pi \imath} \frac{\mathrm{d}}{\mathrm{d} t} \cF_{\eta} \left( \frac{\eta}{(|\bfxi|^2+\eta^2)^2} \right)\,.
\end{split}
\end{equation}
In the third equality we used the identities \eqref{Fourierformula3} and \eqref{Fourierformula4}. We also used the identity
$
\cF(xf)(\xi) = -\frac{1}{2 \pi i} \frac{\mathrm{d}}{\mathrm{d}\xi} \cF(f)(\xi)\,.
$
Now, just as in the proof of item 1) we can show that
\begin{equation} \label{item2-inverse2}
\cF_{\eta} \left( \frac{\eta}{(|\bfxi'|^2+\eta^2)^2} \right) (t) = - \pi \imath t \left( \frac{\pi}{|\bfxi'|} \e^{-2 \pi |\bfxi'| |t|} \right)\,.
\end{equation}
Now putting together \eqref{item2-inverse1} and \eqref{item2-inverse1} we obtain that 
\begin{align*}
\cF( \mathfrak{p}_t ^{j, d+1})(\bfxi)  &= -\imath \frac{\xi_j}{|\bfxi'|} \e^{-2 \pi |\bfxi| |t|} - \frac{\xi_j}{\pi^2} \frac{\mathrm{d}}{\mathrm{d}t} \left( \frac{-\pi^2 \imath t}{|\bfxi'|} \e^{-2 \pi |\bfxi| |t|} \right) \\
&= -\imath \frac{\xi_j}{|\bfxi|} \e^{-2 \pi |\bfxi| |t|} +\imath \frac{\xi_j}{|\bfxi|} \left( \e^{-2 \pi |\bfxi| |t|} - (2 \pi |\bfxi| t) \e^{-2 \pi |\bfxi| |t|} \right) \\
&= (2 \pi |\bfxi|t) \e^{-2 \pi |\bfxi| t} \, \left( -\imath \frac{\xi_j}{|\bfxi|} \right)\,.
\end{align*}

\end{proof}

\begin{proof}[Proof of {\bf item 3)}]
Writing 
$
\frac{t^3}{(|\bx|^2 + t^{2})^{d+3\over 2}} 
= \frac{t}{(|\bx|^2 + t^{2}){d+1\over 2}} - \sum_{j = 1}^d \frac{t x_j^2}{(|\bx|^2 + t^{2})^{d+3\over 2}}\,.
$
we notice that 
\begin{equation}
\mathfrak{p}^{d+1,d+1}_t(\bx) = \frac{2(d+1)}{\omega_d} \frac{t}{(|\bx|^2 + t^{2})^{d+1\over 2}} - \sum_{j=1}^d \mathfrak{p}_t^{jj}(\bx).
\end{equation}
That is, 
$\mathfrak{p}^{d+1,d+1}_t(\bx)= (d+1)p_t(\bx) -\sum_{j=1}^d \mathfrak{p}_t^{jj}(\bx)\,.$ 
Taking the Fourier Transform on both sides we get
\begin{equation}
\begin{split}
\cF_{\bx}(\mathfrak{p}^{d+1,d+1}_t) &= (d+1)\e^{-2 \pi |\bfxi| t} - \sum_{j=1}^d \left( \e^{-2\pi |\bfxi| t} - (2 \pi |\bfxi| t )\e^{-2 \pi |\bfxi| t} \frac{\xi_j^2}{|\bfxi|^2} \right) \\
&= (d+1)\e^{-2 \pi |\bfxi| t} - d \, \e^{- 2 \pi |\bfxi| t} + (2 \pi |\bfxi| t) \e^{-2 \pi |\bfxi| t} \\
&= (1 + 2 \pi |\bfxi| t) \e^{- 2 \pi |\bfxi'| t}\,, 
\end{split}
\end{equation}
as desired.

\end{proof}

\end{document}